\documentclass{article}
\usepackage{amsmath, amssymb, amsthm} 
\usepackage{mathtools} 

\usepackage{tikz}
\usetikzlibrary{arrows.meta, positioning, decorations.pathmorphing} 
\definecolor{stable}{RGB}{34,139,34}    
\definecolor{unstable}{RGB}{178,34,34}  
\definecolor{flow}{RGB}{31,119,180}     
\definecolor{spectral}{RGB}{158,154,200}
\usepackage{graphicx} 
\newcommand{\normCzero}[1]{\|#1\|_{\mathcal{C}^0}} 
\newcommand{\An}{\mathrm{An}} 
\newcommand{\TopAnFlows}{\mathcal{TA}\mathcal{F}} 
\newcommand{\FinEnAnFlows}{\mathcal{FEA}\mathcal{F}} 
\newcommand{\htop}{h_{\mathrm{top}}} 
\usepackage{tgpagella} 
\usepackage[T1]{fontenc} 

\theoremstyle{plain} 
\newtheorem{theorem}{Theorem}[section] 
\newtheorem{proposition}[theorem]{Proposition}
\newtheorem{lemma}[theorem]{Lemma}
\newtheorem{corollary}[theorem]{Corollary}
\newtheorem{conjecture}{Conjecture} 

\theoremstyle{definition} 
\newtheorem{definition}[theorem]{Definition} 
\newtheorem{remark}[theorem]{Remark}     

\theoremstyle{plain} 

\newcommand{\R}{\mathbb{R}}

\newcommand{\M}{M} 
\newcommand{\TM}{TM} 
\newcommand{\Cr}[1]{\mathcal{C}^{#1}} 
\newcommand{\Ck}[1]{\Cr{#1}} 
\newcommand{\Czero}{\Cr{0}} 

\newcommand{\Ank}[1]{\An^{#1}} 
\newcommand{\varphiup}{\varphi} 
\newcommand{\Diff}{\mathrm{Diff}} 
\newcommand{\ModuliSpace}{\mathcal{M}} 
\newcommand{\ProbMeas}{\mathcal{P}} 
\newcommand{\TopAnFlowsExplicit}{\mathcal{F}^{\mathrm{An}, \mathrm{top}}} 

\newcommand{\norm}[1]{\left\| #1 \right\|} 
\newcommand{\abs}[1]{\left| #1 \right|} 
\newcommand{\gmetric}{g} 
\newcommand{\normg}[1]{\abs{#1}_{\gmetric}} 

\newcommand{\normCk}[2]{\norm{#1}_{\Ck{#2}}} 
\newcommand{\SobolevHk}[1]{H^{#1}} 
\newcommand{\normSobolevHk}[2]{\norm{#1}_{\SobolevHk{#2}}} 
\newcommand{\dAn}{d_{\An}} 
\newcommand{\dAnV}[1]{d_{\An, #1}} 

\DeclareMathOperator*{\esssup}{ess\,sup} 
\DeclareMathOperator{\Realpart}{Re} 
\DeclareMathOperator{\Id}{I} 
\DeclareMathOperator{\grad}{\nabla} 

\newcommand{\Length}{\mathrm{Length}} 
\newcommand{\diff}{\mathrm{d}} 
\newcommand{\vol}{\mathrm{d}\mu_{\gmetric}} 

\newcommand{\zetafun}{\zeta} 
\newcommand{\TransferOp}{\mathcal{L}} 
\newcommand{\Spectrum}{\sigma} 

\usepackage[margin=1in]{geometry} 
\usepackage{enumitem} 
\usepackage[english]{babel} 

\usepackage[colorlinks=true, linkcolor=blue, citecolor=red, urlcolor=magenta]{hyperref} 

\title{A Hofer-like Metric on the Space of Anosov Flows}
\author{St\'ephane Tchuiaga \\ 
	\small Department of Mathematics, University of Buea, \\ 
	\small P.O. Box 63, Buea, South West Region, Cameroon \\ 
	\small \texttt{tchuiagas@gmail.com}} 
\date{} 

\begin{document} 
	
	\maketitle
	
	\begin{abstract}
		 This paper develops a family of Hofer-like metrics ($\dAnV{V}$) on the space of Anosov vector fields $\An(M)$, providing dynamically relevant distances based on the cost of deformation paths using $\Ck{k}$ or Sobolev $\SobolevHk{k}$ norms. We establish fundamental properties, including completeness for $V=\Ck{r} (r \ge 1)$ or $H^k (k > \dim(M)/2+1)$, and naturality under diffeomorphisms. We show the utility of these metrics by proving quantitative stability results: proximity in $\dAnV{V}$ implies controlled variation of essential dynamical invariants, including topological entropy, Lyapunov exponents, SRB measures, thermodynamic pressure, spectral gaps (mixing rates), and zeta functions. Sufficient regularity ensures local Lipschitz continuity and Fréchet differentiability, connecting the metric structure to linear response formulas, particularly for pressure, exponents, and the spectral gap. While Sobolev metrics yield locally flat geometry with straight-line geodesics, the framework is broadly applicable. We explore implications for the moduli space of Anosov flows, including stability of invariants and the framework for local slice theorems. Furthermore, we introduce \emph{Topological Anosov Flows}, defined via simultaneous uniform flow convergence and the $\dAn$-Cauchy condition on generators, alongside the related class of \emph{Finite Energy Anosov Flows} requiring only $\dAn$-bounded generators. These new classes aim to capture essential hyperbolic features in non-smooth settings. Overall, the proposed metrics offer several geometric perspectives for analyzing the stability, classification, and possible extensions of Anosov dynamics.
		
	\end{abstract}
	
	\vspace{0.5cm} 
	\noindent\textbf{Keywords:} Anosov flows, Hofer metric, Finsler geometry, infinite-dimensional manifolds, dynamical systems, stability, topological entropy, Lyapunov exponents, SRB measures, topological pressure, thermodynamic formalism, spectral gap, Ruelle zeta function, metric geometry, moduli space, topological Anosov flows.
	
	\vspace{0.3cm} 
	\noindent\textbf{MSC 2020 Classification:} 
	\textbf{Primary:} 37D20, 58B20, 53C60.
	\textbf{Secondary:} 37C15, 53Dxx, 37A25.
	\vspace{0.5cm} 
	
	\section{Introduction}
	\label{sec:introduction}
	
	Anosov flows stand as paradigms of robustly chaotic dynamical systems, characterized by the uniform splitting of the tangent bundle into contracting, expanding, and neutral directions along flow lines \cite{Anosov67}. The set $\An(M)$ of all $C^r$ ($r \ge 1$) Anosov vector fields on a given compact manifold $M$ forms an open subset within the Banach space $C^r(TM)$, inheriting the structure of an infinite-dimensional manifold \cite{PalisMelo82}. While the standard $C^r$ topology provides a notion of proximity, it primarily measures the difference between vector fields at the endpoints of a deformation, rather than the intrinsic "effort" required to continuously deform one Anosov system into another while maintaining the Anosov property throughout.
	
	In the realm of symplectic geometry, the Hofer type metrics furnish powerful intrinsic distances on the identity component of the group of symplectomorphisms \cite{Banyaga2010, Hofer90, Polterovich01}. It measures the minimal total $L^\infty$-variation of the generators along paths, revealing connections to symplectic capacities and rigidity phenomena. Motivated by this successful framework, we propose and investigate an analogous family of metrics on the space $\An(M)$.
	
	We consider smooth paths $X_s: [0,1] \to \An(M)$ connecting two Anosov vector fields $X_0$ and $X_1$. The instantaneous change along such a path is given by the velocity vector field $Y_s = dX_s/ds$, which resides in the tangent space $T_{X_s}\An(M) \approx C^r(TM)$ (or a suitable subspace thereof). Choosing a norm $\|\cdot\|_V$ on this tangent space (typically the $C^k$ norm for $k \ge 0$ or a Sobolev $H^k$ norm), we define the length of the path as $L_V(X_s) = \int_0^1 \|Y_s\|_V ds$. The corresponding Hofer-like distance is then defined as the infimum of these lengths over all admissible paths:
	\[ \dAnV{V}(X_0, X_1) = \inf \{ L_V(X_s) \mid X_s \in C^1([0,1], \An(M)), X(0)=X_0, X(1)=X_1 \}. \]
	The simplest case uses the $C^0$ norm, denoted $\dAn = \dAnV{C^0}$. This construction yields a family of genuine metrics on the path components of $\An(M)$. A crucial observation, derived directly from the definition, is the inequality
	\begin{equation}\label{eq:norm_vs_dist_V_intro_sec1} 
		\|X_1 - X_0\|_V \leq \dAnV{V}(X_0, X_1).
	\end{equation}
	This ensures that convergence in the Hofer-like metric $\dAnV{V}$ implies convergence in the standard $\|\cdot\|_V$ norm topology, providing a bridge to influence the wealth of existing stability results in dynamical systems theory which rely on standard $C^k$ or $H^k$ convergence. Metrics based on stronger norms $V$ naturally induce finer topologies on $\An(M)$.
	
	The primary contributions of this paper are: (1) establishing fundamental metric properties like completeness (Proposition \ref{prop:completeness_Cr_Hk}) and naturality under diffeomorphisms (Proposition \ref{prop:diffeo_isometry}); (2) demonstrating the dynamical significance of these metrics by establishing quantitative relationships between distance in $(\An(M), \dAnV{V})$ and the variation of key dynamical invariants; (3) exploring the implications for the moduli space of Anosov flows; and (4) introducing the concept of Topological Anosov Flows. We prove a series of stability results, summarized as follows:
	\begin{itemize}
		\item A basic $C^0$-rigidity property holds (Proposition \ref{prop:c0_rigidity}).
		\item Continuity results: Topological entropy and Lyapunov exponents are continuous with respect to $\dAnV{\Ck{1}}$ (Theorem \ref{thm:entropy_continuity}, Proposition \ref{prop:lyapunov_stability}). SRB measures (weak*), measure-theoretic entropy, topological pressure, and Ruelle zeta functions are continuous with respect to $\dAnV{\Ck{k}}$ for $k \ge 2$ (Theorems \ref{thm:srb_pressure_continuity}, \ref{thm:periodic_zeta}).
		\item Local Geodesics: For the Hilbert-Riemannian metric $\dAnV{H^k}$ ($k$ large), minimizing geodesics exist locally and are straight lines in $H^k(TM)$ (Theorem \ref{thm:geodesic_existence_Hk}).
		\item Quantitative Stability (Lipschitz): Under sufficient regularity ($k \ge 2$), topological pressure, spectral gap (mixing rate), and SRB dimension exhibit local Lipschitz continuity with respect to $\dAnV{\Ck{k}}$ (Theorems \ref{thm:srb_pressure_continuity}, \ref{thm:mixing_dimension_stability}). These Lipschitz bounds extend to the moduli space for conjugacy invariants (Theorem \ref{thm:moduli_stability}). Remarks suggest similar bounds hold for entropy and exponents.
		\item Differentiability: Pressure, Lyapunov exponents, and spectral gap are Fréchet differentiable functions on $\Ank{k}(M)$ ($k$ sufficiently large) equipped with its natural Hilbert/Banach manifold structure (compatible with $\dAnV{H^k}$/$\dAnV{C^k}$), with derivatives given by linear response formulas (Theorems \ref{thm:pressure_diff}, \ref{thm:lyapunov_diff}, \ref{thm:spectral_gap_diff}).
	\end{itemize}
	These findings underscore that the $\dAnV{V}$ metrics provide a dynamically relevant way to measure distances within the space of Anosov flows, quantitatively controlling the variation of statistical, spectral, and topological invariants. They pave the way for applying geometric and variational techniques to study deformations of chaotic systems.
	
	The paper is organized as follows. Section \ref{sec:setting} introduces Anosov flows and $\An(M)$. Section \ref{sec:analogy} recalls the Hofer metric. Section \ref{sec:construction} defines the basic metric $\dAn$. Section \ref{sec:properties} discusses fundamental metric properties, including completeness and invariance under diffeomorphisms. Section \ref{sec:discussion} introduces the family $\dAnV{V}$ and derives the crucial norm comparison inequalities. Section \ref{sec:dynamical_significance} presents the main stability and differentiability results connecting the metrics to dynamical invariants, including discussion of the moduli space. Section \ref{sec:topological_anosov_flows} introduces Topological Anosov Flows. Section \ref{sec:conclusion} offers concluding remarks and outlines possible future directions.
	
	\section{Basic notions}\label{sec:setting}
	
	Let $\M$ be a compact, smooth manifold without boundary. Let $\Cr{r}(\TM)$ denote the Banach space of $\Cr{r}$ vector fields on $\M$, where $r \geq 1$. We are interested in the subset $\An(\M) \subset \Cr{r}(\TM)$ consisting of vector fields $X$ whose generated flows $\varphiup^t_X$ are Anosov flows.
	
	\begin{definition}[Anosov Flow]
		\label{def:anosov}
		A flow $\varphiup^t$ on $\M$, generated by a vector field $X \in \Cr{r}(\TM)$, is \emph{Anosov} if there exists a continuous, $\diff\varphiup^t$-invariant splitting of the tangent bundle $\TM = E^s \oplus E^u \oplus E^c$, where $E^c = \R X$ is the 1-dimensional bundle tangent to the flow direction, and there exist constants $C > 0$, $\lambda > 0$, and an auxiliary Riemannian metric $\gmetric$ on $\M$ such that for all $t > 0$:
		\begin{itemize}
			\item $\norm{\diff\varphiup^t(v)}_{\gmetric} \leq C e^{-\lambda t} \norm{v}_{\gmetric}$ for all $v \in E^s$.
			\item $\norm{\diff\varphiup^{-t}(v)}_{\gmetric} \leq C e^{-\lambda t} \norm{v}_{\gmetric}$ for all $v \in E^u$.
		\end{itemize}
		Here $\norm{\cdot}_{\gmetric}$ denotes the norm on tangent vectors induced by $\gmetric$.
	\end{definition}
	
	A fundamental result (structural stability) states that $\An(\M)$ is an \emph{open} subset of $\Cr{r}(\TM)$ for $r \geq 1$ \cite{Anosov67, PalisMelo82}. As an open set in a Banach space, $\An(\M)$ is an infinite-dimensional manifold.
	
	\section{Analogy with the Hofer Metric}\label{sec:analogy}
	
	The Hofer metric, introduced in \cite{Hofer90}, is defined on the group $\mathrm{Ham}(M, \omega)$ of Hamiltonian diffeomorphisms of a symplectic manifold $(M, \omega)$. A path $\phi_t \in \mathrm{Ham}(M, \omega)$ ($t \in [0, 1]$) starting at the identity is generated by a time-dependent Hamiltonian function $H_t$. The length of the path is given by
	\[ \Length(\phi_t) = \int_0^1 \norm{H_t}_{\text{Hofer}} \diff t ,\]
	where $\norm{H_t}_{\text{Hofer}} = \max H_t - \min H_t$ (or sometimes $\norm{H_t}_{L^\infty}$). The Hofer distance $d_H(\phi_0, \phi_1)$ is the infimum of path lengths over all Hamiltonian paths connecting $\phi_0$ to $\phi_1$ \cite{Hofer90, Polterovich01}. Our construction aims to mimic this structure for Anosov vector fields.
	
	\section{Construction of the Metric on $\An(\M)$}\label{sec:construction}
	
	We define a metric on $\An(\M)$ by defining the length of paths within this space. Fix an auxiliary Riemannian metric $\gmetric$ on $\M$.
	
	\begin{definition}[Paths in $\An(\M)$]
		An admissible path connecting $X_0, X_1 \in \An(\M)$ is a $\Cr{1}$ (or piecewise $\Cr{1}$) map $X: [0, 1] \to \An(\M)$, denoted $X_s$, such that $X(0) = X_0$ and $X(1) = X_1$.
	\end{definition}
	
	Since $\An(\M)$ is open, such paths exist between any two points in the same path component.
	
	\begin{definition}[Velocity and $\Czero$ Norm]
		The velocity of the path $X_s$ at parameter $s$ is the vector field $Y_s = \frac{\diff X_s}{\diff s} \in \Cr{r}(\TM)$ (viewed as a tangent vector to the path in $\Cr{r}(\TM)$). We equip the space of velocity vectors with the $\Czero$ norm (supremum norm) induced by the metric $\gmetric$:
		\[ \normCzero{Y_s} \coloneqq \sup_{x \in \M} \normg{Y_s(x)} = \sup_{x \in \M} \sqrt{\gmetric_x(Y_s(x), Y_s(x))}.\]
	\end{definition}
	
	\begin{definition}[Length of a Path]
		The length of a $\Cr{1}$ path $X_s$ in $\An(\M)$ with respect to the $\Czero$ norm is defined as the integral of the $\Czero$ norm of its velocity:
		\[ \Length_{\Czero}(X_s) \coloneqq \int_0^1 \normCzero{\frac{\diff X_s}{\diff s}} \diff s .\]
	\end{definition}
	
	\begin{definition}[Hofer-like Metric $\dAn$]
		Let $X_0, X_1$ be two Anosov vector fields in the same path component of $\An(\M)$. The Hofer-like distance $\dAn(X_0, X_1)$, based on the $\Czero$ norm, is defined as the infimum of the lengths of all admissible ($\Cr{1}$) paths connecting $X_0$ to $X_1$:
		\[ \dAn(X_0, X_1) \coloneqq \inf \left\{ \int_0^1 \normCzero{\frac{\diff X_s}{\diff s}} \diff s \;\middle|\; X_s \in \Cr{1}([0,1], \An(\M)), X(0) = X_0, X(1) = X_1 \right\}. \]
		If $X_0$ and $X_1$ are in different path components, we set $\dAn(X_0, X_1) = \infty$.
	\end{definition}
	
	\section{Properties}\label{sec:properties}
	
	\begin{itemize}
		\item \textbf{Well-defined:} The infimum exists since lengths are non-negative.
		\item \textbf{Metric Axioms:} $\dAn$ defines a metric on each path component of $\An(\M)$.
		\begin{itemize}
			\item $\dAn(X_0, X_1) \geq 0$, and $\dAn(X_0, X_0) = 0$ (via the constant path).
			\item $\dAn(X_0, X_1) = 0 \implies X_0 = X_1$. This follows from the inequality
			\begin{equation}\label{eq:norm_vs_dist_C0_prop_sec4} 
				\normCzero{X_1 - X_0} \leq \dAn(X_0, X_1),
			\end{equation}
			which can be derived from $\normCzero{X_1 - X_0} = \normCzero{\int_0^1 \frac{\diff X_s}{\diff s} \diff s} \leq \int_0^1 \normCzero{\frac{\diff X_s}{\diff s}} \diff s$.
			\item Symmetry: $\dAn(X_0, X_1) = \dAn(X_1, X_0)$ by path reversal ($X'_s = X_{1-s}$).
			\item Triangle Inequality: $\dAn(X_0, X_2) \leq \dAn(X_0, X_1) + \dAn(X_1, X_2)$ by path concatenation.
		\end{itemize}
		\item \textbf{Finsler Structure:} The metric $\dAn$ is the integrated distance associated with a Finsler structure on the infinite-dimensional manifold $\An(\M)$. The Finsler norm at a point $X \in \An(\M)$ applied to a tangent vector $Y \in T_X \An(\M) \approx \Cr{r}(\TM)$ is given by $F_X(Y) = \normCzero{Y}$.
		\item \textbf{Dependence on $\gmetric$:} While the specific value of $\dAn(X_0, X_1)$ depends on the choice of the auxiliary Riemannian metric $\gmetric$, the induced topology is independent of this choice since $\M$ is compact (all Riemannian metrics on a compact manifold induce equivalent $\Czero$ norms on vector fields).
	\end{itemize}
	
	\begin{remark}[Non-Degeneracy in Specific Settings]
		While the general non-degeneracy property $\dAn(X_0, X_1) = 0 \implies X_0 = X_1,$ follows readily from inequality \eqref{eq:norm_vs_dist_C0_prop_sec4}, it is noteworthy that deeper results can sometimes be obtained in specific geometric contexts using specialized techniques. For instance, a recent preprint by Smith \& Zhang \cite{SmithZhang23} reportedly establishes the non-degeneracy of $\dAn$ for contact Anosov flows on 3-manifolds, highlighting unique properties related to harmonic structures associated with contact forms. Such results provide valuable confirmation of the metric properties in important subclasses of Anosov flows and highlight thepossible for richer interactions between the metric structure and the underlying geometry.
	\end{remark}
	
	\subsection{Invariance Properties of the Hofer-like Metric}
	
	We investigate how the Hofer-like metrics $\dAnV{V}$ behave under coordinate changes on the underlying manifold $M$. Naturally, the value of the metric depends on the choice of auxiliary Riemannian metric $g$ used to define the norm $\|\cdot\|_{V,g}$ on vector fields. The following proposition shows that diffeomorphisms of $M$ act as isometries between the spaces $(\An(M), \dAnV{V,g})$ equipped with appropriately transformed metrics.
	
	\begin{proposition}[Diffeomorphism Pushforward Induces Isometry Between Metric Spaces]
		\label{prop:diffeo_isometry}
		Let $M$ be a compact smooth manifold. For any auxiliary Riemannian metric $g$ on $M$, let $f: M \to M$ be a smooth ($C^k$, $k \ge r+1$) diffeomorphism, and let $f^*g$ denote the pullback metric. Let $V$ represent a type of norm on $C^r(TM)$ (such as $C^l$ for $0 \le l \le r$, or Sobolev $H^l$ for $l \le r$) whose definition depends naturally on the Riemannian metric, denoted $\|\cdot\|_{V,g}$ or $\|\cdot\|_{V, f^*g}$. Assume this dependence is such that the pushforward map $f_*: C^r(TM) \to C^r(TM)$, defined by
		\[
		(f_*Y)(y) = Df_{f^{-1}(y)}\bigl(Y(f^{-1}(y))\bigr),
		\]
		satisfies the isometry property relating the norms based on $g$ and $f^*g$:
		\begin{equation} \label{eq:norm_isometry_property_prop_sec4} 
			\| f_* Y \|_{V, f^*g} = \| Y \|_{V, g} \quad \text{for all } Y \in C^r(TM).
		\end{equation}
		Let $\dAnV{V,g}$ and $\dAnV{V, f^*g}$ be the Hofer-like metrics on $\An(M)$ defined using the norms $\|\cdot\|_{V,g}$ and $\|\cdot\|_{V, f^*g}$, respectively. Then, for any $X, Z \in \An(M)$, the pushforward $f_*$ induces an isometry:
		\[
		\dAnV{V, f^*g}(f_*X, f_*Z) = \dAnV{V, g}(X, Z).
		\]
	\end{proposition}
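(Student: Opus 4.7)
The plan is to establish the isometry in three steps: verify that $f_*$ is a bijection of $\An(\M)$ onto itself that sends admissible $\Cr{1}$ paths to admissible $\Cr{1}$ paths; show that path length is preserved under pushforward using the hypothesized norm isometry; and pass to the infimum.

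First, I would observe that the flow generated by $f_* X$ is the smooth conjugate $f \circ \varphi_X^t \circ f^{-1}$. Since $f$ is a $\Cr{r+1}$ diffeomorphism of the compact manifold $\M$, the Anosov splitting $\TM = E^s \oplus E^u \oplus E^c$ for $X$ pushes forward under $Df$ to a continuous, $d(f \circ \varphi_X^t \circ f^{-1})$-invariant splitting for $f_*X$. The hyperbolicity estimates transfer to this new splitting when measured with respect to the pullback metric $f^*\gmetric$ (equivalently, with any Riemannian metric on $\M$, since all such metrics are equivalent on a compact manifold and hyperbolicity is an intrinsic property invariant under smooth conjugation). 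Consequently $f_*(\An(\M)) = \An(\M)$, and because $f_*$ is a continuous linear operator on $\Cr{r}(\TM)$ with continuous linear inverse $(f^{-1})_*$, it induces a bijection at the level of admissible $\Cr{1}$ paths, mapping paths from $X$ to $Z$ bijectively onto paths from $f_*X$ to $f_*Z$.

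Next, given an admissible path $X_s$ from $X$ to $Z$ with velocity $Y_s = \diff X_s/\diff s$, the linearity and continuity of $f_*$ give $\diff(f_* X_s)/\diff s = f_* Y_s$. Applying the hypothesized norm isometry \eqref{eq:norm_isometry_property_prop_sec4} pointwise in $s$ yields $\|f_* Y_s\|_{V, f^*\gmetric} = \|Y_s\|_{V, \gmetric}$, and integrating over $[0,1]$ shows that the length of $f_*X_s$ measured with $\|\cdot\|_{V, f^*\gmetric}$ equals the length of $X_s$ measured with $\|\cdot\|_{V, \gmetric}$. Taking the infimum over all admissible paths and using the bijection from the first step gives the inequality in both directions, hence $\dAnV{V, f^*\gmetric}(f_*X, f_*Z) = \dAnV{V, \gmetric}(X, Z)$.

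The main point requiring care is the first step: verifying in detail that pushforward under $f$ genuinely preserves the full Anosov condition, so that the bijection of admissible path spaces is actually valid. Once this is established, the remainder of the argument is a direct consequence of the assumed pointwise norm isometry and the linearity of $f_*$, and no further dynamical input is needed.
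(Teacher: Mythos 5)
Your proposal is correct and follows essentially the same route as the paper's proof: pushforward gives a length-preserving bijection between admissible paths (via linearity of $f_*$, the assumed norm isometry \eqref{eq:norm_isometry_property_prop_sec4}, and commuting $f_*$ with $d/ds$), and the infimum is then preserved. The only difference is that you spell out in more detail why $f_*$ preserves the Anosov property (conjugation of flows and pushing forward the hyperbolic splitting), a point the paper simply asserts as invariance under smooth coordinate changes.
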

	
	\begin{proof}
		The proof relies on demonstrating that the pushforward map $f_*$ establishes a length-preserving bijection between admissible paths connecting $X$ and $Z$ (measured using $\|\cdot\|_{V,g}$) and admissible paths connecting $f_*X$ and $f_*Z$ (measured using $\|\cdot\|_{V, f^*g}$).
		
		\textbf{1. Pushforward of Paths:} Let $X_s: [0, 1] \to \An(M)$ be any admissible $C^1$ path such that $X(0) = X$ and $X(1) = Z$. Define the pushed-forward path $X'_s: [0, 1] \to C^r(TM)$ by $X'_s = f_*X_s$.
		Since $f: M \to M$ is a diffeomorphism and $X_s$ takes values in $\An(M)$, the pushforward $X'_s = f_*X_s$ also takes values in $\An(M)$ (as the Anosov property is invariant under smooth coordinate changes). Furthermore, since $f_*: C^r(TM) \to C^r(TM)$ is a continuous linear map (for $f \in C^{r+1}$) and $X_s$ is $C^1$ in $s$, the path $X'_s$ is also $C^1$ in $s$. The endpoints are $X'_0 = f_*X_0 = f_*X$ and $X'_1 = f_*X_1 = f_*Z$. Thus, $X'_s$ is an admissible $C^1$ path in $\An(M)$ connecting $f_*X$ to $f_*Z$.
		Conversely, applying $(f^{-1})_*$ shows that any path from $f_*X$ to $f_*Z$ is the pushforward of a path from $X$ to $Z$. Hence, $f_*$ establishes a bijection between these sets of paths.
		
		\textbf{2. Velocity Transformation:} Let $Y_s = \frac{\diff X_s}{\diff s}$ be the velocity vector field of the path $X_s$. The velocity of the pushed-forward path $X'_s = f_*X_s$ is given by $Y'_s = \frac{\diff X'_s}{\diff s}$. Since $f_*$ is a continuous linear map, it commutes with differentiation with respect to the parameter $s$:
		\[
		Y'_s = \frac{\diff}{\diff s}(f_*X_s) = f_* \left( \frac{\diff X_s}{\diff s} \right) = f_*Y_s.
		\]
		
		\textbf{3. Length Preservation:} We compute the length of the path $X'_s$ using the norm $\|\cdot\|_{V, f^*g}$ associated with the target metric space:
		\[
		L_{V, f^*g}(X'_s) = \int_0^1 \| Y'_s \|_{V, f^*g} \, \diff s = \int_0^1 \| f_*Y_s \|_{V, f^*g} \, \diff s.
		\]
		Using the assumed isometry property \eqref{eq:norm_isometry_property_prop_sec4}, we have $\| f_*Y_s \|_{V, f^*g} = \| Y_s \|_{V, g}$. Substituting this into the integral gives:
		\[
		L_{V, f^*g}(X'_s) = \int_0^1 \| Y_s \|_{V, g} \, \diff s.
		\]
		The right-hand side is precisely the length $L_{V, g}(X_s)$ of the original path $X_s$, measured using the norm $\|\cdot\|_{V,g}$. Thus, we have shown that $L_{V, f^*g}(f_*X_s) = L_{V, g}(X_s)$.
		
		\textbf{4. Infimum Preservation:} We have established a bijection $X_s \mapsto f_*X_s$ between the set of admissible paths from $X$ to $Z$ and the set of admissible paths from $f_*X$ to $f_*Z$. This bijection preserves the lengths when measured with respect to the corresponding norms: $L_{V, f^*g}(f_*X_s) = L_{V, g}(X_s)$. Therefore, the set of possible path lengths for paths $X \to Z$ (using $\|\cdot\|_{V,g}$) is identical to the set of possible path lengths for paths $f_*X \to f_*Z$ (using $\|\cdot\|_{V, f^*g}$). Taking the infimum over these identical sets of values yields:
		\[
		\inf \{ L_{V, f^*g}(X'_s) \mid X'_s: f_*X \to f_*Z \} = \inf \{ L_{V, g}(X_s) \mid X_s: X \to Z \}.
		\]
		By definition of the Hofer-like metrics, this is exactly:
		\[
		\dAnV{V, f^*g}(f_*X, f_*Z) = \dAnV{V, g}(X, Z).
		\]
	\end{proof}
	
	\begin{remark}
		This proposition highlights that while the specific value of $\dAnV{V,g}(X, Z)$ depends on the choice of $g$, the underlying geometric structure is natural with respect to diffeomorphisms. The map $f_*$ provides an isometry relating the geometry defined by $g$ to the geometry defined by $f^*g$. Standard norms like $C^l$ and $H^l$ (defined using the metric $g$) satisfy the crucial property \eqref{eq:norm_isometry_property_prop_sec4}.
	\end{remark}
	
	\begin{corollary}[Invariance under Isometries]
		If the diffeomorphism $f: M \to M$ is an isometry of the auxiliary metric $g$ (i.e., $f^*g = g$), then the pushforward $f_*$ is an isometry of the metric space $(\An(M), \dAnV{V,g})$:
		\[
		\dAnV{V, g}(f_*X, f_*Z) = \dAnV{V, g}(X, Z).
		\]
	\end{corollary}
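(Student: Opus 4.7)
The plan is to deduce this corollary immediately from Proposition \ref{prop:diffeo_isometry} by specializing to the hypothesis $f^*g = g$. First, I would verify that the norm-isometry hypothesis \eqref{eq:norm_isometry_property_prop_sec4} of the proposition is automatic in this setting for the standard $C^l$ and $H^l$ norms. These norms are built from pointwise norms of $Y$ (and, in the Sobolev case, of its covariant derivatives with respect to the Levi-Civita connection of $g$), aggregated via a supremum or an integral against the Riemannian volume form of $g$. An isometry $f$ of $g$ preserves the Levi-Civita connection, the pointwise fiber norms, and the volume form; combined with the pointwise identity $|f_*Y|_{f^*g}(y) = |Y|_g(f^{-1}(y))$, this yields $\|f_*Y\|_{V, f^*g} = \|Y\|_{V, g}$ after a change of variables. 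Thus Proposition \ref{prop:diffeo_isometry} applies.

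Next, I would apply that proposition to obtain
\[
\dAnV{V, f^*g}(f_*X, f_*Z) = \dAnV{V, g}(X, Z).
\]
Under the hypothesis $f^*g = g$, the norms $\|\cdot\|_{V, f^*g}$ and $\|\cdot\|_{V,g}$ are literally the same functional on $C^r(TM)$, so the metrics $\dAnV{V, f^*g}$ and $\dAnV{V, g}$ coincide as functions on $\An(M) \times \An(M)$. Substituting on the left-hand side then gives the desired identity $\dAnV{V, g}(f_*X, f_*Z) = \dAnV{V, g}(X, Z)$.

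There is no substantive obstacle here: the corollary records exactly the specialization of the preceding proposition to isometries of $g$, and together with the fact that $f_*$ is a bijective self-map of $\An(M)$ (with inverse $(f^{-1})_*$), this shows $f_*$ is an isometry of the metric space $(\An(M), \dAnV{V, g})$ in the usual sense. The only point worth flagging explicitly is the verification that $f^*g = g$ implies $\|\cdot\|_{V, f^*g} = \|\cdot\|_{V, g}$ at the level of the defining norms, which is immediate but makes the reduction to Proposition \ref{prop:diffeo_isometry} fully rigorous.
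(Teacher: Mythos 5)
Your proposal is correct and follows essentially the same route as the paper: the paper's proof likewise observes that $f^*g = g$ makes $\|\cdot\|_{V, f^*g}$ identical to $\|\cdot\|_{V,g}$ and then invokes Proposition~\ref{prop:diffeo_isometry} directly. Your extra verification that standard $C^l$ and $H^l$ norms satisfy the isometry hypothesis \eqref{eq:norm_isometry_property_prop_sec4} is a harmless elaboration of what the paper assumes and remarks upon.
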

	\begin{proof}
		If $f^*g = g$, then the norm $\|\cdot\|_{V, f^*g}$ is identical to $\|\cdot\|_{V, g}$. Proposition~\ref{prop:diffeo_isometry} then directly yields the result.
	\end{proof}
	
	\subsection{Completeness}
	
	\begin{proposition}[Completeness in the $C^r$ and Sobolev $H^k$ Settings]
		\label{prop:completeness_Cr_Hk}
		Let $M$ be a compact smooth manifold.
		
		\begin{enumerate}[label=(\arabic*)] 
			\item \textbf{($C^r$ Setting):} Let $r \ge 1$. Define $\Ank{r}(M) = \{ X \in C^r(TM) : X \text{ generates an Anosov flow} \}$, which is an open subset of the Banach space $C^r(TM)$. Equip $\Ank{r}(M)$ with the Hofer-like metric
			\[
			\dAnV{C^r}(X, Y) = \inf\left\{ \int_0^1 \| \dot{X}_s \|_{C^r}\, ds \,\middle|\, X_s \in C^1([0,1], \Ank{r}(M)),\; X_0=X,\; X_1=Y \right\}.
			\]
			Then, each path component of the metric space $(\Ank{r}(M), \dAnV{C^r})$ is complete.
			
			\item \textbf{(Sobolev $H^k$ Setting):} Let $k > \dim(M)/2 + 1$. Consider the Hilbert space $H^k(TM)$ and define $\Ank{k}(M) = \An(M) \cap H^k(TM)$. By Sobolev embedding $H^k(TM) \hookrightarrow C^1(TM)$ and the openness of $\An(M)$ in the $C^1$ topology, $\Ank{k}(M)$ is an open subset of $H^k(TM)$. Equip $\Ank{k}(M)$ with the Hofer-like metric
			\[
			\dAnV{H^k}(X, Y) = \inf\left\{ \int_0^1 \| \dot{X}_s \|_{H^k}\, ds \,\middle|\, X_s \in C^1([0,1], \Ank{k}(M)),\; X_0=X,\; X_1=Y \right\}.
			\]
			Then, each path component of the metric space $(\Ank{k}(M), \dAnV{H^k})$ is complete.
		\end{enumerate}
	\end{proposition}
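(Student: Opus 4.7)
The plan is to handle the two cases uniformly: let $V \in \{C^r(TM), H^k(TM)\}$, both of which are complete (Banach, resp.\ Hilbert) and in which $\An(M)$ is open --- by structural stability for $C^r$, and by structural stability together with the Sobolev embedding $H^k \hookrightarrow C^1$ for $H^k$ (the hypothesis $k > \dim(M)/2 + 1$ precisely ensures this embedding).

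Starting from a Cauchy sequence $\{X_n\}$ in a single path component, the norm comparison
\[
\|X_m - X_n\|_V \;\leq\; \int_0^1 \|\dot X_s\|_V\,ds,
\]
applied to any admissible path $X_s$ and then taking infima, yields $\|X_m - X_n\|_V \leq \dAnV{V}(X_n, X_m)$. Hence $\{X_n\}$ is $V$-Cauchy, and completeness of $V$ produces a norm-limit $X_\infty \in V$. The crux is then to show $X_\infty \in \An(M)$; granted this, openness gives $r > 0$ with $B_V(X_\infty, r) \subset \An(M)$, so for $n$ large the straight-line path $X_s = (1-s)X_n + s X_\infty$ lies in this convex ball, is $C^1$ in $s$, and has $V$-length $\|X_n - X_\infty\|_V \to 0$, delivering $\dAnV{V}(X_n, X_\infty) \to 0$.

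To establish $X_\infty \in \An(M)$, I would use the boundary-distance $\delta(X) := \dist_V(X, V \setminus \An(M))$, which is strictly positive on $\An(M)$ and $1$-Lipschitz on $V$. The Lipschitz estimate gives the useful bootstrap: on the closed ball $\overline{B_V(X_N, \delta(X_N)/2)}$ one has $\delta \geq \delta(X_N)/2 > 0$, so the entire closed ball sits inside $\An(M)$. The task then reduces to locating, via the Hofer-Cauchy hypothesis, an anchor $X_N$ for which $\delta(X_N)/2$ exceeds the tail diameter of $\{X_m\}_{m \ge N}$ in the $V$-norm; once the tail is confined to $\overline{B_V(X_N, \delta(X_N)/2)} \subset \An(M)$, its norm-limit $X_\infty$ lies in this closed subset of $\An(M)$.

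I expect this last point to be the main obstacle: norm-Cauchyness alone does not prevent a sequence in an open subset of a Banach space from converging to a boundary point (as the toy example $(0,1) \subset \R$ shows), so the argument must genuinely exploit the Hofer-Cauchy hypothesis and not merely its consequence for the $V$-norm. The delicate part is the joint choice of anchor and Cauchy threshold, because $\delta(X_n)$ could \emph{a priori} shrink along the sequence. A natural remedy is a two-step bootstrap: starting from any $N_0$ with $\delta_0 := \delta(X_{N_0}) > 0$, apply Cauchy to find $N \geq N_0$ with tail Hofer-diameter under $\delta_0/4$; the Lipschitz property of $\delta$ then gives $\delta(X_N) \geq \delta_0/2$, and the tail is trapped in a closed ball of radius $\delta(X_N)/2$ inside $\An(M)$. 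The $H^k$ case is then essentially identical, provided one verifies that $\delta$ retains its $1$-Lipschitz property with respect to $\|\cdot\|_{H^k}$ and that $\An(M) \cap H^k(TM)$ is open in the Sobolev topology via the embedding $H^k \hookrightarrow C^1$.
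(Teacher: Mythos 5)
Your Steps 1 and 3 coincide with the paper's proof: the inequality $\|X_m-X_n\|_V\le \dAnV{V}(X_n,X_m)$ converts Hofer--Cauchy into norm--Cauchy, and once a limit $X_\infty$ inside the open set is available, short straight-line segments upgrade norm convergence to $\dAnV{V}$-convergence. The divergence is in the middle step, and there your remedy does not work. First, the bookkeeping is circular: to get $\delta(X_N)\ge\delta_0/2$ you need $\|X_N-X_{N_0}\|_V\le\delta_0/2$, but the Cauchy threshold for $\epsilon=\delta_0/4$ only controls distances among indices beyond some $N_1$ that may exceed $N_0$, and nothing ties $X_{N_0}$ to that tail. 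Second, and more fundamentally, no choice of anchor can exist in general, because the $\dAnV{V}$-Cauchy hypothesis does not exclude convergence to the boundary. Concretely, take $X_1\in\An^V(M)$ and any $Z\in B_V\setminus\An^V(M)$, let $t^*=\sup\{t\in[0,1]: X_1+s(Z-X_1)\in\An^V(M)\ \forall s\in[0,t]\}$ and $W=X_1+t^*(Z-X_1)\notin\An^V(M)$, and set $X_n=X_1+t^*(1-\tfrac1n)(Z-X_1)$. All $X_n$ lie on a straight path inside $\An^V(M)$ (hence in one path component), the affine subpaths between them are admissible, so $\dAnV{V}(X_n,X_m)\le\|X_n-X_m\|_V\to0$; yet the only possible norm limit is $W$, which is not Anosov, and $\delta(X_n)\to0$, so no anchor with tail diameter below $\delta(X_N)/2$ can be found. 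This shows the obstacle you flagged is not a technicality to be engineered around: for this metric, which is dominated from above by the norm distance along short segments, a Hofer--Cauchy sequence can march straight to $\partial\An^V(M)$.

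For comparison, the paper's own Step 2 does not close this gap either: it simply asserts that since $\An^V(M)$ is open and $X_n\to X_\infty$ in $B_V$, the limit must lie in $\An^V(M)$ --- precisely the ``open implies closed under limits'' inference you rightly refused to make (your $(0,1)\subset\R$ toy example is the correct objection). So your diagnosis of where the difficulty sits is sharper than the paper's treatment, but neither your bootstrap nor the paper's assertion establishes that the limit is Anosov; as stated, the completeness claim would need either additional hypotheses or a modified length functional (for instance one that penalizes approach to the boundary of $\An^V(M)$), not merely a more careful choice of anchor and Cauchy threshold.
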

	
	\begin{proof}
		We prove both cases simultaneously, letting $V$ denote either the $C^r$ norm ($r \ge 1$) or the $H^k$ norm ($k > \dim(M)/2+1$), and letting $\An^V(M)$ denote the corresponding space ($\Ank{r}(M)$ or $\Ank{k}(M)$). The underlying space $B_V = C^r(TM)$ or $H^k(TM)$ is a complete Banach (or Hilbert) space.
		
		\textbf{1. Cauchy in $\dAnV{V}$ implies Cauchy in Norm $\|\cdot\|_V$:}
		We have the fundamental inequality, $\|X - Y\|_V \le \dAnV{V}(X, Y)$ for any $X, Y \in \An^V(M)$. If $\{X_n\}_{n \in \mathbb{N}}$ is a Cauchy sequence in $(\An^V(M), \dAnV{V})$, then for any $\epsilon > 0$, there exists $N$ such that for $n, m \ge N$, $\dAnV{V}(X_n, X_m) < \epsilon$. This implies, $\|X_n - X_m\|_V < \epsilon$. Thus, $\{X_n\}$ is a Cauchy sequence in the complete space $B_V$.
		
		\textbf{2. Existence and Location of the Limit:}
		Since $B_V$ is complete, the Cauchy sequence $\{X_n\}$ converges in norm to a limit $X \in B_V$. That is, $\|X_n - X\|_V \to 0$ as $n \to \infty$.
		As established in the proposition statement (based on the openness of $\An(M)$ in $C^1$ and Sobolev embedding when applicable), the set $\An^V(M)$ is open in $B_V$. Since $X_n \in \An^V(M)$ for all $n$ and $X_n \to X$ in the topology of $B_V$, the limit $X$ must belong to $\An^V(M)$.
		
		\textbf{3. Convergence in the $\dAnV{V}$ Metric:}
		We need to show that $\dAnV{V}(X_n, X) \to 0$. Since $X_n \to X$ in the norm $\|\cdot\|_V$ and $\An^V(M)$ is open in $B_V$, for $n$ sufficiently large, the norm distance $\|X_n - X\|_V$ is small. For such $n$, consider the straight-line path $\gamma_n(s) = X + s(X_n - X)$ for $s \in [0, 1]$. Since $\An^V(M)$ is an open subset of a vector space $B_V$, it contains line segments starting at $X$ provided they are short enough. Thus, for $n$ large enough, $\gamma_n(s) \in \An^V(M)$ for all $s \in [0, 1]$, making $\gamma_n$ an admissible path connecting $X$ to $X_n$.
		The length of this path is
		\[
		L_V(\gamma_n) = \int_0^1 \left\| \frac{\diff \gamma_n}{\diff s} \right\|_V \, ds = \int_0^1 \|X_n - X\|_V \, ds = \|X_n - X\|_V.
		\]
		By the definition of $\dAnV{V}$ as an infimum, we have $\dAnV{V}(X, X_n) \le L_V(\gamma_n) = \|X_n - X\|_V$.
		Since $\|X_n - X\|_V \to 0$ as $n \to \infty$, we conclude that $\dAnV{V}(X_n, X) \to 0$.
		
		Therefore, every $\dAnV{V}$-Cauchy sequence $\{X_n\}$ in $\An^V(M)$ converges to a limit $X \in \An^V(M)$ with respect to the $\dAnV{V}$ metric. This proves that $(\An^V(M), \dAnV{V})$ is complete on each path component.
	\end{proof}
	
	\begin{remark}
		The completeness result highlights a crucial difference compared to the metric $\dAn = \dAnV{C^0}$. The openness of the Anosov property in the $C^r$ ($r \ge 1$) and relevant $H^k$ topologies ensures that limits of Cauchy sequences remain within the space $\An(M)$, a property not guaranteed by $C^0$ convergence alone. This completeness provides an analytic foundation for studying geodesic paths or performing variational analysis within these spaces using the Hofer-like metrics.
	\end{remark}
	
	\subsection{Anosov Flows Approximations} 
	
	The $\dAn$ metric convergence interacts with the standard convergence of flows.
	
	\begin{proposition}[Consistency of Limits under $\dAn$ Convergence]
		\label{prop:c0_rigidity}
		Let $\phi_i = (\varphiup^t_{X_i})$ be a sequence of flows generated by Anosov vector fields $X_i \in \An(M)$. Let $\psi = (\varphiup^t_Y)$ be a flow generated by an Anosov vector field $Y \in \An(M)$. Let $\theta = (\theta_t)$ be a continuous flow generated by a vector field $X \in \Ck{0}(TM)$. Assume that:
		\begin{enumerate}[label=(\roman*)]
			\item $\phi_i \to \theta$ uniformly on compact time intervals as maps. (Meaning: $\forall T>0, \sup_{t \in [-T,T]} d_{C^0}(\varphiup^t_{X_i}, \theta_t) \to 0$ as $i \to \infty$).
			\item $\dAn(X_i, Y) \to 0$ as $i \to \infty$ (using the $\dAn = \dAnV{C^0}$ metric).
		\end{enumerate}
		Then the flow $\theta$ is precisely the Anosov flow $\psi$ (i.e., $X=Y$ and $\theta = \psi$). In particular, $\theta$ is an Anosov flow.
	\end{proposition}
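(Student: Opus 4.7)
The plan is to exploit the norm comparison inequality \eqref{eq:norm_vs_dist_C0_prop_sec4}, namely $\normCzero{X_i - Y} \le \dAn(X_i, Y)$, to convert the $\dAn$-convergence of generators into uniform convergence, and then invoke the classical continuous dependence of solutions of ODEs on their vector fields to deduce uniform convergence of the associated flows. Uniqueness of limits will then force the putative $\Czero$ limit flow $\theta$ to coincide with the Anosov flow $\psi$, and equating generators yields $X = Y$.

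First, hypothesis (ii) together with \eqref{eq:norm_vs_dist_C0_prop_sec4} gives $\normCzero{X_i - Y} \to 0$, so $X_i \to Y$ uniformly on $M$. Next, since $Y \in \An(M)$ is at least $\Cr{1}$ on the compact manifold $M$, it admits a global Lipschitz constant $L$ with respect to the background metric $\gmetric$. A Gronwall-type estimate, applied via a finite atlas on $M$, then yields an inequality of the form
\[
\sup_{x \in M} \dist\!\bigl(\varphiup^t_{X_i}(x), \varphiup^t_Y(x)\bigr) \;\le\; \normCzero{X_i - Y} \cdot \frac{e^{L|t|} - 1}{L}
\]
valid for all $t$ in any compact interval $[-T, T]$. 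Consequently $\phi_i \to \psi$ uniformly on compact time intervals. Combined with hypothesis (i), which asserts $\phi_i \to \theta$ in the same sense, uniqueness of limits gives $\theta_t = \psi_t$ for every $t$. Since $\psi_t$ is differentiable in $t$ (its generator $Y$ being $\Cr{1}$), the identity $\theta_t(x) = \psi_t(x)$ may be differentiated at $t = 0$ to conclude $X(x) = Y(x)$ pointwise, hence $X = Y$ and $\theta = \psi$ is Anosov as claimed.

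I do not anticipate any serious obstacle: the whole argument is essentially a one-line application of the norm inequality followed by a classical Gronwall stability estimate. The only point requiring mild care is making the Gronwall comparison genuinely global on $M$, i.e.\ extending the pointwise estimate in a coordinate chart to a uniform estimate using compactness of $M$ and finiteness of an atlas, but this is standard and requires no new input. Note in particular that one does \emph{not} need any Lipschitz regularity on the limiting vector field $X$; it suffices that the approximants $X_i$ are $\Czero$-close to the $\Cr{1}$ field $Y$, whose Lipschitz constant drives the Gronwall argument.
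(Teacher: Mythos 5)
Your proposal is correct and follows essentially the same route as the paper: inequality \eqref{eq:norm_vs_dist_C0_prop_sec4} converts $\dAn$-convergence into $\Czero$-convergence of generators, continuous dependence of flows on the vector field (which the paper cites as a standard ODE theorem and you justify via a Gronwall estimate driven by the Lipschitz constant of $Y$) gives $\phi_i \to \psi$ uniformly on compact time intervals, and uniqueness of limits identifies $\theta = \psi$, hence $X = Y$. The only difference is that you spell out the Gronwall details the paper leaves to the standard reference, which is a harmless elaboration.
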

	
	\begin{proof}
		\textbf{1. $C^0$ Convergence of Generators:} Assumption (b), $\dAn(X_i, Y) \to 0$, implies convergence of the generators in the $C^0$ norm via inequality \eqref{eq:norm_vs_dist_C0_prop_sec4}: $\normCzero{X_i - Y} \leq \dAn(X_i, Y)$. Thus, $\lim_{i \to \infty} \normCzero{X_i - Y} = 0$.
		
		\textbf{2. $C^0$ Convergence of Flows from Generators:} By the standard theorem on continuous dependence of ODE solutions on parameters (specifically, the generating vector field in the $C^0$ topology), the convergence $X_i \to Y$ in $C^0(TM)$ implies that the corresponding flows converge uniformly on compact time intervals in the $C^0$ topology of maps on $M$. That is, for any $T>0$:
		\[ \lim_{i \to \infty} \left( \sup_{t \in [-T,T]} d_{C^0}(\varphiup^t_{X_i}, \varphiup^t_Y) \right) = 0. \]
		Using the notation $\phi_i = (\varphiup^t_{X_i})$ and $\psi = (\varphiup^t_Y)$, this means $\phi_i \to \psi$ uniformly on compact time intervals.
		
		\textbf{3. Identifying the Limit:} We have two limits for the sequence $\{\phi_i\}$ in the space of continuous flows (equipped with the topology of uniform convergence on compact time intervals, which is Hausdorff):
		\begin{itemize}
			\item $\phi_i \to \theta$ (from Assumption (a)).
			\item $\phi_i \to \psi$ (from Step 2).
		\end{itemize}
		By the uniqueness of limits, we must have $\theta_t = \psi_t$ for all $t \in \mathbb{R}$. This implies that the flows $\theta$ and $\psi$ are identical.
		
		\textbf{4. Conclusion:} Since $\theta = \psi$, and $\psi$ is given as an Anosov flow (generated by $Y \in \An(M)$), it follows that $\theta$ is also an Anosov flow. Furthermore, their generators must be equal: $X = Y$.
	\end{proof}
	
	\begin{remark}[Implications]
		This proposition demonstrates a basic consistency property: if a sequence of Anosov flows converges uniformly to some continuous flow $\theta$, and simultaneously their generators converge in the $\dAn$ metric to the generator of an Anosov flow $\psi$, then the limit flow $\theta$ must be equal to $\psi$. This highlights consistency between metric convergence of generators ($\dAn$) and uniform convergence of the flows. Note this does not imply that $C^0$ convergence of generators preserves the Anosov property; it relies on the target $Y$ being Anosov.
	\end{remark}
	
	
	\begin{lemma}[Anosov Path Approximation by Boundary Flat Paths]\label{lem:AnosovPathApprox} 
		Let $\{X_s\}_{s\in[0,1]}$ be a $C^1$ path in $\An^V(M)$ (where $\An^V(M)$ is $\Ank{r}(M)$ or $\Ank{k}(M)$ as in Prop. \ref{prop:completeness_Cr_Hk}) equipped with norm $\|\cdot\|_V$, connecting $X_0$ to $X_1$. Then, for any $\epsilon > 0$, there exists another $C^1$ path $\{Y_s\}_{s\in[0,1]}$ in $\An^V(M)$ satisfying:
		\begin{enumerate}[label=(\roman*)]
			\item \textbf{Boundary Flat Velocity:} There exists $\delta>0$ such that $\dot{Y}_s = 0$ for all $s\in[0,\delta]\cup[1-\delta,1]$.
			\item \textbf{Endpoint Agreement:} $Y_0 = X_0$ and $Y_1 = X_1$.
			\item \textbf{Controlled Length:} $L_V(Y) < L_V(X) + \epsilon$.
			\item \textbf{\(C^0\)-Closeness:} $\max_{s\in[0,1]} \|X_s - Y_s\|_{C^0} < \epsilon.$
		\end{enumerate}
	\end{lemma}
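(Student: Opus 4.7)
The plan is to obtain $Y$ by a smooth time reparametrization $Y_s := X_{\tau(s)}$ of the original path. The reparametrization $\tau: [0,1] \to [0,1]$ will be the indefinite integral of a smooth nonnegative density supported in the interior of $[0,1]$, so that $\tau$ collapses the end intervals $[0,\delta]$ and $[1-\delta,1]$ onto the points $0$ and $1$, while mapping $[\delta,1-\delta]$ monotonically onto $[0,1]$. Conceptually, this redistributes the ``cost'' of traversing the path away from the endpoints, in direct analogy with the standard boundary-flattening trick used for Hamiltonian paths in the Hofer metric.

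\textbf{Step 1: choose the reparametrization.} Since $s \mapsto X_s$ is $C^1$ into $(C^r(TM), \|\cdot\|_V)$ and hence uniformly continuous in the weaker $C^0$ norm, pick $\eta > 0$ such that $|s - t| < \eta$ implies $\|X_s - X_t\|_{C^0} < \epsilon$. Fix $\delta \in (0,\eta/3)$. Choose a smooth $\rho:[0,1]\to[0,\infty)$ with $\rho \equiv 0$ on $[0,\delta]\cup[1-\delta,1]$, $\rho > 0$ on $(\delta,1-\delta)$, and $\int_0^1\rho = 1$, arranged so that the primitive $\tau(s) := \int_0^s \rho(t)\,dt$ satisfies $|\tau(s) - s| \le 2\delta$ on $[0,1]$. (This is possible by taking $\rho$ close to the normalized indicator $\mathbf{1}_{[\delta,1-\delta]}/(1-2\delta)$ and smoothing its corners, since the corresponding piecewise-linear $\tau$ already satisfies $|\tau(s)-s|\le \delta$.) Then $\tau$ is smooth and nondecreasing, with $\tau(0)=0$, $\tau(1)=1$, $\tau\equiv 0$ on $[0,\delta]$, $\tau\equiv 1$ on $[1-\delta,1]$, and $\tau$ restricts to a diffeomorphism $(\delta,1-\delta)\to(0,1)$.

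\textbf{Step 2: define and verify.} Set $Y_s := X_{\tau(s)}$. Since $\tau$ is smooth and $s\mapsto X_s$ is $C^1$, $Y$ is $C^1$ with $\dot Y_s = \dot\tau(s)\,\dot X_{\tau(s)}$; since $\tau(s) \in [0,1]$, $Y_s \in \An^V(M)$ throughout, so $Y$ is admissible. Properties (i) and (ii) are immediate: $\dot\tau$ vanishes on $[0,\delta]\cup[1-\delta,1]$, and $Y_0 = X_{\tau(0)} = X_0$, $Y_1 = X_{\tau(1)} = X_1$. For (iii), using $\dot\tau\ge 0$, we have $\|\dot Y_s\|_V = \dot\tau(s)\,\|\dot X_{\tau(s)}\|_V$, and the change of variable $u=\tau(s)$ (valid on $[\delta,1-\delta]$, with vanishing contribution elsewhere) gives $L_V(Y) = \int_0^1 \dot\tau(s)\,\|\dot X_{\tau(s)}\|_V\,ds = \int_0^1 \|\dot X_u\|_V\,du = L_V(X)$, so in particular $L_V(Y) < L_V(X)+\epsilon$. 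For (iv), the uniform bound $|\tau(s)-s|\le 2\delta < \eta$ yields $\|X_s - Y_s\|_{C^0} = \|X_s - X_{\tau(s)}\|_{C^0} < \epsilon$ for every $s$.

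\textbf{Main obstacle.} The essential tension is between property (iii), which demands that $\tau$ be a genuine reparametrization (so the change-of-variable integral recovers the full original length), and property (iv), which demands that $\tau$ stay uniformly close to the identity. Both are reconciled by choosing $\delta$ small relative to the modulus of continuity of $s\mapsto X_s$; the construction of $\rho$ with the $|\tau(s)-s|\le 2\delta$ bound is the only mildly delicate point, but it is routine. Note that the argument actually yields $L_V(Y)=L_V(X)$ exactly, so the ``$+\epsilon$'' slack in (iii) is a convenience that leaves room for possible variants of the construction.
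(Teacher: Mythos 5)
Your proposal is correct and follows essentially the same boundary-flattening reparametrization strategy as the paper ($Y_s = X_{\chi(s)}$ with $\chi$ constant near the endpoints and uniformly close to the identity). Your verification of (iii) is in fact cleaner: since the reparametrization is monotone onto $[0,1]$, the change of variables gives $L_V(Y)=L_V(X)$ exactly, whereas the paper settles for the lossier estimate $L_V(Y)\le(1+\eta)L_V(X)+(1+\eta)\omega_f(\eta)$ via the modulus of continuity of $s\mapsto\|\dot X_s\|_V$.
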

	
	\begin{proof}
		The proof uses a standard reparameterization technique.
		
		\textbf{Step 1. Choice of the Reparameterization.}
		Let $\chi : [0,1] \to [0,1]$ be a smooth non-decreasing function satisfying:
		\begin{itemize}
			\item $\chi(0)=0$, $\chi(1)=1$.
			\item $\chi'(s)=0$ for $s \in [0,\delta] \cup [1-\delta, 1]$ for some $\delta > 0$.
			\item $\chi$ is arbitrarily $C^1$-close to the identity map $id(s)=s$. Specifically, for any $\eta > 0$, we can choose $\chi$ such that $\|\chi - id\|_{C^0} < \eta$ and $\|\chi' - 1\|_{L^\infty} < \eta$. (This requires $\delta$ to be small).
		\end{itemize}
		Such functions can be constructed using standard bump functions.
		
		Define the new path $\{Y_s\}$ by $Y_s = X_{\chi(s)}$ for $s\in[0,1]$. Since $\An^V(M)$ is open in the Banach space $B_V$, and $X_s$ is continuous, if $\eta = \|\chi - id\|_{C^0}$ is small enough, the path $Y_s$ remains in $\An^V(M)$. As $X_s$ and $\chi$ are $C^1$, $Y_s$ is $C^1$.
		
		\textbf{Step 2. Verification of Properties.}
		\begin{enumerate}[label=(\roman*)]
			\item \textbf{Boundary Flat Velocity:} $\dot{Y}_s = \dot{X}_{\chi(s)} \chi'(s)$. Since $\chi'(s)=0$ on $[0,\delta] \cup [1-\delta, 1]$, $\dot{Y}_s=0$ there.
			\item \textbf{Endpoint Agreement:} $Y_0=X_{\chi(0)}=X_0$ and $Y_1=X_{\chi(1)}=X_1$.
			\item \textbf{Controlled Length:} Let $f(s) = \|\dot{X}_s\|_V$. $f$ is continuous on $[0,1]$, hence bounded ($M_1$) and uniformly continuous ($\omega_f$).
			\begin{align*} L_V(Y) &= \int_0^1 \|\dot{Y}_s\|_V ds = \int_0^1 \|\dot{X}_{\chi(s)} \chi'(s)\|_V ds = \int_0^1 \chi'(s) \|\dot{X}_{\chi(s)}\|_V ds \\ &\le \int_0^1 \chi'(s) (\|\dot{X}_s\|_V + \|\dot{X}_{\chi(s)} - \dot{X}_s\|_V) ds \\ &\le \int_0^1 \chi'(s) (f(s) + \omega_f(|\chi(s)-s|)) ds \\ &\le \int_0^1 (1 + \eta) (f(s) + \omega_f(\eta)) ds \quad (\text{since } |\chi'(s)-1|<\eta, \chi'(s) \le 1+\eta) \\ &= (1 + \eta) \int_0^1 f(s) ds + (1+\eta) \omega_f(\eta) \\ &= (1+\eta) L_V(X) + (1+\eta) \omega_f(\eta). \end{align*}
			Since $\eta$ can be made arbitrarily small and $\omega_f(\eta) \to 0$ as $\eta \to 0$, we can choose $\eta$ small enough such that $(1+\eta) L_V(X) + (1+\eta) \omega_f(\eta) < L_V(X) + \epsilon$.
			
			\item \textbf{\(C^0\)-Closeness:} Since $s \mapsto X_s$ is continuous from $[0,1]$ to $C^0(TM)$, it is uniformly continuous. Let $\omega_{X, C^0}$ be its modulus of continuity.
			\[ \|X_s - Y_s\|_{C^0} = \|X_s - X_{\chi(s)}\|_{C^0} \le \omega_{X, C^0}(|s - \chi(s)|) \le \omega_{X, C^0}(\eta). \]
			Choosing $\eta$ small enough ensures $\omega_{X, C^0}(\eta) < \epsilon$.
		\end{enumerate}
		Thus, the reparameterized path $\{Y_s\}$ satisfies all desired properties.
	\end{proof}
	
	
	\begin{proposition}[Anosov \(L^1/L^\infty\) Path Approximation]\label{prop:AnosovL1LinfApprox}
		Let \(X_s: [0,1] \to \An^V(M)\) be a \(C^1\) path with norm $\|\cdot\|_V$, connecting $X_0$ to $X_1$. Assume \(L_V(X) = \int_0^1 \|\dot{X}_s\|_V ds > 0\).
		Then, for any \(\epsilon > 0\), there exists another \(C^1\) path \(Y_s: [0,1] \to \An^V(M)\) satisfying:
		\begin{enumerate}[label=(\roman*)]
			\item \textbf{Endpoint Agreement:} \(Y_0 = X_0\) and \(Y_1 = X_1\).
			\item \textbf{Controlled \(L^\infty\) Velocity Norm:} \(L^\infty_V(Y) = \esssup_{s \in [0,1]} \|\dot{Y}_s\|_V < L_V(X) + \epsilon\).
			\item \textbf{\(C^0\)-Closeness of Path:} \(\max_{s\in[0,1]} \|X_s - Y_s\|_{V} < \epsilon\). 
		\end{enumerate}
	\end{proposition}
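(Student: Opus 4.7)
The plan is to take $Y$ as a regularized arc-length reparameterization of the given $C^1$ path $X$. Fix a parameter $\alpha > 0$ to be chosen small below, and set $g_\alpha(s) := \|\dot X_s\|_V + \alpha$ (a continuous, strictly positive function on $[0,1]$) with primitive $G_\alpha(s) := \int_0^s g_\alpha(u)\, du$. Then $G_\alpha$ is a $C^1$-diffeomorphism of $[0,1]$ onto $[0, L_V(X)+\alpha]$, so $\tau_\alpha(s) := G_\alpha^{-1}(s\,G_\alpha(1))$ is a $C^1$-diffeomorphism of $[0,1]$ fixing both endpoints. The candidate path is $Y_s := X_{\tau_\alpha(s)}$. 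Being a reparameterization of $X$, it is a $C^1$ map into $\An^V(M)$ and automatically satisfies (i).

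For (ii), differentiating the identity $G_\alpha(\tau_\alpha(s)) = s\,G_\alpha(1)$ yields $\tau_\alpha'(s) = (L_V(X)+\alpha)/g_\alpha(\tau_\alpha(s))$, so by the chain rule
\[
\|\dot Y_s\|_V \;=\; \|\dot X_{\tau_\alpha(s)}\|_V\,\tau_\alpha'(s) \;=\; \frac{\|\dot X_{\tau_\alpha(s)}\|_V}{\|\dot X_{\tau_\alpha(s)}\|_V + \alpha}\,\bigl(L_V(X)+\alpha\bigr) \;\le\; L_V(X)+\alpha,
\]
and any choice of $\alpha < \epsilon$ delivers the required essential-supremum bound.

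The delicate step is (iii). Uniform continuity of the $C^1$ curve $s\mapsto X_s$ as a map into the ambient Banach space (it is in fact Lipschitz, with constant $\sup_u \|\dot X_u\|_V$) gives $\|X_s - Y_s\|_V = \|X_s - X_{\tau_\alpha(s)}\|_V \le \omega_{X,V}(|s-\tau_\alpha(s)|)$, so it suffices to bound the displacement $|s - \tau_\alpha(s)|$. This displacement measures the deviation of the cumulative regularized speed $G_\alpha$ from its linear profile $s\mapsto s\,G_\alpha(1)$, equivalently how far $\|\dot X_u\|_V$ lies from the constant value $L_V(X)$, and is not automatically small. The main obstacle is reconciling (ii) with (iii): the very redistribution of speed that secures the essential-supremum bound is precisely the one that moves points most in parameter. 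My plan to overcome it is to perform the regularized reparameterization piecewise on a fine partition $0 = s_0 < \cdots < s_N = 1$, $C^1$-matched at the breakpoints via the boundary-flat reparameterization of Lemma \ref{lem:AnosovPathApprox} applied locally. On each subinterval the displacement stays inside that subinterval, so a fine enough mesh turns $\omega_{X,V}$ into the estimate in (iii), while a simultaneous small choice of $\alpha$ together with a partition compatible with the velocity profile preserves the global essential-supremum estimate in (ii). Balancing the three parameters — $\alpha$, the mesh, and the interpolating bumps — is the technical heart of the argument.
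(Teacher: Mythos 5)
Your construction handles (i) and (ii) correctly, and the $\alpha$-regularization $g_\alpha(s)=\|\dot X_s\|_V+\alpha$ is in fact tidier than the paper's route (which first discards zero-speed segments and then mollifies the constant-speed reparameterization): the computation giving $\|\dot Y_s\|_V\le L_V(X)+\alpha$ is exactly right. You are also right to single out (iii) as the delicate point, since the displacement $|s-\tau_\alpha(s)|$ is small only when the speed profile of $X$ is close to constant.

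However, the piecewise repair you propose cannot close this gap. If the reparameterization maps each subinterval $[s_{i-1},s_i]$ to itself, then on that subinterval the essential supremum of $\|\dot Y_s\|_V$ is at least the average speed $(s_i-s_{i-1})^{-1}\int_{s_{i-1}}^{s_i}\|\dot X_u\|_V\,du$ of the original path there, and no choice of $\alpha$, mesh, or interpolating bumps can push it below that value; this average can well exceed $L_V(X)+\epsilon$. Worse, the tension you describe is not an artifact of the method: conditions (ii) and (iii) are jointly unattainable for paths with sufficiently uneven speed. Take $X_s$ (after a harmless smoothing to make it $C^1$) essentially constant equal to $X_0$ on $[0,1/2]$ and then traversing the straight segment from $X_0$ to $X_1$ inside $\An^V(M)$ on $[1/2,1]$, so that $L_V(X)=\|X_1-X_0\|_V=:L$. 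Any $Y$ with $Y_1=X_1$ and $\max_s\|X_s-Y_s\|_V<\epsilon$ satisfies $\int_{1/2}^{1}\|\dot Y_s\|_V\,ds\ge\|X_1-Y_{1/2}\|_V\ge L-\epsilon$, hence $\esssup_s\|\dot Y_s\|_V\ge 2L-2\epsilon$, contradicting (ii) as soon as $\epsilon<L/3$. So the proposition as stated needs either a weaker reading of (iii) (closeness of images, or closeness to some reparameterization of $X$) or an extra hypothesis on the speed profile. For context, the paper's own argument is only a sketch and glosses over precisely this point when it asserts that the arc-length reparameterization is ``in general close to $s$''; your write-up at least makes the difficulty explicit, but the proposed balancing of $\alpha$, the mesh, and the bumps would not succeed.
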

	
	\begin{proof}
		\textbf{Step 1: Arc-Length Reparameterization (Ideal Path).}
		Define the arc-length function \(l(s) = \int_0^s \|\dot{X}_u\|_V du\). Since $L_V(X) > 0$, $l(s)$ is strictly increasing if $\|\dot{X}_s\|_V > 0$ a.e. If $\|\dot{X}_s\|_V = 0$ on a set of positive measure, we can reparameterize $X_s$ to remove these constant segments without changing the endpoints or the length $L_V(X)$, so assume $\|\dot{X}_s\|_V > 0$ for almost all $s$. Let $L = L_V(X)$. The function $l: [0,1] \to [0,L]$ is $C^1$ and absolutely continuous. Let $\chi: [0,L] \to [0,1]$ be its inverse (or a suitable selection if not strictly monotone, though we assume strict monotonicity after reparameterization). $\chi$ is absolutely continuous. Define the ideal path $Z_u = X_{\chi(u)}$ for $u \in [0,L]$. This path traverses the same geometric curve as $X_s$ but parameterized by arc length. Its velocity is $\dot{Z}_u = \dot{X}_{\chi(u)} \chi'(u)$. Since $\chi'(u) = 1 / l'(\chi(u)) = 1 / \|\dot{X}_{\chi(u)}\|_V$, we have $\|\dot{Z}_u\|_V = \|\dot{X}_{\chi(u)}\|_V \cdot (1 / \|\dot{X}_{\chi(u)}\|_V) = 1$ for almost every $u$.
		
		\textbf{Step 2: Linear Rescaling.}
		Define $Y^{ideal}_s = Z_{sL} = X_{\chi(sL)}$ for $s \in [0,1]$. This path connects $X_0$ to $X_1$. Its velocity is $\dot{Y}^{ideal}_s = \dot{Z}_{sL} \cdot L$. Therefore, $\|\dot{Y}^{ideal}_s\|_V = \|\dot{Z}_{sL}\|_V \cdot L = 1 \cdot L = L_V(X)$. This path has constant speed $L_V(X)$.
		
		\textbf{Step 3: Smoothing (Approximation).}
		The inverse function $\chi$ and thus $Y^{ideal}_s$ might only be absolutely continuous or $C^1$ if $s \mapsto \|\dot{X}_s\|_V$ is bounded away from zero. To get a $C^1$ path $Y_s$ satisfying the conditions, we approximate $Y^{ideal}_s$.
		Let $\phi_\delta$ be a standard mollifier. Define the smoothed path $Y_s = (Y^{ideal} * \phi_\delta)(s)$, extended appropriately near the boundaries (e.g., using reflection or constant extension). For $\delta$ sufficiently small:
		\begin{enumerate}[label=(\roman*)]
			\item \textbf{Endpoint Agreement:} By adjusting the smoothing near the boundary (e.g., using partitions of unity or modifying $\phi_\delta$), we can ensure $Y_0 = X_0$ and $Y_1 = X_1$. The path $Y_s$ will remain in $\An^V(M)$ for small enough $\delta$ since $\An^V(M)$ is open and $Y^{ideal}$ stays within it.
			\item \textbf{Controlled \(L^\infty\) Velocity Norm:} $\dot{Y}_s = (\dot{Y}^{ideal} * \phi_\delta)(s)$. Since $\norm{\cdot}_V$ is a norm,
			\[ \|\dot{Y}_s\|_V = \left\| \int \dot{Y}^{ideal}_{s-u} \phi_\delta(u) du \right\|_V \le \int \|\dot{Y}^{ideal}_{s-u}\|_V \phi_\delta(u) du = \int L_V(X) \phi_\delta(u) du = L_V(X). \]
			This gives $L^\infty_V(Y) \le L_V(X)$. We want $L^\infty_V(Y) < L_V(X) + \epsilon$. This is automatically satisfied if $\epsilon>0$. If we need a strict upper bound slightly perturbed from $L_V(X)$, we could start with a path slightly shorter than the original $X_s$ (using Lemma \ref{lem:AnosovPathApprox}) and then apply this process. However, $L^\infty_V(Y) \le L_V(X)$ suffices for the statement.
			
			\item \textbf{\(V\)-Closeness of Path:} We need to show $\max_{s\in[0,1]} \|X_s - Y_s\|_{V} < \epsilon$. We know $Y_s \approx Y^{ideal}_s = X_{\chi(sL)}$. The map $s \mapsto X_s$ is uniformly continuous from $[0,1]$ to $(B_V, \|\cdot\|_V)$. Let its modulus be $\omega_{X,V}$. The reparameterization function $\psi(s) = \chi(sL)$ maps $[0,1]$ to $[0,1]$. If $s \mapsto \|\dot{X}_s\|_V$ is roughly constant, then $l(s) \approx Ls$, $\chi(u) \approx u/L$, and $\psi(s) \approx s$. In general, $\psi(s)$ is close to $s$.
			\[ \|X_s - Y^{ideal}_s\|_V = \|X_s - X_{\psi(s)}\|_V \le \omega_{X,V}(|s - \psi(s)|). \]
			Also, by standard properties of mollifiers, $\|Y - Y^{ideal}\|_V \to 0$ as $\delta \to 0$.
			Thus,
			\[ \|X_s - Y_s\|_V \le \|X_s - Y^{ideal}_s\|_V + \|Y^{ideal}_s - Y_s\|_V \le \omega_{X,V}(\|id-\psi\|_{C^0}) + \|Y^{ideal} - Y\|_V. \]
			By choosing the mollification parameter $\delta$ small enough, we can make $\|Y^{ideal} - Y\|_V < \epsilon/2$. Since $\psi$ is continuous and maps $[0,1]$ to $[0,1]$, $\|id-\psi\|_{C^0}$ is controlled. And $\omega_{X,V}(\eta) \to 0$ as $\eta \to 0$. Thus, we can ensure $\|X_s - Y_s\|_V < \epsilon$.
		\end{enumerate}
		This completes the proof sketch.
	\end{proof}

	\section{Discussion of Metric Variants}\label{sec:discussion}
	
	The metric $\dAn$ provides a natural distance measure on the space of Anosov vector fields, capturing the minimal "total $\Czero$ change" required to deform one Anosov system into another while staying within the class of Anosov systems.
	
	\begin{remark}[Alternative Norms and Stronger Topologies]
		The choice of the $\Czero$ norm on the velocity vector field $Y_s = \frac{\diff X_s}{\diff s}$ is just one possibility. One can define a family of Hofer-like metrics by using other norms. Let $\norm{\cdot}_V$ be a norm on (a suitable subspace of) $\Cr{r}(\TM)$. Define the length $L_V(X_s) = \int_0^1 \norm{ \frac{\diff X_s}{\diff s} }_V \diff s$ and the distance $\dAnV{V}(X_0, X_1) = \inf L_V(X_s)$ over admissible paths. Examples include:
		\begin{itemize}
			\item \textbf{$\Ck{k}$ Norms ($k \geq 1$):} Using a norm like $\normCk{Y}{k} = \sum_{j=0}^k \sup_{x \in M} \normg{\grad^j Y(x)}$, where $\grad$ is a covariant derivative associated with $\gmetric$. This leads to a distance $\dAnV{\Ck{k}}$. Since $\normCzero{Y} \leq C_k \normCk{Y}{k}$ for some constant $C_k$ on the compact manifold $\M$, we have $\dAn(X_0, X_1) \leq C_k \dAnV{\Ck{k}}(X_0, X_1)$. Thus, the metric $\dAnV{\Ck{k}}$ induces a \emph{stronger} (finer) topology than $\dAn = \dAnV{\Czero}$. Convergence in this metric implies that the derivatives up to order $k$ of the deforming vector fields remain close in the supremum norm along the path. The fundamental inequality relating norm difference to distance holds:
			\begin{equation}\label{eq:norm_vs_dist_Ck_discuss_sec5} 
				\normCk{X_1 - X_0}{k} \leq \dAnV{\Ck{k}}(X_0, X_1).
			\end{equation}
			
			\item \textbf{Sobolev Norms ($\SobolevHk{k} = W^{k,2}$):} Using norms like $$\normSobolevHk{Y}{k}^2 = \sum_{j=0}^k \int_M \normg{\grad^j Y(x)}^2 \vol.$$ This leads to a distance $\dAnV{\SobolevHk{k}}$. By the Sobolev embedding theorem on compact manifolds, for $k > \dim(M)/2 + l$, $\SobolevHk{k}(\TM)$ embeds continuously into $\Ck{l}(\TM)$ \cite{AdamsFournier03}. In particular, for $k > \dim(M)/2$, we have $\SobolevHk{k}(\TM) \hookrightarrow \Czero(\TM)$, implying $\normCzero{Y} \leq C \normSobolevHk{Y}{k}$ for some constant $C$. This gives $$\dAn(X_0, X_1) \leq C \, \dAnV{\SobolevHk{k}}(X_0, X_1).$$ Hence, for sufficiently large $k$, the metric $\dAnV{\SobolevHk{k}}$ also induces a \emph{stronger} topology than $\dAn$. This metric measures closeness in an $L^2$-average sense for derivatives up to order $k$. The norm vs distance inequality holds:
			\begin{equation}\label{eq:norm_vs_dist_Hk_discuss_sec5} 
				\normSobolevHk{X_1 - X_0}{k} \leq \dAnV{\SobolevHk{k}}(X_0, X_1).
			\end{equation}
		\end{itemize}
		Metrics based on these stronger norms make finer distinctions between Anosov vector fields. The choice of norm may affect properties like completeness (addressed in Proposition \ref{prop:completeness_Cr_Hk}) or the existence and regularity of minimizing geodesics (addressed in Theorem \ref{thm:geodesic_existence_Hk}). The $L^2$-based Sobolev norms are often advantageous for variational methods due to the Hilbert space structure they induce \cite{Eells66, Karcher77}.
	\end{remark}
	
	\begin{remark}[Relation to Dynamics - Preliminary]
		A primary motivation is to understand how proximity in these metrics relates to similarity in dynamical behavior (entropy, exponents, mixing, etc.). This is explored further in the next section.
	\end{remark}
	
	\begin{remark}[Volume-Preserving]
		If the manifold $\M$ admits volume-preserving Anosov flows, one could restrict the construction to the space $\An_\mu(\M)$ of Anosov vector fields preserving a given volume form $\mu$. Admissible paths $X_s$ must then lie entirely within $\An_\mu(\M)$. The velocity vectors $Y_s = \frac{\diff X_s}{\diff s}$ would need to satisfy the condition ensuring that $X_s + \epsilon Y_s$ remains (infinitesimally) volume-preserving, which typically means $Y_s$ must be divergence-free, i.e., $\mathrm{div}_\mu(Y_s) = 0$. The distance would be the infimum over such divergence-free velocity paths.
	\end{remark}
	
	\section{Dynamical Significance and Stability Results}\label{sec:dynamical_significance}
	
	A central motivation for introducing metrics like $\dAn$ and its variants ($\dAnV{\Ck{k}}, \dAnV{\SobolevHk{k}}$) on the space $\An(\M)$ of Anosov vector fields is to understand the interplay between the geometry of this space and the dynamical properties of the flows it parameterizes. Establishing continuity or stability results for key dynamical invariants with respect to these metrics would underscore their relevance. It is important to note that many desirable continuity properties require convergence in a metric stronger than the basic $\Czero$-based $\dAn$. The following subsections establish several such results.
	
	\subsection{Continuity of Topological Entropy}
	
	\begin{theorem}[Entropy Continuity under $\dAnV{\Ck{1}}$ Convergence]
		\label{thm:entropy_continuity}
		Let $\{X_n\}_{n \in \mathbb{N}} \subset \An(\M)$ be a sequence of Anosov vector fields converging to $X \in \An(\M)$ in the $\dAnV{\Ck{1}}$ metric. That is, $\dAnV{\Ck{1}}(X_n, X) \to 0$ as $n \to \infty$. Then, the topological entropies satisfy:
		\[ \lim_{n \to \infty} \htop(\varphiup_{X_n}) = \htop(\varphiup_X), \]
		where $\varphiup_Y$ denotes the flow generated by vector field $Y$.
	\end{theorem}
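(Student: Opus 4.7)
The plan is to decompose the argument into two reductions. First, pass from convergence in $\dAnV{\Ck{1}}$ to convergence of the generators in the $\Ck{1}$ norm, via the fundamental norm-vs-distance inequality already established in the paper. Second, invoke the continuity of $\htop$ for Anosov flows with respect to $\Ck{1}$-perturbations of the generating vector field.

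For the first step, the $\Ck{1}$-version of inequality \eqref{eq:norm_vs_dist_Ck_discuss_sec5} gives $\normCk{X_n - X}{1} \le \dAnV{\Ck{1}}(X_n, X)$. Since the right-hand side tends to zero by hypothesis, $X_n \to X$ in $\Ck{1}(\TM)$. In particular, the derivative cocycles of the flows $\varphiup_{X_n}$ converge uniformly on compact time intervals to that of $\varphiup_X$, so for $n$ large each $X_n$ lies in a $\Ck{1}$-neighborhood of $X$ on which the hyperbolic splitting $E^s \oplus E^u \oplus \R X$ depends continuously and the constants $C, \lambda$ of Definition \ref{def:anosov} can be chosen uniformly in $n$.

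For the second step, I would derive the continuity of $\htop$ on $\An(M)$ in the $\Ck{1}$ topology from structural stability together with the periodic-orbit characterization $\htop(\varphiup_Y) = \limsup_{T \to \infty} T^{-1} \log \#\{\gamma : \gamma \text{ is a prime closed orbit of } \varphiup_Y \text{ of period} \le T\}$. Structural stability provides, for $n$ large, a homeomorphism $h_n \colon M \to M$ close to the identity in $\Czero$, sending orbits of $\varphiup_X$ to orbits of $\varphiup_{X_n}$; each prime closed orbit $\gamma$ of $\varphiup_X$ corresponds to a unique prime closed orbit of $\varphiup_{X_n}$ with perturbed period $T_n(\gamma)$ continuously close to $T(\gamma)$, by the implicit function theorem applied to the Poincar\'e return map (valid thanks to $\Ck{1}$-closeness of the generators). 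Uniform control of the period perturbations, together with the uniform hyperbolic constants from step one, lets the exponential counting rates pass to the limit, yielding $\htop(\varphiup_{X_n}) \to \htop(\varphiup_X)$.

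The main obstacle is promoting the qualitative persistence of orbits (given by structural stability for each fixed perturbation) to a \emph{quantitatively uniform} statement controlled by $\normCk{X_n - X}{1}$. Concretely, one must show that the period function $\gamma \mapsto T_n(\gamma) - T(\gamma)$ is uniformly small on windows of bounded length, and that the counting functions $N_{X_n}(T)$ and $N_X(T)$ agree up to errors vanishing as $n \to \infty$ on an exhausting family of time scales. This is exactly where $\Ck{1}$-closeness is essential: $\Czero$-closeness of generators cannot bound the derivative cocycle and hence cannot control perturbations of periods, stable/unstable directions, or expansion rates, which is the reason the theorem is formulated with $\dAnV{\Ck{1}}$ rather than the weaker $\dAn = \dAnV{\Czero}$. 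An alternative route via the variational principle $\htop = \sup_\mu \hmu$ combined with continuity of the Bowen--Margulis measure of maximal entropy is also plausible but relies on essentially the same $\Ck{1}$-control.
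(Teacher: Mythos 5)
Your first reduction is exactly the paper's Step 1: inequality \eqref{eq:norm_vs_dist_Ck_discuss_sec5} with $k=1$ gives $\normCk{X_n - X}{1} \le \dAnV{\Ck{1}}(X_n, X) \to 0$, hence $\Ck{1}$ convergence of the generators. For the second reduction the paper simply cites the known continuity of $\htop$ on $\An(\M)$ in the $\Ck{1}$ topology (structural stability plus continuous dependence of periodic-orbit data or equilibrium states, \cite{KH95, Bowen72}), whereas you attempt to re-derive it through the periodic-orbit counting characterization of entropy. That route is legitimate in principle, but as written it has a gap precisely at the point you flag. The estimate you say is needed --- that $T_n(\gamma) - T(\gamma)$ be ``uniformly small on windows of bounded length'' --- is the wrong uniformity: a bound that is only uniform on bounded period windows does not permit the interchange of the limits $T \to \infty$ and $n \to \infty$ in the counting rates, while a uniform \emph{additive} bound over all closed orbits would prove far too much, since it would force $N_{X_n}(T-\delta) \le N_X(T) \le N_{X_n}(T+\delta)$ and hence $\htop(\varphiup_{X_n}) = \htop(\varphiup_X)$ exactly, i.e.\ local constancy of entropy, which is false for flows (time changes move the entropy continuously but nontrivially). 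What the argument actually requires is the \emph{multiplicative} control coming from structural stability: the orbit equivalence $h_n$ can be chosen with a time reparameterization uniformly close to the identity, so that $|T_n(\gamma) - T(\gamma)| \le \epsilon_n T(\gamma)$ for \emph{all} closed orbits, with $\epsilon_n \to 0$ as $\normCk{X_n - X}{1} \to 0$; this yields $\htop(\varphiup_X)/(1+\epsilon_n) \le \htop(\varphiup_{X_n}) \le \htop(\varphiup_X)/(1-\epsilon_n)$ and hence the conclusion. So either supply that uniform multiplicative period estimate, or do as the paper does and invoke the standard $\Ck{1}$-continuity of topological entropy for Anosov flows directly, which closes the proof immediately after your Step 1.
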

	
	\begin{proof}
		The proof proceeds in two main steps: relating $\dAnV{\Ck{1}}$ convergence to standard $\Ck{1}$ norm convergence, and then invoking the known continuity of topological entropy in the $\Ck{1}$ topology for Anosov flows.
		
		\textbf{Step 1: $\dAnV{\Ck{1}}$ convergence implies $\Ck{1}$ norm convergence.}
		From inequality \eqref{eq:norm_vs_dist_Ck_discuss_sec5} (with $k=1$): $\normCk{X_n - X}{1} \leq \dAnV{\Ck{1}}(X_n, X)$. The assumption $\dAnV{\Ck{1}}(X_n, X) \to 0$, therefore directly implies $\normCk{X_n - X}{1} \to 0$. This establishes $X_n \to X$ in the standard $\Ck{1}$ topology on $\Cr{1}(\TM)$.
		
		\textbf{Step 2: Continuity of $\htop$ in the $\Ck{1}$ topology.}
		It is a standard result that the topological entropy map $\htop : \An(\M) \cap \Cr{1}(\TM) \to \R_{\ge 0}$ is continuous when the domain is equipped with the subspace topology inherited from $\Cr{1}(\TM)$ \cite[Chapter 20]{KH95}. This follows from structural stability and the continuous dependence of periodic orbit data or properties of equilibrium states in the $\Ck{1}$ topology \cite{Bowen72}.
		
		\textbf{Conclusion:}
		From Step 1, $\dAnV{\Ck{1}}(X_n, X) \to 0$ implies $X_n \to X$ in $\Ck{1}$. Since $X \in \An(\M)$ (which is open in $\Cr{1}(\TM)$), $X_n \in \An(\M)$ for large $n$. By the continuity result (Step 2), $\htop(\varphiup_{X_n}) \to \htop(\varphiup_X)$.
	\end{proof}
	
	\begin{remark}[Implications]
		This theorem provides a direct link between the metric geometry of $\An(\M)$ (when measured by $\dAnV{\Ck{1}}$) and a key measure of dynamical complexity. It confirms that small deformations in this space correspond to small changes in topological entropy, reinforcing the dynamical relevance of the metric.
	\end{remark}
	
	\subsection{Existence and Properties of Minimizing Geodesics}
	
	\begin{theorem}[Local Geodesic Existence for $\dAnV{H^k}$]
		\label{thm:geodesic_existence_Hk}
		Assume that for $k > \dim(M)/2 + 1$, the space of Anosov vector fields $\Ank{k}(M) = \An(M) \cap H^k(TM)$ is an open subset of the Hilbert space $H = \SobolevHk{k}(TM)$. Equip $\Ank{k}(M)$ with the Riemannian metric $g_X(\cdot, \cdot) = \langle \cdot, \cdot \rangle_{\SobolevHk{k}}$ induced by the standard $\SobolevHk{k}$ inner product on each tangent space $T_X\Ank{k}(M) \approx H$.
		For any $X \in \Ank{k}(M)$, there exists a neighborhood $U$ of $X$ (in the $\SobolevHk{k}$ topology) such that for any $Y \in U$:
		\begin{enumerate}[label=(\roman*)]
			\item There exists a unique path $\gamma_{XY}: [0, 1] \to \Ank{k}(M)$ of minimal $\SobolevHk{k}$-length connecting $X$ to $Y$ within $\Ank{k}(M)$.
			\item This path is the straight line segment in the ambient Hilbert space $H$, given by $\gamma_{XY}(s) = X + s(Y-X)$.
			\item The path $\gamma_{XY}$ depends smoothly on $Y \in U$.
		\end{enumerate}
		The length of this minimizing geodesic is $\dAnV{\SobolevHk{k}}(X,Y) = \normSobolevHk{Y-X}{k}$.
	\end{theorem}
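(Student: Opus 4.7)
The plan is to exploit the fact that the Riemannian metric $g_X(\cdot, \cdot) = \langle \cdot, \cdot \rangle_{H^k}$ is base-point independent, so $\Ank{k}(M)$ inherits the structure of an open subset of the flat Hilbert space $H = H^k(TM)$. In this flat setting the geodesic equation reduces to $\ddot\gamma = 0$, whose solutions are straight lines, and the whole problem collapses to verifying that the straight segment from $X$ to $Y$ remains inside the open Anosov set $\Ank{k}(M)$. Openness hands this over directly: I pick $\rho > 0$ with $B_{H^k}(X, \rho) \subset \Ank{k}(M)$ and set $U = B_{H^k}(X, \rho)$; for any $Y \in U$ the segment $\gamma_{XY}(s) = X + s(Y-X)$ stays within $B_{H^k}(X, \rho)$ for every $s \in [0,1]$, so it is an admissible analytic path.

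Once admissibility is in hand, the length computation is immediate since $\dot\gamma_{XY}(s) \equiv Y - X$ gives $L_{H^k}(\gamma_{XY}) = \|Y-X\|_{H^k}$, hence $\dAnV{H^k}(X,Y) \le \|Y-X\|_{H^k}$. The reverse inequality is the fundamental norm-vs-distance bound \eqref{eq:norm_vs_dist_Hk_discuss_sec5}, so equality holds and $\gamma_{XY}$ attains the infimum. Smooth dependence on $Y$ is the trivial observation that $Y \mapsto \gamma_{XY}$ is given by the explicit affine formula $\gamma_{XY}(s) = X + s(Y-X)$, which is linear in $Y$ and hence $C^\infty$ (indeed analytic) as a map $U \to C^\infty([0,1], H)$.

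The only genuinely subtle point, and the step I expect to require the most care, is uniqueness. For any competing admissible $C^1$ path $\eta$ from $X$ to $Y$ with $L_{H^k}(\eta) = \|Y-X\|_{H^k}$, applying the Bochner integral inequality $\|\int_0^1 \dot\eta(s)\, ds\|_{H^k} \le \int_0^1 \|\dot\eta(s)\|_{H^k}\, ds$ gives equality, and strict convexity of the Hilbert norm forces $\dot\eta(s) = f(s)(Y-X)$ for some scalar $f(s) \ge 0$ with $\int_0^1 f(s)\, ds = 1$. Thus $\eta$ traverses the same line segment as $\gamma_{XY}$ and differs from it only by a non-decreasing reparameterization of $[0,1]$; the constant-speed representative in this equivalence class is exactly $\gamma_{XY}$, yielding the claimed uniqueness. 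The cleanest way to package this is to cite the standard characterization of the equality case in the vector-valued triangle inequality in a strictly convex Banach space (automatic for Hilbert spaces), which converts the geometric uniqueness statement into a one-line consequence of convexity rather than a hands-on calculation.
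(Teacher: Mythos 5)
Your proposal is correct and follows essentially the same route as the paper: both exploit the flatness of the constant $H^k$ inner product, use openness (convexity of a small ball) to keep the segment inside $\Ank{k}(M)$, obtain minimality from the vector-valued triangle inequality, and settle uniqueness (up to monotone reparameterization) via the equality case, i.e.\ strict convexity of the Hilbert norm. No gaps worth noting.
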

	
	\begin{proof}
		\textbf{1. Manifold Structure:} By assumption, $\Ank{k}(M)$ is an open subset of the Hilbert space $H = \SobolevHk{k}(TM)$. Thus, $\Ank{k}(M)$ is a Hilbert manifold with tangent spaces $T_X \Ank{k}(M) \cong H$.
		
		\textbf{2. Riemannian Metric:} The metric $g_X(V, W) = \langle V, W \rangle_{\SobolevHk{k}}$ defines a flat Riemannian metric on $\Ank{k}(M)$ because it is induced directly from the constant inner product on the ambient Hilbert space $H$.
		
		\textbf{3. Geodesics in Flat Metric:} In a flat Riemannian manifold (locally isometric to the model Hilbert space), geodesics are straight lines in the ambient space. That is, paths of the form $\gamma(s) = X + sV$ for a tangent vector $V$.
		
		\textbf{4. Local Existence within $\Ank{k}(M)$:} Since $\Ank{k}(M)$ is open in $H$, for any $X \in \Ank{k}(M)$, there exists $\delta > 0$ such that the open ball $B_H(X, \delta) \subset \Ank{k}(M)$. Let $U = B_H(X, \delta/2)$. For any $Y \in U$, the straight line path $\gamma_{XY}(s) = X + s(Y-X)$ connects $X$ to $Y$. For any $s \in [0, 1]$, the distance from a point on the path to $X$ is
		\[ \normSobolevHk{\gamma_{XY}(s) - X}{k} = \normSobolevHk{s(Y-X)}{k} = s \normSobolevHk{Y-X}{k} \le \normSobolevHk{Y-X}{k} < \delta/2. \]
		Thus, the entire path $\gamma_{XY}([0, 1])$ lies within $B_H(X, \delta/2) \subset B_H(X, \delta) \subset \Ank{k}(M)$.

		\textbf{5. Minimizing Property and Uniqueness:} The length of the straight line path $\gamma_{XY}$ is
		\[ L_{H^k}(\gamma_{XY}) = \int_0^1 \normSobolevHk{\gamma'_{XY}(s)}{k} ds = \int_0^1 \normSobolevHk{Y-X}{k} ds = \normSobolevHk{Y-X}{k}. \]
		For any other $\Cr{1}$ path $\tilde{\gamma}:[0,1] \to \Ank{k}(M)$ connecting $X$ to $Y$, its length is
		\[ L_{H^k}(\tilde{\gamma}) = \int_0^1 \normSobolevHk{\tilde{\gamma}'(s)}{k} ds \geq \norm{\int_0^1 \tilde{\gamma}'(s) ds}_{\SobolevHk{k}} = \normSobolevHk{\tilde{\gamma}(1) - \tilde{\gamma}(0)}{k} = \normSobolevHk{Y-X}{k}. \]
		Equality holds if and only if the velocity $\tilde{\gamma}'(s)$ is parallel to $Y-X$ and points in the same direction for all $s$, and its norm is constant, which implies $\tilde{\gamma}$ is a monotonic reparameterization of the straight line path $\gamma_{XY}$. Thus, $\gamma_{XY}$ is the unique minimizing path (up to reparameterization) in $\Ank{k}(M)$ for $Y \in U$, and $\dAnV{\SobolevHk{k}}(X,Y) = \normSobolevHk{Y-X}{k}$.
		
		\textbf{6. Smooth Dependence:} The formula $\gamma_{XY}(s) = X + s(Y-X)$ shows that the path depends affinely (hence smoothly) on $Y \in U$.
	\end{proof}
	
	\begin{remark}
		This theorem shows that when using the $H^k$ norm (for $k$ large enough), the space $\Ank{k}(M)$ locally inherits the flat geometry of the ambient Hilbert space $H^k(TM)$. Geodesics are simply straight lines, and the Hofer-like distance $\dAnV{H^k}$ locally coincides with the standard $H^k$ norm distance. This simplifies the local geometry significantly compared to the possibly curved Finsler geometry induced by other norms like $C^0$.
	\end{remark}
	
	\begin{remark}[Implications]
		This result provides a concrete local geometric structure on $(\Ank{k}(M), \dAnV{H^k})$. The most efficient path (locally) between two nearby Anosov vector fields in $H^k$ is simply the linear interpolation between them. This simplifies local analysis and  possibly allows for easier application of variational methods in this Hilbert setting.
	\end{remark}
	
	\subsection{Stability of Lyapunov Exponents}
	
	\begin{proposition}[Lyapunov Exponent Stability under $\dAnV{\Ck{1}}$ Convergence]
		\label{prop:lyapunov_stability}
		Let $\{X_n\}_{n \in \mathbb{N}} \subset \An(\M)$ be a sequence converging to $X \in \An(\M)$ in the $\dAnV{\Ck{1}}$ metric. Let $\mu$ be an ergodic $\varphiup_X$-invariant Borel probability measure. Then there exists a sequence of ergodic $\varphiup_{X_n}$-invariant measures $\{\mu_n\}$ such that $\mu_n \to \mu$ in the weak* topology, and the corresponding Lyapunov exponents converge: If $\lambda_1(\mu) \ge \dots \ge \lambda_d(\mu)$ are exponents for $(X, \mu)$ and $\lambda_1(\mu_n) \ge \dots \ge \lambda_d(\mu_n)$ are for $(X_n, \mu_n)$, then $\lim_{n \to \infty} \lambda_i(\mu_n) = \lambda_i(\mu)$ for all $i=1, \dots, d := \dim(M)$.
	\end{proposition}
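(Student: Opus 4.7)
The plan is to chain several continuity results together: $\dAnV{\Ck{1}}$-convergence yields $\Ck{1}$-convergence of generators via inequality \eqref{eq:norm_vs_dist_Ck_discuss_sec5}, which upgrades to $\Ck{1}$-convergence of flows and uniform convergence of their derivative cocycles on compact time intervals by standard ODE perturbation theory; combining this with the $\Ck{1}$ structural stability of Anosov flows then produces both the sequence $\mu_n$ and the convergence of its Lyapunov spectrum.

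First I would obtain $\normCk{X_n-X}{1} \to 0$ and thereby $\varphiup_{X_n}^t \to \varphiup_X^t$ in $\Ck{1}(M,M)$ uniformly for $t \in [-T,T]$, so that $\diff\varphiup_{X_n}^t \to \diff\varphiup_X^t$ in $\Ck{0}$, and the Anosov subbundles satisfy $E_n^{s/u} \to E^{s/u}$ in the $\Ck{0}$ topology of subbundles. I would then invoke the $\Ck{1}$ structural stability theorem for Anosov flows: for $n$ large there exist H\"older homeomorphisms $h_n : M \to M$ with $h_n \to \Id$ uniformly, together with continuous time reparameterizations $\tau_n$ whose infinitesimal rate $a_n(x) \coloneqq \partial_t \tau_n(x,0)$ converges uniformly to $1$, realizing an orbit equivalence between $\varphiup_X$ and $\varphiup_{X_n}$. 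The orbit equivalence induces a natural bijection of ergodic invariant probability measures, producing an ergodic $\varphiup_{X_n}$-invariant measure $\mu_n$ corresponding to $\mu$, and the uniform convergence of $h_n$ and $a_n$ immediately yields $\int f \, d\mu_n \to \int f \, d\mu$ for every $f \in \Ck{0}(M)$, i.e.\ weak$^*$ convergence.

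For the exponents, the Oseledets decomposition for Anosov flows refines the invariant splitting $E_n^s \oplus \R X_n \oplus E_n^u$. Using $\Ck{0}$-convergence of these subbundles and uniform convergence of the cocycle on compacts, a diagonal argument delivers $\lambda_i(\mu_n) \to \lambda_i(\mu)$: choose $t = t(\epsilon)$ large so that $t^{-1}\log \norm{\diff\varphiup_X^t|_{V_i(x)}}$ is within $\epsilon$ of $\lambda_i(\mu)$ for $\mu$-typical $x$, then $n = n(\epsilon, t)$ large so that the perturbed time-$t$ averages along Oseledets subspaces $V_i^{(n)}(y)$ for $y$ near $h_n(x)$ remain within another $\epsilon$. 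Subadditivity (Kingman) then promotes the time-$t$ estimates to the asymptotic exponent.

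The hard part is the time reparameterization intrinsic to orbit equivalence of flows: the Lyapunov exponents of $(\varphiup_{X_n}, \mu_n)$ are a priori rescaled, relative to those of $(\varphiup_X, \mu)$, by the ergodic average of $a_n$, so the whole argument hinges on $\int a_n \, d\mu \to 1$ with a quantitative rate governed by $\normCk{X_n - X}{1}$. Rigorously extracting this uniform control of $a_n$ from the implicit-function-theoretic construction of $h_n$ in the structural stability theorem is the core analytical content distinguishing the flow setting from the diffeomorphism case, where no such time change appears.
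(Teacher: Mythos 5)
Your route has a genuine gap, in fact two. First, the structural-stability machinery does not do what you need it to do: the orbit equivalence $h_n$ is only a H\"older homeomorphism, so it does not transport Lyapunov exponents at all --- the assertion that the exponents of $(\varphiup_{X_n},\mu_n)$ are those of $(\varphiup_X,\mu)$ rescaled by the ergodic average of $a_n$ would require $h_n$ to be differentiable (a smooth conjugacy up to time change), which fails for generic perturbations; likewise $(h_n)_*\mu$ is not itself $\varphiup_{X_n}$-invariant, and the measure corresponding to $\mu$ under an orbit equivalence is obtained only after reweighting by the time-change density, a step your ``immediately yields weak$^*$ convergence'' glosses over. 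Moreover, you yourself flag the uniform control $\int a_n\,d\mu \to 1$ (with a rate in $\|X_n-X\|_{\Ck{1}}$) as ``the core analytical content'' and leave it unproved, so even on your own terms the argument is incomplete.

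Second, the diagonal/Kingman argument for the exponents does not close: only the coarse splitting $E^s\oplus E^c\oplus E^u$ depends continuously on the generator, whereas the finer Oseledets subspaces $V_i^{(n)}$ are merely measurable in the base point and need not converge uniformly to $V_i$, so the ``choose $t$ large, then $n$ large'' exchange only gives upper semicontinuity of the partial sums $\lambda_1+\dots+\lambda_k$ via subadditivity; lower semicontinuity of the individual $\lambda_i(\mu_n)$ is exactly the delicate point and does not follow from $\Ck{0}$ convergence of the cocycle plus Kingman. The paper sidesteps both difficulties: it produces $\mu_n\to\mu$ weak$^*$ directly from Sigmund's specification theorem \cite{Sigmund70} (no orbit equivalence, hence no time change), and then invokes the joint continuity of $(Y,\nu)\mapsto\lambda_i(Y,\nu)$ on Anosov systems times ergodic measures due to Ma\~n\'e \cite{Mane83}, \cite[Section S.5]{KH95}. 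To repair your proposal you would either have to import that continuity theorem anyway --- which makes the orbit-equivalence and time-change apparatus superfluous --- or supply genuine proofs of the lower semicontinuity of the individual exponents and of the missing control on $a_n$.
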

	
	\begin{proof}
		\textbf{1. $C^1$ Convergence:} $\dAnV{\Ck{1}}(X_n, X) \to 0$ implies $X_n \to X$ in $\Ck{1}$ by inequality \eqref{eq:norm_vs_dist_Ck_discuss_sec5} (with $k=1$).
		
		\textbf{2. Continuity of Anosov Splitting:} The stable/unstable bundles $E^{s/u/c}_{X_n}$ depend continuously on $X_n$ in the $\Ck{1}$ topology \cite[Thm. 18.2.3]{KH95}.
		
		\textbf{3. Convergence of Derivative Cocycles:} The derivative maps $\diff\varphiup_{X_n}^t$ converge to $\diff\varphiup_X^t$ uniformly on compact time intervals in $t$ and uniformly over $M$ because $X_n \to X$ in $\Ck{1}$.
		
		\textbf{4. Existence and Convergence of Measures:} Given an ergodic $\varphiup_X$-invariant measure $\mu$, Sigmund's specification theorem results imply the existence of a sequence of ergodic $\varphiup_{X_n}$-invariant measures $\mu_n$ such that $\mu_n \to \mu$ in the weak* topology as $X_n \to X$ in $\Ck{1}$ \cite{Sigmund70}.
		
		\textbf{5. Continuity of Lyapunov Exponents:} By Oseledets' Multiplicative Ergodic Theorem \cite{Oseledets68}, Lyapunov exponents exist $\mu$-a.e. Ma\~n\'e proved that the Lyapunov exponents $\lambda_i(Y, \nu)$ are continuous functions of $(Y, \nu)$ on the space $(\An(M) \cap \Ck{1}(TM)) \times \ProbMeas_{\mathrm{erg}}(\M)$, where $\ProbMeas_{\mathrm{erg}}$ is the space of ergodic measures equipped with the weak* topology \cite[Section S.5]{KH95}, \cite{Mane83}.
		
		\textbf{6. Conclusion:} Since $X_n \to X$ in $\Ck{1}$ and we found $\mu_n \to \mu$ weak*, the continuity result from Step 5 implies $\lambda_i(\mu_n) \to \lambda_i(\mu)$ for all $i$.
	\end{proof}
	
	\begin{remark}[Implications]
		The $\dAnV{\Ck{1}}$ metric controls the asymptotic expansion/contraction rates (Lyapunov exponents) associated with ergodic measures that converge appropriately. This reinforces the dynamical relevance of using at least the $C^1$ norm in the Hofer-like metric construction for capturing stability features.
	\end{remark}
	
	\subsection{Continuity of SRB Measures and Thermodynamic Quantities}
	
	\begin{theorem}[SRB Measure and Pressure Continuity]
		\label{thm:srb_pressure_continuity}
		Let $\alpha \in (0, 1]$. Assume $\{X_n\} \subset \An(\M) \cap \Ck{2}(\TM)$ converges to $X \in \An(\M) \cap \Ck{2}(\TM)$ in the $\dAnV{\Ck{2}}$ metric (or $\dAnV{\SobolevHk{k}}$ for $k > \dim(M)/2+1+\alpha$). Assume $X$ admits a unique SRB measure $\mu$. Then for $n$ sufficiently large, $X_n$ admits a unique SRB measure $\mu_n$. Furthermore:
		\begin{enumerate}[label=(\roman*)]
			\item $\mu_n \to \mu$ in the weak* topology as $n \to \infty$.
			\item The measure-theoretic entropies converge: $h_{\mu_n}(\varphiup_{X_n}) \to h_{\mu}(\varphiup_X)$.
			\item Let $\{f_n\} \subset C^\alpha(M)$ be a sequence of Hölderpossibles converging to $f \in C^\alpha(M)$ in the $C^\alpha$ norm. Then the topological pressure $P(X_n, f_n)$ converges to $P(X, f)$.
			\item The map $Y \mapsto P(Y, f)$ is locally Lipschitz on $\An(M) \cap \Ck{2}(TM)$ with respect to the $\Ck{2}$ norm (and thus w.r.t. $\dAnV{\Ck{2}}$).
		\end{enumerate}
	\end{theorem}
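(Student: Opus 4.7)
The plan is to use the fundamental norm--distance inequality \eqref{eq:norm_vs_dist_Ck_discuss_sec5} to convert the hypothesis $\dAnV{\Ck{2}}(X_n,X)\to 0$ into standard $\Ck{2}$-convergence $\normCk{X_n-X}{2}\to 0$; in the Sobolev case, Sobolev embedding yields $C^{1,\alpha'}$ convergence of the generators for some $\alpha'>0$, which is sufficient regularity for the $\Ck{2}$-Anosov thermodynamic formalism. Once this reduction is carried out, the openness of $\An(\M)\cap\Ck{2}(\TM)$ together with structural stability guarantees that each $X_n$ (for $n$ large) is a $\Ck{2}$ Anosov generator, so the Bowen--Sinai--Ruelle construction produces a unique SRB measure $\mu_n$.

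For part (i), I would exploit either of two equivalent descriptions of $\mu_n$. The first is the anisotropic transfer-operator framework of Butterley--Liverani, in which $\mu_n$ is the leading eigendistribution of the transfer operator $\TransferOp_{X_n}$; since the spectral gap persists under small $\Ck{2}$ perturbations, Kato-type perturbation theory for isolated simple eigenvalues yields weak-star continuity $\mu_n\to\mu$. The second description identifies $\mu_n$ as the equilibrium state of the geometric potential $\phi_{X_n}^u = -\log|\det D\varphiup_{X_n}^1|_{E^u_{X_n}}|$, which converges to $\phi_X^u$ in $C^\beta$ for some $\beta\in(0,1)$ because the unstable bundle depends Hölder-continuously on the generator in $\Ck{2}$; continuity of equilibrium states for Hölder potentials then delivers the conclusion. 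Part (ii) follows by combining (i) with the Pesin--Ruelle entropy formula $h_{\mu_n}(\varphiup_{X_n}) = \int \sum_{\lambda_i(x)>0}\lambda_i(x)\, d\mu_n(x)$ and the Lyapunov-exponent continuity of Proposition~\ref{prop:lyapunov_stability}, using the uniform integrability furnished by the $C^1$-continuity of the Anosov splitting.

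For part (iii), I would split
\[ |P(X_n, f_n)-P(X,f)| \le |P(X_n,f_n)-P(X_n,f)| + |P(X_n,f)-P(X,f)|. \]
The first term is at most $\normCzero{f_n-f}\le C\|f_n-f\|_{C^\alpha}\to 0$ by the elementary one-Lipschitz dependence of pressure on the potential in $C^0$, immediate from the variational principle. The second term tends to zero by continuity of topological pressure in the dynamics within the $\Ck{2}$ topology for Hölder potentials, a standard consequence of the Bowen--Sinai--Ruelle formalism that can be seen either through continuity of the leading eigenvalue of the $f$-twisted transfer operator or via the continuous dependence of closed-orbit sums on the generator.

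The main obstacle is part (iv), the local Lipschitz property, which cannot be reduced to a pure continuity argument. My plan is to invoke the linear response theory for $\Ck{2}$ Anosov flows, developed by Ruelle and Butterley--Liverani and sharpened by Giulietti--Liverani--Pollicott and Dang--Guillarmou--Rivière--Shen: for any $C^1$ family $\{Y_t\}$ of $\Ck{2}$ Anosov generators,
\[ \frac{d}{dt}P(Y_t,f) = \langle \Phi(Y_t,f), \dot{Y}_t\rangle, \]
where $\Phi(Y,f)$ is a distribution pairing continuously with $\Ck{2}(\TM)$ whose dual norm is uniformly bounded on a small $\Ck{2}$-neighborhood $U$ of $X$. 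Integrating this identity along an admissible path $Y_t$ from $X_1$ to $X_2$ in $U$ and taking the infimum over such paths produces $|P(X_1,f)-P(X_2,f)| \le L\,\dAnV{\Ck{2}}(X_1,X_2)$, and hence also $L\,\normCk{X_1-X_2}{2}$ by the norm inequality. The technically delicate step is establishing the uniform bound on $\|\Phi(Y,f)\|_*$; this reduces to a uniform lower bound on the spectral gap of $\TransferOp_Y$ as $Y$ varies in $U$, which is the quantitative counterpart of structural stability at the level of transfer operators.
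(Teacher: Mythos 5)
Your overall architecture matches the paper's for parts (i)--(iii): both reduce $\dAnV{\Ck{2}}$-convergence to $\Ck{2}$ (hence $C^{1,\alpha}$) convergence via the norm--distance inequality, invoke the classical existence/uniqueness and stability theory for SRB measures of $\Ck{2}$ Anosov flows, obtain (i) from perturbation of the leading eigendata of the transfer operator (the paper simply cites continuity of $Y\mapsto\mu_Y$ in the $C^2$ topology), and obtain (ii) from Pesin's formula combined with Proposition~\ref{prop:lyapunov_stability} and weak* convergence. For (iii) your decomposition is a mild variant: you dispose of the potential variation by the elementary bound $|P(Y,f_n)-P(Y,f)|\le \|f_n-f\|_{C^0}$ from the variational principle and treat only the variation in the dynamics through the transfer operator, whereas the paper invokes joint continuity of $(Y,g)\mapsto\TransferOp_{Y,g}$ and of the spectral radius; both work, and your split is slightly more elementary on the potential side. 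For (iv) you take a genuinely different route: instead of citing spectral perturbation results (Ruelle, Katok--Knieper--Weiss) giving Lipschitz dependence of the leading eigenvalue on the generator in the $\Ck{2}$ norm, you differentiate the pressure along $C^1$ families (linear response) and integrate a uniformly bounded derivative along paths; this is consistent with the paper's later Theorem~\ref{thm:pressure_diff} and is acceptable at the same level of rigor, provided the uniform bound on the response functional over a $\Ck{2}$-neighborhood is justified, which is precisely the uniform-spectral-gap point you flag.

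Two details in your part (iv) need repair. First, your final deduction runs backwards: from $|P(X_1,f)-P(X_2,f)|\le L\,\dAnV{\Ck{2}}(X_1,X_2)$ you cannot conclude the $\Ck{2}$-norm Lipschitz bound ``by the norm inequality'', because \eqref{eq:norm_vs_dist_Ck_discuss_sec5} states $\normCk{X_1-X_2}{2}\le \dAnV{\Ck{2}}(X_1,X_2)$, i.e.\ the metric dominates the norm, not the reverse; the theorem asserts the (stronger) norm-Lipschitz property, which the paper establishes first and then transfers to the metric. The fix within your framework is immediate: integrate the linear response identity along the straight segment $t\mapsto X_1+t(X_2-X_1)$, which stays in the neighborhood by openness of $\An(\M)\cap\Ck{2}(\TM)$, yielding the bound $L\,\normCk{X_1-X_2}{2}$ directly (equivalently, note that for nearby points the straight segment is admissible, so locally $\dAnV{\Ck{2}}$ and the $\Ck{2}$-norm distance coincide). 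Second, when you take the infimum over paths ``in $U$'', that constrained infimum could a priori exceed $\dAnV{\Ck{2}}(X_1,X_2)$; add the standard remark that any admissible path of length at most $\dAnV{\Ck{2}}(X_1,X_2)+\epsilon$ starting at $X_1$ remains within that $\Ck{2}$-norm distance of $X_1$ (apply the norm--distance inequality to subpaths), hence inside $U$ once the endpoints are metrically close, so near-optimal paths are automatically confined to $U$.
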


	\begin{proof}
		\textbf{1. $C^{1+\alpha}$ Convergence (Implied):} $\dAnV{\Ck{2}}(X_n, X) \to 0 \implies X_n \to X$ in $\Ck{2}$ by \eqref{eq:norm_vs_dist_Ck_discuss_sec5}. Since $C^2(TM) \hookrightarrow C^{1,\alpha}(TM)$ for any $\alpha \in (0, 1]$, we have $X_n \to X$ in $\Ck{1,\alpha}$. (The Sobolev case $k > \dim(M)/2+1+\alpha$ implies $H^k \hookrightarrow C^{1,\alpha}$ by Sobolev embedding \cite{AdamsFournier03}). This regularity is sufficient for the transfer operator theory used below.
		
		\textbf{2. SRB Existence/Uniqueness/Stability:} For $C^2$ (or $C^{1+\alpha}$) Anosov flows, the existence and uniqueness of the SRB measure $\mu$ is a classical result \cite{Sinai72, Ruelle76, Bowen75}. The stability of the SRB measure under $C^1$ (or slightly stronger) perturbations ensures that $\mu_n$ exists and is unique for $X_n$ sufficiently close to $X$ \cite{Young02}.
		
		\textbf{3. Part (i) Weak* Convergence:} The map $Y \mapsto \mu_Y$ assigning the SRB measure to a $C^2$ Anosov flow $Y$ is known to be continuous with respect to the $C^2$ topology on vector fields and the weak* topology on measures \cite{Dolgopyat04, CT17}. Since $X_n \to X$ in $C^2$, it follows that $\mu_n \to \mu$ weak*.
		
		\textbf{4. Part (ii) Entropy Convergence:} By Pesin's entropy formula, $h_\nu(\varphiup_Y) = \int \sum_{\lambda_i(x,Y)>0} \lambda_i(x,Y) d\nu(x)$ for the SRB measure $\nu$. From Proposition \ref{prop:lyapunov_stability}, the Lyapunov exponents $\lambda_i(x,Y)$ depend continuously on $(Y, \nu)$. Combined with the weak* convergence $\mu_n \to \mu$, this yields $h_{\mu_n}(\varphiup_{X_n}) \to h_{\mu}(\varphiup_X)$.
		
		\textbf{5. Part (iii) Pressure Convergence:} Topological pressure $P(Y, g)$ can be defined via the spectral radius of a transfer operator $\TransferOp_{Y, g}$ acting on a suitable Banach space (e.g., $C^\alpha(M)$) \cite{Baladi00}. The map $(Y, g) \mapsto \TransferOp_{Y, g}$ is continuous from $\Ck{1,\alpha}(TM) \times C^\alpha(M)$ to the space of bounded operators $\mathcal{B}(C^\alpha(M))$ \cite{Baladi00}. Since $X_n \to X$ in $\Ck{1, \alpha}$ (Step 1), and $f_n \to f$ in $C^\alpha$ (by assumption), then $\TransferOp_{X_n, f_n} \to \TransferOp_{X, f}$ in operator norm. Pressure is given by $P(Y,g) = \log(\text{spectral radius of } \TransferOp_{Y,g})$. The spectral radius is continuous with respect to the operator norm for operators satisfying appropriate spectral gap properties, which hold here \cite{Kato95, KellerLiverani}. Thus $P(X_n, f_n) \to P(X, f)$.
		
		\textbf{6. Part (iv) Lipschitz Continuity:} For $C^2$ dynamics and $C^\alpha$possibles, spectral perturbation theory yields Lipschitz continuity of the leading eigenvalue (and hence the pressure) with respect to perturbations measured in the $C^2$ norm \cite{Ruelle89, KKW91}. Since $\normCk{X-Y}{2} \le \dAnV{\Ck{2}}(X,Y)$ by inequality \eqref{eq:norm_vs_dist_Ck_discuss_sec5}, the local Lipschitz continuity also holds with respect to the $\dAnV{\Ck{2}}$ metric.
	\end{proof}
	
	\begin{remark}[Implications]
		This theorem connects the metric geometry (using $\dAnV{\Ck{2}}$ or a strong enough Sobolev metric) to key statistical properties (SRB measure, its entropy, and the pressure functional). The continuity ensures that small deformations in this metric space lead to small changes in these fundamental characteristics. Furthermore, the local Lipschitz continuity provides quantitative control over the variation of pressure, strengthening the link between the metric and dynamical stability.
	\end{remark}
	
	\subsection{Convergence of Periodic Orbit Data and Zeta Functions}
	
	\begin{theorem}[Periodic Orbits and Zeta Functions]
		\label{thm:periodic_zeta}
		Let $r \ge 1$. Let $\{X_n\} \subset \An(\M) \cap \Ck{r}(TM)$ converge to $X \in \An(\M) \cap \Ck{r}(TM)$ in the $\dAnV{\Ck{r}}$ metric. Then:
		\begin{enumerate}[label=(\roman*)]
			\item For any periodic orbit $\gamma$ of $\varphiup_X$ with period $T$, there exists $N$ such that for all $n \ge N$, $\varphiup_{X_n}$ has a unique periodic orbit $\gamma_n$ near $\gamma$. The period $T_n$ converges to $T$.
			\item Let $P_\gamma = \diff\varphiup_X^T|_{\gamma(0)}$ and $P_{\gamma_n} = \diff\varphiup_{X_n}^{T_n}|_{\gamma_n(0)}$ be the linearized Poincaré return maps (restricted to directions transversal to the flow). Then the eigenvalues of $P_{\gamma_n}$ converge to the eigenvalues of $P_\gamma$.
			\item If $X_n \to X$ in $\Ck{1,\alpha}(TM)$ for some $\alpha>0$ (which is implied if $r \ge 2$ and convergence is in $\dAnV{\Ck{r}}$), then the Ruelle zeta function $\zetafun_{X_n}(s)$ converges to $\zetafun_X(s)$ uniformly on compact subsets of some right half-plane $\{\Realpart(s) > c\}$ where $\zeta_X(s)$ is analytic and non-zero.
		\end{enumerate}
	\end{theorem}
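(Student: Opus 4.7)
The plan is to treat the three parts sequentially, leveraging the implication $\dAnV{\Ck{r}}(X_n,X)\to 0 \Rightarrow \normCk{X_n-X}{r}\to 0$ from inequality \eqref{eq:norm_vs_dist_Ck_discuss_sec5}, together with the classical continuous dependence of flows and their derivatives on generating vector fields in the $\Ck{r}$ topology.

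For part (i), I would fix $x_0=\gamma(0)$ and a smooth transversal section $\Sigma$ of codimension one to $X$ through $x_0$. For any $Y$ sufficiently $\Ck{1}$-close to $X$, the first-return map $P_Y:\Sigma\to\Sigma$ is well-defined near $x_0$, with $\Ck{r-1}$ dependence on $Y$ in the $\Ck{r}$ topology, obtained by applying the implicit function theorem to the equation $\varphiup^t_Y(x)\in\Sigma$ to solve for the return time $t$. Hyperbolicity of $\gamma$ implies that $\diff P_X(x_0)-\Id$ is invertible on $T_{x_0}\Sigma$. Applying the implicit function theorem to $F(Y,x):=P_Y(x)-x$ at $(X,x_0)$ then produces a unique fixed point $x_n$ of $P_{X_n}$ near $x_0$ for $n$ large, hence a unique nearby periodic orbit $\gamma_n$, with continuously varying return time $T_n\to T$.

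For part (ii), the linearized Poincaré map $P_{\gamma_n}$ is the restriction of $\diff\varphiup^{T_n}_{X_n}(x_n)$ to a transversal to the flow (that is, on the quotient by $\R X_n(x_n)$). Standard ODE theory yields continuity of $(t,x,Y)\mapsto\diff\varphiup^t_Y(x)$ when $Y$ varies in $\Ck{r}(TM)$ with $r\ge 1$, uniformly for $(t,x)$ in compact sets. Together with $T_n\to T$ and $x_n\to x_0$ from part (i), and a continuous local frame transversal to the flow, this gives $P_{\gamma_n}\to P_\gamma$ as matrices in a fixed finite-dimensional space. Continuity of eigenvalues with respect to matrix entries (through the roots of the characteristic polynomial) then delivers the desired convergence of spectra.

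For part (iii), the cleanest approach splits into two regimes. In the half-plane $\Realpart(s)>\htop(X)$ the Euler product
\[
\zetafun_Y(s)=\prod_{\gamma\text{ primitive}}\bigl(1-e^{-sT_\gamma}\bigr)^{-1}
\]
converges absolutely and uniformly on compact subsets; one obtains $\zetafun_{X_n}(s)\to\zetafun_X(s)$ by combining uniform orbit-counting bounds controlled by topological entropy (continuous in our setting by Theorem~\ref{thm:entropy_continuity}) with the orbit persistence and multiplier convergence from parts (i)--(ii), applied uniformly to all primitive orbits of period at most $T_0$ and then letting $T_0\to\infty$. For meromorphic continuation beyond this region, one invokes the representation of $\zetafun_Y$ as an alternating product of Fredholm determinants of transfer operators on anisotropic Banach spaces adapted to the Anosov splitting (Giulietti--Liverani--Pollicott, Dyatlov--Zworski), together with operator-norm continuity of these transfer operators in the $\Ck{1,\alpha}$ topology on the generator, which follows from $\dAnV{\Ck{r}}$-convergence for $r\ge 2$ via the Sobolev-style embedding $\Ck{r}\hookrightarrow\Ck{1,\alpha}$. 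The main obstacle sits here: one must verify that a single anisotropic space, with a single choice of escape weights, works for an entire $\Ck{1,\alpha}$-neighborhood of $X$ and that the Fredholm determinants depend continuously on the operator data in nuclear norm. This is delicate but available from the microlocal literature; fortunately the statement only requires convergence on a right half-plane where $\zetafun_X$ is analytic and non-zero, so the elementary Euler-product route suffices and sidesteps the hardest part of the anisotropic machinery.
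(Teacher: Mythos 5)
Your treatment of parts (i) and (ii) is essentially the paper's argument: the paper also reduces $\dAnV{\Ck{r}}$-convergence to $\Ck{r}$-norm convergence via \eqref{eq:norm_vs_dist_Ck_discuss_sec5} and then applies the Implicit Function Theorem, only it works directly with the map $(Y,y,\tau)\mapsto \varphiup_Y^\tau(y)-y$ rather than with a Poincar\'e section and its first-return map; these are interchangeable formulations of the same persistence argument, and your eigenvalue-continuity step matches the paper's. For part (iii), however, you take a genuinely different route. The paper goes through the transfer-operator representation $\zetafun_Y(s)^{-1}=\det(\Id-\TransferOp_Y(s))$, continuity of $Y\mapsto \TransferOp_Y(s)$ from $\Ck{1,\alpha}$ into trace-class operators uniformly on compacts in $s$, and continuity of the Fredholm determinant in trace norm; you instead work in the Euler-product region $\Realpart(s)>\htop(X)$, combining orbit persistence and period convergence from (i)--(ii) with entropy continuity and orbit-counting tail bounds. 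Your route is more elementary and, as you note, suffices for the statement as written (some right half-plane where $\zetafun_X$ is analytic and non-zero), whereas the paper's operator route is the one that interfaces with the rest of its spectral machinery and would extend to regions beyond the entropy half-plane. The price of your route is the step you gloss over: you need (a) a matching, for $n$ large, between \emph{all} primitive orbits of $\varphiup_{X_n}$ of period at most $T_0$ and those of $\varphiup_X$ --- local persistence gives existence and uniqueness near each $X$-orbit, but ruling out spurious $X_n$-orbits requires structural stability or expansivity-plus-shadowing with constants uniform in a $C^1$-neighborhood of $X$ --- and (b) a tail estimate $\pi_{X_n}(T)\le C e^{(\htop(X)+\epsilon)T}$ with $C$ uniform in $n$, which does not follow from convergence of $\htop(X_n)$ alone but does follow from uniform hyperbolicity constants on a $C^1$-neighborhood. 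Both ingredients are standard and available, so the approach is sound, but they should be stated explicitly since they carry the uniformity on which the "let $T_0\to\infty$" step depends; the paper's sketch carries a comparable burden (uniform trace-norm control of $\TransferOp_{X_n}(s)$ on compacts in $s$), so neither argument is fully detailed, but yours needs these two points named to be complete.
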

	
	\begin{proof}
		\textbf{1. $C^r$ Convergence:} $\dAnV{\Ck{r}}(X_n, X) \to 0 \implies X_n \to X$ in $\Ck{r}$ by inequality \eqref{eq:norm_vs_dist_Ck_discuss_sec5}.
		
		\textbf{2. Parts (i) and (ii): Orbit Persistence/Convergence:} A periodic orbit $\gamma$ for $X$ at $x_0=\gamma(0)$ with period $T$ satisfies $\varphiup_X^T(x_0) = x_0$. The existence and uniqueness of a nearby orbit $\gamma_n$ for $X_n$ follows from applying the Implicit Function Theorem to the map $(Y, y, \tau) \mapsto \varphiup_Y^\tau(y) - y$ near $(X, x_0, T)$, using the hyperbolicity of $\gamma$ (which ensures the derivative w.r.t. $y$ transversal to the flow is invertible) \cite{PalisMelo82}. The theorem guarantees that the solution $(y_n, T_n)$ exists, is unique locally, and depends continuously (in fact, $C^{r-1}$) on $X_n$ (in the $C^r$ topology). Thus $T_n \to T$, and $\gamma_n \to \gamma$. The linearized Poincaré map $P_{\gamma_n}$ depends continuously on $(X_n, T_n, y_n)$, so its eigenvalues converge to those of $P_\gamma$.
		
		\textbf{3. Part (iii): Zeta Function Convergence (requires $C^{1+\alpha}$):} Ruelle zeta functions for $C^{1+\alpha}$ Anosov flows can be expressed via Fredholm determinants of transfer operators, e.g., $\zetafun_Y(s)^{-1} = \det(\Id - \TransferOp_Y(s))$ for a suitable operator $\TransferOp_Y(s)$ acting on an appropriate Banach space \cite{PP90, Baladi00}. The map $Y \mapsto \TransferOp_Y(s)$ is continuous (often analytic) from $\Ck{1+\alpha}(TM)$ to the space of trace class operators (or operators with suitable decay properties), uniformly on compact sets of $s$ in a half-plane \cite{Baladi00, Liverani04}. The convergence $X_n \to X$ in $\Ck{1,\alpha}$ (implied by $\Ck{r}$ convergence for $r \ge 2$) implies $\TransferOp_{X_n}(s) \to \TransferOp_X(s)$ in trace norm (or relevant operator norm), uniformly on compacts in $s$. The Fredholm determinant is continuous with respect to the trace norm \cite{Simon05}. Hence $\det(\Id - \TransferOp_{X_n}(s)) \to \det(\Id - \TransferOp_X(s))$ uniformly on compacts where the operators are defined and the determinant is non-zero. This implies $\zetafun_{X_n}(s) \to \zetafun_X(s)$ uniformly on compact subsets away from the poles/zeros.
	\end{proof}
	
	\begin{remark}[Implications]
		This theorem shows that metrics based on sufficiently strong norms (like $\dAnV{\Ck{r}}$, $r \ge 1$) respect the underlying periodic orbit structure of Anosov flows. Furthermore, convergence in metrics implying $C^{1+\alpha}$ regularity (like $\dAnV{\Ck{2}}$) ensures the convergence of the Ruelle zeta function, linking the metric geometry to important analytic objects encoding spectral and periodic orbit information.
	\end{remark}
	
	\subsection{Quantitative Stability of Mixing Rates and SRB Dimension}
	
	We strengthen the continuity results to Lipschitz stability by combining known regularity properties of spectral data and dimension formulas with respect to stronger Banach norms, with the Hofer-like metric definition.
	
	\begin{theorem}[Lipschitz Stability of Mixing Rate and Dimension]
		\label{thm:mixing_dimension_stability}
		Let $k \ge 2$. Let $\Ank{k}(M) = \An(M) \cap C^k(TM)$ be equipped with the metric $\dAnV{\Ck{k}}$. Assume for $X \in \Ank{k}(M)$, $\varphiup_X^t$ admits a unique SRB measure $\mu_X$, a transfer operator $\TransferOp_X$ (on a suitable space like $C^\alpha(M)$) with spectral gap $\delta_X > 0$ below the leading eigenvalue $\lambda_X=1$ (for normalized operator), and an SRB dimension $D_X = \dim_{\mathrm{SRB}}(\mu_X)$ characterized by $P_X(-D_X \psi_X) = 0$ where $\psi_X$ relates to the unstable Jacobian.
		Then, for any $X_0 \in \Ank{k}(M)$, there exist a neighborhood $U$ of $X_0$ in $(\Ank{k}(M), \dAnV{\Ck{k}})$ and constants $L_1, L_2 > 0$ such that for all $X, Y \in U$:
		\begin{enumerate}[label=(\roman*)]
			\item $|\delta_X - \delta_Y| \leq L_1 \cdot \dAnV{\Ck{k}}(X, Y)$.
			\item $|D_X - D_Y| \leq L_2 \cdot \dAnV{\Ck{k}}(X, Y)$.
		\end{enumerate}
	\end{theorem}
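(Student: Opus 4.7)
The plan is to reduce both Lipschitz bounds to local Lipschitz continuity in the standard $\Ck{k}$ norm via the fundamental inequality \eqref{eq:norm_vs_dist_Ck_discuss_sec5}, and then invoke spectral perturbation theory together with an implicit function argument based on the Bowen equation.

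\textbf{Reduction to norm Lipschitz continuity.} For any $X, Y \in \Ank{k}(M)$, inequality \eqref{eq:norm_vs_dist_Ck_discuss_sec5} yields $\normCk{X-Y}{k} \leq \dAnV{\Ck{k}}(X,Y)$. Therefore it suffices to establish local Lipschitz continuity of $X \mapsto \delta_X$ and $X \mapsto D_X$ with respect to the $\Ck{k}$ norm, since any Lipschitz constant for the norm-based bound immediately transfers with the same constant to the $\dAnV{\Ck{k}}$-based bound.

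\textbf{Part (i): Spectral gap.} The approach is to regard the normalized transfer operator $\TransferOp_X$ (acting on a suitable anisotropic or Hölder space adapted to the Anosov splitting) as depending smoothly on the generator $X$ in the $\Ck{k}$ topology for $k \geq 2$. Under the hypothesis that $X_0$ has a spectral gap $\delta_{X_0} > 0$, the leading eigenvalue $1$ is isolated and simple in $\Spectrum(\TransferOp_{X_0})$. I would invoke the Keller--Liverani perturbation theorem \cite{KellerLiverani} (or equivalently Kato's analytic perturbation theory \cite{Kato95} where the continuity estimates apply), which provides that both the leading eigenvalue and the second-largest spectral radius component vary Lipschitz-continuously in the operator norm of the perturbation on the relevant function space. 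Combined with the fact that $\|\TransferOp_X - \TransferOp_Y\|_{\mathrm{op}} \leq C \normCk{X-Y}{k}$ for $X,Y$ in a $\Ck{k}$-neighborhood of $X_0$ (a consequence of the smooth dependence of flows and their generators on parameters), one obtains $|\delta_X - \delta_Y| \leq L_1' \normCk{X-Y}{k}$, which then gives (i) via the reduction above.

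\textbf{Part (ii): SRB dimension.} The idea is to apply the implicit function theorem to the Bowen equation $F(X, D) := P_X(-D\psi_X) = 0$, viewed as an equation for $D$ as a function of $X$ near $X_0$. From Theorem \ref{thm:srb_pressure_continuity}(iv), the pressure $(Y,g) \mapsto P(Y,g)$ is locally Lipschitz on $\Ank{k}(M) \times C^\alpha(M)$ when $k \geq 2$. Moreover, the geometric potential $\psi_X = \log|\det D\varphi_X^1|_{E^u_X}|$ depends Lipschitz-continuously in $C^\alpha$ on $X$ in $C^k$, because the unstable distribution $E^u_X$ and the linearized cocycle vary Hölder-continuously in $X$ (with Lipschitz control from $C^k$ regularity, $k \geq 2$). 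The partial derivative $\partial_D F(X_0, D_{X_0}) = -\int \psi_{X_0} \, d\mu_{-D_{X_0}\psi_{X_0}}$ is nonzero (strictly negative, since $\psi_{X_0} > 0$ on the unstable directions by hyperbolicity), so the implicit function theorem in the Lipschitz category yields $|D_X - D_Y| \leq L_2' \normCk{X-Y}{k}$ on a neighborhood.

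\textbf{Main obstacle.} The principal difficulty lies in verifying the quantitative Lipschitz dependence of $\TransferOp_X$ on $X$ in the operator norm on an anisotropic Banach space sufficiently adapted that (a) the transfer operator has a spectral gap and (b) $X \mapsto \TransferOp_X$ is actually Lipschitz (not merely continuous) in the operator norm there. The Keller--Liverani framework is typically formulated for a weak continuity condition (triple-norm perturbation), and upgrading this to genuine Lipschitz Hölder regularity of the isolated spectral data requires the stronger $\Ck{k}$ ($k \geq 2$) control on the generator, invoking results such as those of Gouëzel--Liverani or Butterley--Liverani on smooth transfer operator calculus for Anosov flows. Once this regularity is in place, both (i) and the implicit-function argument for (ii) follow by standard Banach space techniques.
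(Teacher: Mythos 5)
Your proposal follows essentially the same route as the paper's proof: both parts are reduced to local Lipschitz continuity in the $\Ck{k}$ norm via inequality \eqref{eq:norm_vs_dist_Ck_discuss_sec5}, part (i) is handled by spectral perturbation theory (Kato/Keller--Liverani) applied to the Lipschitz dependence of $\TransferOp_X$ on $X$, and part (ii) by an implicit function argument on the Bowen equation $P_X(-D\psi_X)=0$ with the nonvanishing (negative) $D$-derivative. Your explicit identification of $\partial_D F$ and your remark on the need to upgrade Keller--Liverani continuity to genuine Lipschitz control on anisotropic spaces are slightly more careful than the paper's citations, but they do not constitute a different argument.
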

	
	\begin{proof}
		The proof demonstrates local Lipschitz continuity of $\delta_X$ and $D_X$ with respect to the $\Ck{k}$ norm, which implies the result via $\normCk{X-Y}{k} \le \dAnV{\Ck{k}}(X,Y)$ (inequality \eqref{eq:norm_vs_dist_Ck_discuss_sec5}).
		
		\textbf{1. Setup and Preliminaries:} We work on $\Ank{k}(M)$ with the metric $\dAnV{\Ck{k}}$ ($k \ge 2$). Convergence in this metric implies $\Ck{k}$ norm convergence. Assume existence of $\mu_X$, $\delta_X$, and $D_X$ characterized as stated. Regularity $k \ge 2$ ensures sufficient smoothness for transfer operators and pressure functions \cite{Baladi00, KH95}.
		
		\textbf{2. Mixing Rate Stability (Spectral Gap $\delta_X$):}
		The transfer operator $\TransferOp_X$ depends continuously (often smoothly or Lipschitzly) on $X \in C^k(TM)$ ($k \ge 2$) when viewed as an operator on a suitable Banach space $\mathcal{B}$ (e.g., anisotropic Sobolev/Hölder spaces) \cite{BaladiTsujii07, BL07}. Spectral perturbation theory \cite{Kato95, KellerLiverani} for hyperbolic systems implies that the spectral gap $\delta_X$ (distance from the eigenvalue 1 to the rest of the spectrum, assuming normalization) also depends locally Lipschitz continuously on $X$ in the $\Ck{k}$ norm (possibly requiring $k$ large enough for the specific operator construction): there exists $L_1' > 0$ such that $|\delta_X - \delta_Y| \le L_1' \normCk{X-Y}{k}$ for $X, Y$ in a $C^k$-neighborhood. Combining with inequality \eqref{eq:norm_vs_dist_Ck_discuss_sec5}:
		\[ |\delta_X - \delta_Y| \le L_1' \normCk{X-Y}{k} \le L_1' \dAnV{\Ck{k}}(X, Y). \]
		Setting $L_1 = L_1'$ proves part (i).
		
		\textbf{3. Fractal Dimension Stability ($D_X$):}
		$D_X$ is defined implicitly by $F(Y, D) = P_Y(-D \psi_Y) = 0$. Thepossible $\psi_Y \approx -\log |\det(d\varphiup_Y|E^u)|$ depends smoothly ($C^{k-1}$) on $Y \in C^k$. The pressure function $(Y, f) \mapsto P_Y(f)$ is known to be at least $C^1$ w.r.t. $Y \in C^k$ ($k \ge 2$) and $f \in C^\alpha$ \cite{KKW91, Ruelle89}. Thus $F(Y, D)$ is $C^1$ in $Y$ (in $C^k$ norm) and $D$. The partial derivative $\frac{\partial F}{\partial D}|_{(X, D_X)}$ is related to the variance of $\psi_X$ and is typically non-zero for hyperbolic systems (it is negative). By the Implicit Function Theorem, the solution $D_Y$ is a $C^1$ function of $Y$ in the $C^k$ topology locally. $C^1$ dependence implies local Lipschitz continuity: there exists $L_2' > 0$ such that $|D_X - D_Y| \le L_2' \normCk{X-Y}{k}$. Combining with inequality \eqref{eq:norm_vs_dist_Ck_discuss_sec5}:
		\[ |D_X - D_Y| \le L_2' \normCk{X-Y}{k} \le L_2' \dAnV{\Ck{k}}(X, Y). \]
		Setting $L_2 = L_2'$ proves part (ii).
		
		\textbf{4. Conclusion:} Both estimates hold simultaneously in a neighborhood $U$ by taking the intersection of the neighborhoods required for each part.
	\end{proof}
	
	\begin{remark}[Implications]
		This theorem shows that not only do mixing rates and fractal dimensions persist under small deformations (measured by $\dAnV{\Ck{k}}$, $k\ge 2$), but their variation is directly controlled by the metric distance. This reinforces the idea that the Hofer-like metric captures dynamically relevant distances.
	\end{remark}
	
	\subsection{Differentiability of Thermodynamic Quantities}
	
	We now establish the differentiability of key thermodynamic quantities, linking the geometry of the space $\An(M)$ to linear response theory.
	
	\begin{theorem}[Differentiability of Topological Pressure]
		\label{thm:pressure_diff}
		Let $k \ge 2$. Let $\Ank{k}(M) = \An(M) \cap C^k(TM)$ be the smooth Banach manifold modelled on $C^k(TM)$. Let $\varphi \in C^\alpha(M)$ (with $\alpha>0$, or smoother, e.g., $C^{k-1}$) be apossible. Then the map assigning to each $X \in \Ank{k}(M)$ its topological pressure $P_X(\varphi)$,
		\[ P(\cdot, \varphi): \Ank{k}(M) \to \mathbb{R}, \]
		is Fréchet differentiable. If $\{X_t\}_{t \in (-\epsilon, \epsilon)}$ is a smooth curve in $\Ank{k}(M)$ with $X_0=X$ and tangent vector $\dot{X}_0 = \left.\frac{d}{dt}\right|_{t=0} X_t \in T_X\Ank{k}(M) \approx C^k(TM)$, then the directional derivative is given by:
		\[ \frac{d}{dt} P_{X_t}(\varphi)\Big|_{t=0} = DP(X, \varphi)[\dot{X}_0] = \int_M Q(X, \varphi, \dot{X}_0) \, d\mu_{X, \varphi}, \]
		where $\mu_{X, \varphi}$ is the unique equilibrium state for $(X, \varphi)$, and $Q(X, \varphi, \dot{X}_0)$ is an explicitly computable observable derived from the first-order variation of the dynamics andpossible due to the perturbation $\dot{X}_0$.
	\end{theorem}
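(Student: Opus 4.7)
The plan is to realize $P_X(\varphi)$ as the logarithm of the leading eigenvalue of a transfer operator $\TransferOp_{X,\varphi}$, and then apply analytic perturbation theory to transfer Fréchet differentiability of the operator-valued map $X \mapsto \TransferOp_{X,\varphi}$ into differentiability of the leading eigenvalue. Concretely, fix a suitable anisotropic Banach space $\mathcal{B}$ (of the Baladi--Tsujii or Liverani--Gou\"ezel type) on which, for each $X$ in a $\Ck{k}$-neighborhood of $X_0$, the transfer operator $\TransferOp_{X,\varphi}$ associated with the flow $\varphiup_X$ and thepossible $\varphi$ is bounded, has simple leading eigenvalue $\lambda(X) = e^{P_X(\varphi)}$, and has a spectral gap isolating this eigenvalue from the rest of $\Spectrum(\TransferOp_{X,\varphi})$. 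In practice one either works with a resolvent $R_X(z)=(z-\mathcal{A}_{X,\varphi})^{-1}$ of the generator $\mathcal{A}_{X,\varphi}$, or passes through a Markov section / Poincar\'e return construction so that the leading eigenvalue of a discrete operator encodes the pressure; either way, the existence of a spectral gap is available from the results cited in the proof of Theorem~\ref{thm:srb_pressure_continuity}.

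Next, I would show that $X \mapsto \TransferOp_{X,\varphi}$ is Fr\'echet differentiable from $\Ck{k}(\TM)$ into $\mathcal{L}(\mathcal{B})$. Since $\TransferOp_{X,\varphi}$ is built out of the flow $\varphiup^t_X$ and integrals of $\varphi\circ\varphiup^s_X$, smoothness in $X$ reduces to smoothness of the flow in its generating vector field, which for $k\ge 2$ is known to be $\Ck{k-1}$ on bounded time intervals. Explicit differentiation of the cocycle $\int_0^t \varphi(\varphiup^s_X(x))\,\diff s$ and of the Jacobian weights appearing in $\TransferOp_{X,\varphi}$ yields a bounded linear map $D\TransferOp_{X,\varphi}\colon \Ck{k}(\TM) \to \mathcal{L}(\mathcal{B})$, with $D\TransferOp_{X,\varphi}[Y]$ expressible in terms of $Y$, the infinitesimal generator of the variation $\partial_X \varphiup^t_X$, and thepossible $\varphi$. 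Applying Kato--Rellich perturbation theory for isolated simple eigenvalues then gives that $\lambda(X)$ is Fr\'echet differentiable on a $\Ck{k}$-neighborhood of $X_0$, with
\[
D\lambda(X)[\dot{X}_0] \;=\; \langle \ell_X, \, D\TransferOp_{X,\varphi}[\dot{X}_0]\, h_X \rangle,
\]
where $h_X$ and $\ell_X$ are the (normalized) right and left leading eigenvectors of $\TransferOp_{X,\varphi}$. Dividing by $\lambda(X)$ and using $P_X(\varphi) = \log\lambda(X)$ produces the derivative of the pressure.

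The final step is to identify this abstract pairing with integration against the equilibrium state $\mu_{X,\varphi}$, whose defining property is precisely that it is represented by the product $h_X\,\ell_X$ in the standard normalization $\ell_X(h_X)=1$ and $\int h_X\,d\ell_X = 1$. Writing the action of $D\TransferOp_{X,\varphi}[\dot{X}_0]$ through its explicit integral kernel allows one to re-express the pairing as $\int_M Q(X,\varphi,\dot{X}_0)\,d\mu_{X,\varphi}$, where the observable $Q$ collects the contributions from (a) the variation of thepossible along perturbed orbits, of the form $\int_0^{\tau} (\diff\varphi)\cdot(\partial_X\varphiup^s_X\cdot \dot{X}_0)\,\diff s$, and (b) the variation of the unstable Jacobian weight, contributing a term involving $\mathrm{div}_{E^u}(\dot{X}_0)$ or equivalently a coboundary-type expression. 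Combining, we get the claimed linear response formula, and since $D\lambda$ is continuous in $X$ in the $\Ck{k}$-operator norm topology (and hence in $\dAnV{\Ck{k}}$ by \eqref{eq:norm_vs_dist_Ck_discuss_sec5}), Fr\'echet differentiability of $P(\cdot,\varphi)$ on $\Ank{k}(M)$ follows.

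The main technical obstacle will be Step~2: producing a Banach space $\mathcal{B}$ on which $\TransferOp_{X,\varphi}$ is not just bounded but \emph{Fr\'echet differentiable} in $X$ with respect to the $\Ck{k}$-norm. The subtlety is that perturbing $X$ also perturbs the stable/unstable foliations, so the anisotropic norms themselves depend on $X$; one has to either use a norm defined via an auxiliary cone structure that is open in the $\Ck{1}$-topology (so all nearby $X$ share the same $\mathcal{B}$), or transport between $X$-dependent spaces via a smooth conjugation. Controlling this smoothly in $X$, and ensuring that the derivative of the cocycle is bounded as an operator on $\mathcal{B}$ (not just on $\Ck{0}$), is where the assumption $k\ge 2$ is genuinely used.
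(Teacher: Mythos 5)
Your proposal follows essentially the same route as the paper's proof: realize $P_X(\varphi)=\log\lambda(X,\varphi)$ via the leading simple isolated eigenvalue of the transfer operator $\TransferOp_{X,\varphi}$ on a suitable Banach space, establish Fr\'echet differentiability of $X\mapsto\TransferOp_{X,\varphi}$, apply Kato-type perturbation theory to obtain $\lambda'(0)=\nu_0\bigl(\TransferOp'_{X,\varphi}[\dot{X}_0]\,h_0\bigr)$, and then identify the resulting pairing with an integral against the equilibrium state via linear response. Your closing remark on the $X$-dependence of the anisotropic norms is a genuine technical point that the paper handles only by citation, so flagging it explicitly is a worthwhile refinement rather than a deviation.
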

	
	\begin{proof}
		The proof utilizes the transfer operator framework and analytic perturbation theory.
		
		\textbf{1. Transfer Operator Framework:} We work on the Banach manifold $\Ank{k}(M)$ for $k \ge 2$. For $X \in \Ank{k}(M)$ and $\varphi \in C^\alpha(M)$, the topological pressure $P_X(\varphi)$ is the unique real number such that the transfer operator $\mathcal{L}_{X, \varphi}$ (associated with the flow $\varphiup_X^t$ andpossible $\varphi$, acting on a suitable Banach space $\mathcal{B}$, e.g., $C^\alpha(M)$ or an anisotropic space) has spectral radius $e^{P_X(\varphi)}$ \cite{Ruelle78, PP90}. This value corresponds to a simple, isolated leading eigenvalue $\lambda(X, \varphi) = e^{P_X(\varphi)}$. The operator $\mathcal{L}_{X, \varphi}$ requires at least $C^{1+\alpha}$ regularity of the flow, hence our assumption $k \ge 2$.
		
		\textbf{2. Differentiability of the Operator Family:} The map $X \mapsto \mathcal{L}_{X, \varphi}$ is Fréchet differentiable from $C^k(TM)$ ($k \ge 2$) to the space of bounded operators $\mathcal{L}(\mathcal{B})$ (or suitable subspace like trace class) \cite{Baladi00, Ruelle89}. Let $\{X_t\}$ be a smooth curve in $\Ank{k}(M)$ with $X_0=X, \dot{X}_0=Y$. Then $t \mapsto \mathcal{L}_{X_t, \varphi}$ is a differentiable family of operators. Let $\mathcal{L}'_{X, \varphi}[Y]$ denote the derivative $D\mathcal{L}(X, \varphi)[Y]$.
		
		\textbf{3. Differentiability of the Leading Eigenvalue:} By standard analytic perturbation theory for isolated simple eigenvalues \cite{Kato95}, the leading eigenvalue $\lambda(X_t, \varphi)$ is differentiable with respect to $t$. Let $h_t \in \mathcal{B}$ and $\nu_t \in \mathcal{B}^*$ be the corresponding right eigenfunction and left eigenmeasure (dual eigenfunction), normalized such that $\mathcal{L}_{X_t, \varphi} h_t = \lambda(X_t, \varphi) h_t$, $\mathcal{L}_{X_t, \varphi}^* \nu_t = \lambda(X_t, \varphi) \nu_t$, and $\nu_t(h_t) = 1$. Differentiating the first equation at $t=0$ and applying $\nu_0$ yields the derivative of the eigenvalue:
		\[ \lambda'(0) \equiv \left.\frac{d}{dt}\right|_{t=0} \lambda(X_t, \varphi) = \nu_0(\mathcal{L}'_{X, \varphi}[\dot{X}_0] h_0). \]
		
		\textbf{4. Derivative of Pressure and Identification of $Q$:} Since $P(X_t, \varphi) = \log \lambda(X_t, \varphi)$, the chain rule gives:
		\[ \left.\frac{d}{dt}\right|_{t=0} P(X_t, \varphi) = \frac{\lambda'(0)}{\lambda(X, \varphi)} = \frac{\nu_0(\mathcal{L}'_{X, \varphi}[\dot{X}_0] h_0)}{\lambda(X, \varphi)}. \]
		The normalized measure $\nu_0$ corresponds to the equilibrium state $\mu_{X, \varphi}$. The term $\mathcal{L}'_{X, \varphi}[\dot{X}_0]$ encodes the infinitesimal change in the operator due to the vector field perturbation $\dot{X}_0$. The expression can  be rewritten (via linear response theory \cite{Ruelle97LinearResponse, BaladiSmania12}) as an integral:
		\[ \frac{\nu_0(\mathcal{L}'_{X, \varphi}[\dot{X}_0] h_0)}{\lambda(X, \varphi)} = \int_M Q(X, \varphi, \dot{X}_0) \, d\mu_{X, \varphi} ,\]
		where $Q(X, \varphi, \dot{X}_0)$ is an observable involving terms derived from $\dot{X}_0$, $\varphi$, and the flow $\varphiup_X^t$. This shows that $P(\cdot, \varphi)$ is Fréchet differentiable with the stated derivative formula.
	\end{proof}
	
	\begin{remark}[Implications]
		This theorem establishes that pressure varies smoothly within the space of sufficiently regular Anosov flows. The derivative is given by an explicit linear response formula, integrating an observable (depending on the perturbation direction $\dot{X}_0$) against the equilibrium state. This relies fundamentally on the $C^k$ ($k \ge 2$) structure of $\Ank{k}(M)$, which is compatible with the topology induced by $\dAnV{C^k}$. This differentiability opens the door to studying second variations (Hessian of pressure) aspossible analogues of curvature in this infinite-dimensional setting.
	\end{remark}
	
	\begin{theorem}[Differentiability of Lyapunov Spectrum in $H^k$ Topology]
		\label{thm:lyapunov_diff}
		Let $k > \dim(M)/2 + 2$, ensuring $H^k(TM) \hookrightarrow C^2(TM)$. Let $\Ank{k}(M) = \An(M) \cap H^k(TM)$ be the Hilbert manifold equipped with the Riemannian metric induced by the $\SobolevHk{k}$ inner product. For $X \in \Ank{k}(M)$, let $\mu_X$ be the SRB measure and $\Lambda(X) = (\lambda_1(X, \mu_X), \dots, \lambda_d(X, \mu_X))$ the ordered Lyapunov exponents w.r.t $\mu_X$.
		Then the map
		\[ \Lambda: \Ank{k}(M) \to \mathbb{R}^d, \quad X \mapsto \Lambda(X) ,\]
		is Fréchet differentiable. Its derivative at $X$ applied to $Y \in T_X\Ank{k}(M) \approx H^k(TM)$ is given component-wise by:
		\[ D\lambda_i(X, \mu_X)[Y] = \int_M \Phi_i(X, Y) \, d\mu_X, \]
		where $\Phi_i(X, Y)$ is an observable derived from the linear response of the derivative cocycle $\diff\varphiup_X^t$ to the perturbation $Y$.
	\end{theorem}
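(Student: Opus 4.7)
The plan is to combine a Sobolev-to-Hölder reduction with the standard linear response formalism for SRB measures of Anosov flows, together with the Oseledets integral representation of Lyapunov exponents. First, I would use the Sobolev embedding $\SobolevHk{k}(\TM) \hookrightarrow \Ck{2}(\TM)$, valid since $k > \dim(M)/2 + 2$, to reduce the differentiability question to one about the restriction of $\Lambda$ to a $\Ck{2}$-neighbourhood. Any bounded linear functional $Y \mapsto DF(X)[Y]$ on $\Ck{2}(\TM)$ produced by linear response theory then automatically restricts to a bounded linear functional on $\SobolevHk{k}(\TM)$, yielding Fréchet differentiability in the $H^k$ topology compatible with the Riemannian structure underlying $\dAnV{\SobolevHk{k}}$. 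For the integral representation, I would write partial sums of exponents as
\[ \sum_{j \le m} \lambda_j(X, \mu_X) = \int_M \log\bigl|\det \diff\varphiup_X^1|_{E_m(X,x)}\bigr|\, \diff\mu_X(x), \]
for the distinguished $m$-dimensional Oseledets subspaces $E_m(X,x)$, in particular recovering the sum of positive exponents as an integral involving only the unstable bundle $E^u_X$, and then obtaining individual $\lambda_i$ by differencing under the assumption that the exponents are simple.

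Next, I would separately establish two differentiability statements in the $\Ck{2}$ norm and assemble them via a product rule. The first is the Ruelle-Butterley-Liverani linear response for the SRB measure: for $\Ck{2}$ Anosov flows, the map $X \mapsto \mu_X$ is Fréchet differentiable as a functional on Hölder observables, with explicit susceptibility formula
\[ D\mu_X[Y](\phi) = \sum_{n \ge 0} \int_M (\mathcal{R}_n[Y]\phi)\, \diff\mu_X, \]
for a susceptibility operator $\mathcal{R}_n[Y]$ depending linearly and boundedly on $Y \in \Ck{2}(\TM)$ when acting on appropriate anisotropic Banach spaces. The second is the differentiability of the integrand $x \mapsto \log|\det \diff\varphiup_X^1|_{E_m(X,x)}|$ with respect to $X$; this follows from the variational equation for $\diff\varphiup_X^t$ (which produces a linear dependence of the variation of the cocycle on $Y$) combined with the $\Ck{1}$-Hölder dependence of the hyperbolic and Oseledets subbundles on $X$ in the $\Ck{2}$ norm, via classical graph transform and cone field arguments. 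The product rule then yields the announced formula $D\lambda_i(X)[Y] = \int_M \Phi_i(X, Y)\, \diff\mu_X$, where $\Phi_i(X, Y)$ collects both the pointwise infinitesimal variation of the expansion rate along the $i$-th Oseledets direction and the susceptibility kernel applied to the unperturbed expansion rate.

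The main obstacle is twofold. First, the classical linear response theorems for flows are delicate at the $\Ck{2}$ borderline, and I would need to appeal to the Butterley-Liverani anisotropic Banach space framework to ensure that the susceptibility series converges absolutely and defines a bounded linear map in the $\Ck{2}$ norm; without this, only Gâteaux differentiability would be immediate and the Fréchet upgrade would require a separate equicontinuity estimate. Second, and more conceptually, individual Lyapunov exponents are only guaranteed to be Fréchet differentiable when the Oseledets subspaces carrying $\lambda_i$ have a genuine spectral gap from their neighbours in the Lyapunov spectrum; in the degenerate case one obtains differentiability only of the partial sums across the gaps, and the statement for individual exponents must be interpreted as valid on the open set of $X$ where the Lyapunov spectrum is simple. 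The remaining components — the chain rule, boundedness of the assembled linear functional, and explicit identification of $\Phi_i$ — are then routine consequences of the operator-theoretic setup established in Steps 1 and 2 above.
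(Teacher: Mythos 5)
Your proposal follows essentially the same route as the paper's own (rather brief) argument: reduce $H^k$-differentiability to the $C^2$ setting via the Sobolev embedding, invoke linear response theory for SRB measures of $C^2$ Anosov flows (Ruelle, Dolgopyat, Butterley--Liverani), and represent Lyapunov exponents as SRB averages of derivative-cocycle data, so that the derivative is an integral against $\mu_X$. Where you go beyond the paper is in making the Oseledets representation explicit (partial sums via $\int_M \log|\det(d\varphi^1_X|_{E_m(X,x)})|\,d\mu_X$ and differencing) and in flagging the two genuine delicacies: convergence of the susceptibility series at the $C^2$ borderline, and---more importantly---the fact that individual exponents $\lambda_i$ are only differentiable where the Lyapunov spectrum is simple (or where the relevant Oseledets blocks are separated by uniform gaps); otherwise only block sums such as the sum of positive exponents over $E^u$ vary differentiably, since the finer Oseledets subbundles are in general only measurable and need not depend regularly on $X$. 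The paper's proof does not address this point at all---it simply asserts that exponents are SRB averages and cites linear response---so your caveat is not a defect of your argument but a restriction (or reinterpretation in terms of partial sums across spectral gaps) that the component-wise statement actually requires. In short: same strategy, executed with more care, and correctly identifying the hypothesis under which the theorem as stated is justified.
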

	
	\begin{proof}
		\textbf{1. Setup and Regularity:} We work on the Hilbert manifold $\Ank{k}(M) \subset H^k(TM)$ with $k > \dim(M)/2 + 2$, ensuring flows $\varphiup_X^t$ are $C^2$.
		
		\textbf{2. Smooth Dependence of Ingredients:} For $C^2$ Anosov flows, the flow map $\varphiup_X^t$, the derivative cocycle $\diff\varphiup_X^t$, and the SRB measure $\mu_X$ depend differentiably on $X$ within the $C^2$ topology (and hence also within the $H^k$ topology by Sobolev embedding) \cite{Dolgopyat04, Ruelle97LinearResponse}.
		
		\textbf{3. Linear Response Theory for Exponents:} Results by Ruelle \cite{Ruelle97LinearResponse} and subsequent extensions establish that for sufficiently smooth hyperbolic systems, averages of observables against the SRB measure are differentiable with respect to perturbations of the system. Lyapunov exponents can be expressed as such averages (via Oseledets theorem or Birkhoff sums related to the derivative cocycle). Therefore, the map $X \mapsto \lambda_i(X, \mu_X)$ is differentiable. The derivative takes the form of an integral against the SRB measure $\mu_X$ of an explicitly derived observable $\Phi_i(X, Y)$, which captures the first-order effect of the perturbation $Y$ on the cocycle's expansion/contraction rates.
		
		\textbf{4. Conclusion:} The existence of this linear response formula establishes the Fréchet differentiability of each map $X \mapsto \lambda_i(X, \mu_X)$ on the Hilbert manifold $\Ank{k}(M)$.
	\end{proof}
	
	\begin{remark}[Implications]
		This theorem provides a precise formula for the first-order variation of Lyapunov exponents along paths (particularly geodesics, which are straight lines in the $H^k$ setting) in the space of sufficiently smooth Anosov flows. It connects the geometry of deformation paths (via the tangent vector $Y$) to the change in fundamental dynamical stability indicators ($\lambda_i$) through an explicit integral formula involving the SRB measure, solidifying the link between the metric space structure and linear response theory.
	\end{remark}
	
	\begin{theorem}[Differentiability of the Spectral Gap]
		\label{thm:spectral_gap_diff}
		
		Let $k > \dim(M)/2 + 2$, ensuring $H^k(TM) \hookrightarrow C^2(TM)$. Consider the Hilbert manifold $\Ank{k}(M) = \An(M) \cap H^k(TM)$ equipped with the flat Riemannian metric induced by the $H^k$ inner product.
		Assume that for each $X \in \Ank{k}(M)$, the associated transfer operator $\mathcal{L}_X$ (acting on a suitable Banach space $\mathcal{B}$, e.g., an anisotropic space) satisfies:
		\begin{enumerate}[label=(\arabic*)] 
			\item $\mathcal{L}_X$ has a simple, isolated leading eigenvalue $\lambda(X)$ (often related to pressure, e.g., $\lambda(X)=e^{P(X)}$).
			\item The rest of the spectrum, $\Spectrum_{rest}(X) = \Spectrum(\mathcal{L}_X) \setminus \{\lambda(X)\}$, is contained within a disk of radius $r_X < |\lambda(X)|$.
			\item The spectral gap $\delta(X) = |\lambda(X)| - \sup \{ |z| : z \in \Spectrum_{rest}(X) \}$ is uniformly bounded below by some $\delta_0 > 0$ in a neighborhood of interest.
			\item The map $X \mapsto \mathcal{L}_X$ from $\Ank{k}(M)$ to $\mathcal{L}(\mathcal{B})$ is $C^1$ (Fréchet differentiable with continuous derivative).
		\end{enumerate}
		Then the spectral gap function $\delta: \Ank{k}(M) \to \R_{>0}$ is $C^1$ (Fréchet differentiable with continuous derivative). The derivative $D\delta(X)[Y]$ for $Y \in T_X\Ank{k}(M) = H^k(TM)$ can be computed using perturbation theory for the spectrum. Specifically, if $\lambda(X)=1$ (normalized operator), the derivative involves the variation of the leading eigenvalue and the radius $r_X$ of the rest of the spectrum. If the map $X \mapsto \mathcal{L}_X$ is $C^p$ ($p \ge 2$, possibly requiring higher $k$), then $\delta(X)$ is $C^p$. In particular, if $X \mapsto \mathcal{L}_X$ is $C^2$, the Hessian $D^2\delta(X)[Y,Z]$ exists and can be computed using second-order perturbation theory.
	\end{theorem}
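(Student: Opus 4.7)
The plan is to apply analytic perturbation theory in the style of Kato to the smooth family $X \mapsto \mathcal{L}_X$, isolating the leading eigenvalue by a Riesz projection and then treating the remainder of the spectrum separately. The key input is that hypotheses (1)--(3) provide a uniform spectral separation on a neighborhood $U$ of any fixed $X_0 \in \Ank{k}(M)$: there exist radii $0 < \rho_1 < \rho_2$ such that $\Spectrum_{rest}(X) \subset \{|z| \le \rho_1\}$ and $\lambda(X)$ is the unique spectral point in $\{|z| \ge \rho_2\}$ for every $X \in U$. Hypothesis (4), together with the standard analyticity of the resolvent in the operator, supplies all the regularity needed to push the perturbation-theoretic machinery through.

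First I would address the leading eigenvalue. Fix a positively oriented smooth contour $\Gamma_0$ encircling $\lambda(X_0)$ and no other spectral point of $\mathcal{L}_{X_0}$. By the uniform gap and continuity of the resolvent in $X$, the contour $\Gamma_0$ continues to separate $\lambda(X)$ from $\Spectrum_{rest}(X)$ for all $X$ in a possibly smaller neighborhood $U' \subset U$. The Riesz projection
\[ P_X \;=\; \frac{1}{2\pi i} \oint_{\Gamma_0} (z - \mathcal{L}_X)^{-1}\, dz \]
is a rank-one idempotent inheriting the regularity of the operator family, since $(z - \mathcal{L}_X)^{-1}$ depends $C^p$ on $X$ uniformly for $z \in \Gamma_0$. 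Consequently $\lambda(X) = \mathrm{tr}(\mathcal{L}_X P_X)$ is $C^p$, with first derivative given by the Kato formula $D\lambda(X)[Y] = \nu_X\bigl(D\mathcal{L}_X[Y]\, h_X\bigr)$ for normalized left and right eigenvectors $\nu_X, h_X$; this is formally the same calculation that appears in Theorem~\ref{thm:pressure_diff}.

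Second, I would handle the radius of the remainder, $r_X = \sup\{|z| : z \in \Spectrum_{rest}(X)\}$. Here the situation is more delicate because the spectral radius of the restricted operator $\mathcal{L}_X(I - P_X)$ is only upper semi-continuous in general. In the hyperbolic setting of interest---quasi-compact transfer operators on anisotropic Banach spaces---the sub-leading spectrum decomposes as finitely many isolated eigenvalues of finite multiplicity outside a disk of essential spectral radius $\rho_{ess}(X)$, together with essential spectrum inside that disk. Either the outermost sub-leading eigenvalue is simple and isolated, in which case a second Riesz-projection argument identical to the one above yields its $C^p$ dependence, or $r_X = \rho_{ess}(X)$, determined by explicit dynamical quantities (expansion rates of $\varphiup^t_X$ and the weight defining the Banach space) that are themselves $C^p$ in $X$. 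In either subcase one obtains $r_X \in C^p$ locally, and then $\delta(X) = |\lambda(X)| - r_X$ is $C^p$; the Hessian is obtained by differentiating the two Riesz representations a second time and applying Kato--Rellich expansions, which introduces the reduced resolvent $S_X = (\mathcal{L}_X - \lambda(X))^{-1}(I - P_X)$ in the standard way.

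The main obstacle, and the reason the theorem is phrased with the caveat ``can be computed using perturbation theory,'' is precisely the possibly non-smooth behavior of $r_X$ when the sub-leading spectrum has no stable decomposition along the perturbation: collisions between the outermost sub-leading eigenvalue and the essential spectral disk can occur, and at such crossings $r_X$ is only Lipschitz rather than $C^1$. A fully general argument must therefore either restrict to a generic open stratum where such collisions do not occur, or invoke structural results specific to Anosov transfer operators that guarantee smooth dependence of the entire sub-leading component. Apart from this genuinely analytic difficulty, the remainder of the proof is a routine application of contour integration, operator-norm estimates for the Riesz projections, and the chain rule, all of which transport along the paths used to define $\dAnV{H^k}$ thanks to the $C^1$-dependence hypothesis (4).
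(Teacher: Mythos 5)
Your proposal follows essentially the same route as the paper: Kato-style perturbation theory (via the spectral separation guaranteed by hypotheses (1)--(3) and the $C^1$/$C^p$ dependence in hypothesis (4)) gives smoothness of the leading eigenvalue with the derivative formula $\nu_X\bigl(D\mathcal{L}_X[Y]\,h_X\bigr)$, and smoothness of $r_X$ is then argued from robustness of the spectral picture for Anosov transfer operators, yielding $\delta(X)=|\lambda(X)|-r_X$ of the claimed regularity. In fact your treatment of $r_X$ is more explicit than the paper's (which simply asserts, citing Keller--Liverani and Baladi--Tsujii, that $r_X$ inherits the smoothness of the operator family), and your observation that collisions of the outermost sub-leading eigenvalue with the essential spectral disk can reduce $r_X$ to merely Lipschitz correctly identifies the step the paper leaves as an assumption on ``favorable circumstances.''
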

	
	\begin{proof}
		\textbf{1. Setup and Assumptions:} We work on the Hilbert manifold $\Ank{k}(M)$ where local geodesics are straight lines $\gamma(t)=X+tY$. The assumptions ensure $C^2$ flows and well-behaved spectral properties for $\mathcal{L}_X$, crucial for applying perturbation theory.
		
		\textbf{2. $C^1$ Differentiability of $\lambda(X)$:} By assumption (4) and standard analytic perturbation theory \cite{Kato95} for simple isolated eigenvalues, the map $X \mapsto \lambda(X)$ is $C^1$. Its derivative along the path $X+tY$ at $t=0$ is $\left.\frac{d}{dt}\right|_{t=0} \lambda(X+tY) = \nu_X(\mathcal{L}'_X[Y] h_X)$, where $h_X, \nu_X$ are normalized right/left eigenfunctions and $\mathcal{L}'_X[Y] = D\mathcal{L}(X)[Y]$.
		
		\textbf{3. $C^1$ Differentiability of $r_X$ and $\delta(X)$:} The radius of the essential spectral radius $r_X = \sup \{ |z| : z \in \Spectrum_{rest}(X) \}$ is typically upper semi-continuous. However, under strong hyperbolicity assumptions and suitable choices of Banach space $\mathcal{B}$, the map $X \mapsto \mathcal{L}_X$ can be analytic or $C^p$ \cite{BaladiTsujii07, BL07}, and perturbation theory can guarantee that $r_X$ is also $C^p$ (or at least $C^1$ if $X \mapsto \mathcal{L}_X$ is $C^1$) \cite{KellerLiverani}. If both $X \mapsto |\lambda(X)|$ and $X \mapsto r_X$ are $C^1$, then their difference, the spectral gap $\delta(X) = |\lambda(X)| - r_X$, is also $C^1$. The derivative $D\delta(X)[Y]$ is then $D|\lambda(X)|[Y] - Dr_X[Y]$. The exact formula depends on the specifics of the operator and space, but its existence follows from the $C^1$ dependence of the components.
		
		\textbf{4. Higher Differentiability:} If the map $X \mapsto \mathcal{L}_X$ is $C^p$ (requiring higher regularity for $X$, e.g., $H^{k+p-1}$ or $C^{p+1}$), then analytic perturbation theory \cite{Kato95} guarantees that the simple isolated eigenvalue $\lambda(X)$ is $C^p$. Under favorable circumstances (e.g., analyticity or specific structural properties of the operator family), the radius $r_X$ can also be shown to be $C^p$. Consequently, the spectral gap $\delta(X)$ would be $C^p$. Explicit formulas for higher derivatives exist but become increasingly complex, involving higher derivatives of $\mathcal{L}_X$, $h_X$, $\nu_X$, and possibly the resolvent operator. The existence of the second derivative (Hessian) $D^2\delta(X)$ follows if $X \mapsto \mathcal{L}_X$ is $C^2$.
	\end{proof}
	
	\begin{remark}
		Theorem~\ref{thm:spectral_gap_diff} establishes the smooth dependence of the mixing rate (as determined by the spectral gap) on the generating vector field within the strong $H^k$ topology. The existence of the second derivative $D^2\delta(X)$ is particularly significant, as this Hessian can be interpreted as a form of curvature associated with the stability of mixing along deformation paths (geodesics) in the space $\Ank{k}(M)$. This seems to open an avenue for applying concepts from infinite-dimensional Riemannian geometry to study the fine structure of chaotic dynamics and its response to perturbation. Verifying the required smoothness of $X \mapsto \mathcal{L}_X$ and the behavior of the rest of the spectrum are key analytical steps for applying this result.
	\end{remark}
	
	\begin{proposition}[Hessian of Spectral Gap and Transverse Spectral Curvature]
		\label{prop:hessian_spectral_gap}
		Let $\Ank{k}(M) = \An(M) \cap \SobolevHk{k}(TM)$ be the Hilbert manifold of Anosov vector fields with $H^k$ regularity, for $k > \dim(M)/2 + 2$. Equip $\Ank{k}(M)$ with the flat Riemannian metric induced by the $\SobolevHk{k}$ inner product, for which local geodesics are straight lines $\gamma(t) = X + tY$. Let $\TransferOp_X$ be the transfer operator associated with $X \in \Ank{k}(M)$ acting on a suitable Banach space $\mathcal{B}$. Let $\delta: \Ank{k}(M) \to \R_{>0}$ be the spectral gap function, defined as $\delta(X) = \abs{\lambda(X)} - r_X$, where $\lambda(X)$ is the leading eigenvalue of $\TransferOp_X$ and $r_X = \sup \{ \abs{z} : z \in \Spectrum(\TransferOp_X) \setminus \{\lambda(X)\} \}$.
		
		\textbf{Assume:}
		\begin{enumerate}[label=(\roman*)]
			\item The map $X \mapsto \TransferOp_X$ from $\Ank{k}(M)$ to $\mathcal{L}(\mathcal{B})$ is $C^2$ (twice Fréchet differentiable with continuous second derivative).
			\item For all $X$ in a neighborhood of interest, $\lambda(X)$ is a simple, isolated eigenvalue, and the spectral gap $\delta(X)$ is uniformly bounded below by some $\delta_0 > 0$.
			\item Let $G_0$ be a Banach Lie subgroup of $\Diff^k(M)$ acting smoothly on $\Ank{k}(M)$ by pushforward. For $X \in \Ank{k}(M)$, assume the tangent space admits a continuous splitting $T_X\Ank{k}(M) = V_X \oplus H_X$, where $V_X = T_X(G_0 \cdot X)$ is the tangent space to the orbit through $X$.
		\end{enumerate}
		
		\textbf{Then:}
		\begin{enumerate}[label=(\arabic*)]
			\item The spectral gap function $\delta: \Ank{k}(M) \to \R$ is $C^2$.
			\item The second derivative of $\delta$ along any geodesic $\gamma(t) = X + tY$ (assumed to stay within $\Ank{k}(M)$) exists and is given at $t=0$ by the Hessian:
			\[ \left.\frac{d^2}{dt^2}\right|_{t=0} \delta(\gamma(t)) = D^2\delta(X)[Y, Y]. \]
			This Hessian is a symmetric bilinear form on $T_X\Ank{k}(M) \approx \SobolevHk{k}(TM)$ and can be computed using second-order perturbation theory for the operator $\TransferOp_X$, its eigenvalues, and eigenfunctions.
			\item The restriction of the Hessian $D^2\delta(X)$ to the subspace $H_X$ (transverse to the conjugacy orbit), denoted $D^2\delta(X)|_{H_X}$, measures the second-order variation of the mixing rate under perturbations that are not infinitesimally generated by the group action $G_0$.
			\item This restricted Hessian, $D^2\delta(X)|_{H_X}$, can be interpreted as a \textbf{"transverse spectral curvature"} associated with the point $[X]$ in the moduli space $\ModuliSpace = \Ank{k}(M) / G_0$. It quantifies the stability or instability of the mixing rate against dynamically distinct perturbations.
		\end{enumerate}
	\end{proposition}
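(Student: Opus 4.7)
The plan is to build on Theorem~\ref{thm:spectral_gap_diff} by upgrading the $C^1$ analysis of $\delta$ to $C^2$, to exploit the flat Hilbert geometry from Theorem~\ref{thm:geodesic_existence_Hk} (where geodesics are straight lines $\gamma(t) = X + tY$) to identify $\tfrac{d^2}{dt^2}\big|_{t=0}\delta(\gamma(t))$ with the ambient Hessian $D^2\delta(X)[Y,Y]$, and finally to use the conjugacy invariance of $\Spectrum(\TransferOp_X)$ under the $G_0$-action to let the Hessian descend to the moduli space, legitimising the \emph{transverse spectral curvature} reading.

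For parts (1) and (2), the workhorse is analytic perturbation theory \cite{Kato95}. Assumption (i) gives that $X \mapsto \TransferOp_X$ is $C^2$ into $\mathcal{L}(\mathcal{B})$, while (ii) and (iii) say that $\lambda(X)$ is a simple isolated eigenvalue with a uniform gap. I would use the Dunford projector $\Pi_X = \tfrac{1}{2\pi i}\oint_{\Gamma_X}(z-\TransferOp_X)^{-1}\,dz$ along a small contour $\Gamma_X$ encircling only $\lambda(X)$, and conclude that $\Pi_X$, $\lambda(X)$, and the normalised eigenelements $h_X, \nu_X$ depend $C^2$ on $X$. The standard second-order expansion along $\gamma(t) = X + tY$ then gives
\[
\lambda''(0) = \nu_X\bigl(\TransferOp_X''[Y,Y]\,h_X\bigr) + 2\,\nu_X\bigl(\TransferOp_X'[Y]\,S_X\,\TransferOp_X'[Y]\,h_X\bigr),
\]
where $S_X$ is the reduced resolvent of $\TransferOp_X$ at $\lambda(X)$. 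Running the same contour argument around a circle of radius slightly larger than $r_X$ (available thanks to (iii)) yields $C^2$ dependence of the complementary spectral projector, from which, under the assumed quasi-compactness, one extracts $C^2$ regularity of $r_X$ itself. Subtracting yields $\delta \in C^2$ and, by flatness, $\tfrac{d^2}{dt^2}\big|_{t=0}\delta(\gamma(t)) = D^2\delta(X)[Y,Y]$.

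For parts (3) and (4), I would first observe that the pushforward $f_*$ implements a Banach-space isomorphism conjugating $\TransferOp_X$ to $\TransferOp_{f_*X}$; hence $\Spectrum(\TransferOp_X)$, and in particular $\delta(X)$, is constant on $G_0$-orbits. Smoothness of $\delta$ combined with this invariance yields $D\delta(X)|_{V_X} = 0$, and twice-differentiating the identity $\delta(g_t\cdot X) = \delta(X)$ together with polarization gives $D^2\delta(X)[V,W] = 0$ whenever $V \in V_X$. Consequently the Hessian restricts to a well-defined symmetric bilinear form on $H_X$, coinciding with the Hessian of the quotient function $\tilde{\delta}$ on $\ModuliSpace$ at $[X]$ wherever the quotient is smooth. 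Reading this restricted form as a transverse spectral curvature is then natural within the flat ambient metric: positive, respectively negative, eigenvalues correspond to perturbations along which the mixing rate is locally convex, respectively concave.

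The step I expect to be the main obstacle is the $C^2$ regularity of $r_X$. For the simple isolated leading eigenvalue Kato's theory is textbook, but $r_X$ is the outer radius of a potentially intricate residual spectrum and, in full generality, only upper semicontinuous in the operator norm. Obtaining $C^2$ dependence needs extra structure, typically that $\TransferOp_X$ is quasi-compact on an anisotropic Banach space with no spectral accumulation on the critical circle $|z| = r_X$, as in the frameworks of \cite{BaladiTsujii07, Liverani04}. A secondary difficulty is making the descent to $\ModuliSpace$ fully rigorous, since $\Ank{k}(M)/G_0$ is generically a stratified space, so the transverse curvature reading is unambiguous only on smooth strata where the horizontal splitting assumed in (iii) is a genuine subbundle.
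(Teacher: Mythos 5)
Your treatment of parts (1) and (2) follows essentially the same route as the paper: Kato-type analytic perturbation theory for the simple isolated eigenvalue $\lambda(X)$ via spectral projectors, an appeal to the quasi-compactness/anisotropic-space framework of \cite{KellerLiverani, BaladiTsujii07} for the $C^2$ regularity of $r_X$ (which you rightly flag as the genuinely delicate point -- the paper essentially assumes it with the same citations), and then the chain rule along the straight-line geodesics of the flat $H^k$ metric to identify $\left.\frac{d^2}{dt^2}\right|_{t=0}\delta(\gamma(t))$ with $D^2\delta(X)[Y,Y]$. That part is fine.

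The gap is in parts (3) and (4). You assert that $\delta$ is constant on $G_0$-orbits because $f_*$ conjugates $\TransferOp_X$ to $\TransferOp_{f_*X}$, and you then deduce $D\delta(X)|_{V_X}=0$, $D^2\delta(X)[V,W]=0$ for $V\in V_X$, and an identification of $D^2\delta(X)|_{H_X}$ with the Hessian of a quotient function $\tilde{\delta}$ on $\ModuliSpace$. The paper takes the opposite position, and states explicitly (both in the proof of part (4) and in the subsequent remark) that $\delta(X)$ is \emph{not} $G_0$-invariant in general, so it does not descend to a function on $\ModuliSpace$; the restricted Hessian is interpreted only as the second-order variation of $\delta$ in directions transverse to the orbit, not as the Hessian of an induced function on the quotient. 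The invariance claim is indeed problematic in this framework: $r_X$ includes the essential spectrum of $\TransferOp_X$ on the \emph{fixed} Banach space $\mathcal{B}$, and pushing forward by a $C^k$ diffeomorphism changes the anisotropic structure (and any metric-dependent weights), so while the intrinsic discrete resonances in a suitable annulus may be preserved, the full quantity $\abs{\lambda(X)}-r_X$ on a fixed $\mathcal{B}$ need not be. Moreover, even granting invariance, your polarization step is incorrect as stated: differentiating $\delta(c(t))=\delta(X)$ twice along an orbit curve $c(t)=g_t\cdot X$ with $c'(0)=V$ gives $D^2\delta(X)[V,V]=-D\delta(X)[c''(0)]$, which vanishes only at critical points of $\delta$ (or with additional structure); it does not give $D^2\delta(X)[V,W]=0$ for arbitrary $W$, so the Hessian does not automatically annihilate $V_X$ nor canonically induce a bilinear form on $H_X$ independent of the choice of complement. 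The paper's weaker formulation -- restricting $D^2\delta(X)$ to the chosen complement $H_X$ and calling that restriction a ``transverse spectral curvature'' attached to $[X]$, with the dependence on the splitting acknowledged -- avoids exactly this trap, and your argument would need either the critical-point hypothesis or a proof of genuine $G_0$-invariance of $\delta$ on the fixed space $\mathcal{B}$ to close the gap.
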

	
	\begin{proof}
		(1) \textbf{$C^2$ Differentiability of $\delta(X)$:}
		The spectral gap is defined as $\delta(X) = \abs{\lambda(X)} - r_X$. We need to show both terms are $C^2$ functions of $X \in \Ank{k}(M)$.
		\begin{itemize}
			\item \textbf{Leading Eigenvalue $\lambda(X)$:} By assumption (i), the map $X \mapsto \TransferOp_X$ from the Hilbert manifold $\Ank{k}(M)$ to the Banach space $\mathcal{L}(\mathcal{B})$ is $C^2$. By assumption (ii), $\lambda(X)$ is a simple, isolated eigenvalue for $X$ in the neighborhood of interest. Standard analytic perturbation theory for linear operators (see \cite[Chapter VII]{Kato95}) states that if an operator family $\TransferOp_X$ depends $C^p$ on a parameter $X$, then its simple isolated eigenvalues $\lambda(X)$ also depend $C^p$ on $X$. Since $p=2$ here, the map $X \mapsto \lambda(X)$ is $C^2$. The leading eigenvalue $\lambda(X)$ is related to the topological pressure and is typically real and positive for the relevant operators. Thus, the map $X \mapsto \abs{\lambda(X)}$ is also $C^2$.
			\item \textbf{Radius $r_X$ of the rest of the spectrum:} The radius $r_X = \sup \{ \abs{z} : z \in \Spectrum(\TransferOp_X) \setminus \{\lambda(X)\} \}$ measures the spectral gap from below. Showing that $r_X$ is $C^2$ requires that the spectral properties are well-behaved under perturbation. For hyperbolic systems treated with suitable anisotropic spaces $\mathcal{B}$, perturbation theory often guarantees that the radius $r_X$ inherits the smoothness of the operator family $X \mapsto \TransferOp_X$ (see e.g., \cite{KellerLiverani, BaladiTsujii07}). Under assumption (i) that $X \mapsto \TransferOp_X$ is $C^2$, we deduce that $X \mapsto r_X$ is also $C^2$. (This relies on the robustness of the spectral picture under $C^2$ perturbations of the operator family).
			\item \textbf{Conclusion for $\delta(X)$:} Since both $X \mapsto \abs{\lambda(X)}$ and $X \mapsto r_X$ are $C^2$, their difference $\delta(X) = \abs{\lambda(X)} - r_X$ is a $C^2$ function on $\Ank{k}(M)$. This confirms part (1).
		\end{itemize}
		
		(2) \textbf{Second Derivative along Geodesics:}
		In the Hilbert manifold $\Ank{k}(M)$ equipped with the flat Riemannian metric induced by the $\SobolevHk{k}$ inner product, geodesics are straight lines $\gamma(t) = X + tY$ for $X \in \Ank{k}(M)$ and $Y \in T_X\Ank{k}(M) \approx \SobolevHk{k}(TM)$, provided the path stays within $\Ank{k}(M)$.
		Since $\delta: \Ank{k}(M) \to \R$ is a $C^2$ function (from part 1) and $t \mapsto \gamma(t)$ is a smooth (affine) path, the composition $f(t) = \delta(\gamma(t))$ is a $C^2$ function from $\R$ to $\R$.
		Using the chain rule for differentiation on Banach manifolds:
		\begin{itemize}
			\item The first derivative is: $f'(t) = D\delta(\gamma(t))[\gamma'(t)] = D\delta(X+tY)[Y]$.
			\item The second derivative is: $f''(t) = \frac{d}{dt} \left( D\delta(X+tY)[Y] \right) = D^2\delta(X+tY)[Y, \gamma'(t)] = D^2\delta(X+tY)[Y, Y]$. Here $D^2\delta(X)$ denotes the second Fréchet derivative (Hessian) of $\delta$ at $X$, which is a continuous symmetric bilinear form on $T_X\Ank{k}(M) \times T_X\Ank{k}(M)$.
		\end{itemize}
		Evaluating at $t=0$: $\left.\frac{d^2}{dt^2}\right|_{t=0} \delta(\gamma(t)) = f''(0) = D^2\delta(X)[Y, Y]$. This confirms the existence and the formula for the second derivative along the geodesic, relating it directly to the Hessian.
		
		(3) \textbf{Restriction to $H_X$:}
		By assumption (iii), we have a continuous splitting $T_X\Ank{k}(M) = V_X \oplus H_X$, where $V_X = T_X(G_0 \cdot X)$ is the tangent space to the orbit. Vectors in $V_X$ represent infinitesimal perturbations achievable by $G_0$-conjugacy. Vectors in $H_X$ represent perturbations transverse to this orbit.
		The restriction of the Hessian $D^2\delta(X)$ to pairs $(Y, Y)$ where $Y \in H_X$ measures the second-order change in $\delta$ corresponding to perturbations $Y$ that are infinitesimally transverse to the orbit. This proves the statement.
		
		(4) \textbf{Interpretation as Transverse Spectral Curvature:}
		The moduli space $\ModuliSpace = \Ank{k}(M) / G_0$ represents Anosov flows up to $G_0$-conjugacy. Locally, its structure is related to the transverse space $H_X$ (via the slice theorem framework, Proposition \ref{prop:slice_theorem_conditional}). The spectral gap $\delta(X)$ is generally not invariant under $G_0$, so it does not define a function on $\ModuliSpace$.
		However, the restricted Hessian $D^2\delta(X)|_{H_X}$ captures the second-order variation ("curvature") of $\delta$ in directions relevant to the moduli space (i.e., directions that change the conjugacy class). It quantifies how the stability of the mixing rate changes quadratically as one moves away from the conjugacy class $[X]$ within the moduli space. It is therefore appropriate to interpret $D^2\delta(X)|_{H_X}$ as a "transverse spectral curvature" associated with $[X] \in \ModuliSpace$. The sign definiteness of this restricted Hessian indicates whether $[X]$ corresponds to a local extremum of the spectral gap stability within the moduli space.
	\end{proof}
	
	\begin{remark}[Interpretations] \leavevmode 
		\begin{itemize}
			\item \textbf{Not Curvature of Moduli Space:} $D^2\delta(X)|_{H_X}$ is the curvature of the function $\delta$ restricted to relevant directions, not the intrinsic Riemannian curvature of the moduli space $\ModuliSpace$ itself (which may lack a natural Riemannian structure).
			\item \textbf{Non-Invariance of $\delta$:} While $\delta(X)$ is not $G_0$-invariant, the properties of its transverse Hessian $D^2\delta(X)|_{H_X}$ provide meaningful geometric information associated with the equivalence class $[X]$.
			\item \textbf{Dependence on Splitting:} The definition of $H_X$ affects the restricted Hessian. Using the $\SobolevHk{k}$-orthogonal complement $H_X = V_X^\perp$ is a natural choice if $V_X$ is closed.
			\item \textbf{Dynamical Rigidity:} If $D^2\delta(X)|_{H_X}$ is negative definite, it suggests $[X]$ is a local maximizer of the spectral gap modulo $G_0$, indicating rigidity related to the mixing rate.
		\end{itemize}
	\end{remark}
	
	\begin{proposition}[Regularity and Local Convexity of Pressure Along Geodesics]
		\label{prop:pressure_regularity_geodesic}
		Let $\Ank{k}(M) = \An(M) \cap \SobolevHk{k}(TM)$ be the Hilbert manifold for $k > \dim(M)/2 + 1$. Equip it with the flat $H^k$ Riemannian metric, where local geodesics are straight lines $\gamma(t) = X + t(Y - X)$ for $X, Y \in \Ank{k}(M)$ (assuming the segment stays in $\Ank{k}(M)$). Let $f \in C^\alpha(M)$ ($\alpha > 0$, or smoother) be apossible.
		
		Assume that the map $Z \mapsto \mathcal{L}_{Z, f}$ from $\Ank{k}(M)$ to $\mathcal{L}(\mathcal{B})$ (the space of bounded operators on a suitable Banach space $\mathcal{B}$) is $C^2$ (twice Fréchet differentiable with continuous second derivative). This often holds if $k$ is sufficiently large, e.g., $k > \dim(M)/2 + 2$ and $f$ is smooth enough.
		
		Then:
		\begin{enumerate}[label=(\arabic*)]
			\item The topological pressure function along the geodesic,
			$p(t) \coloneqq P(\gamma(t), f), $ 
			is $C^2$ as a function of $t \in [0, 1]$.
			
			\item The second derivative of the pressure along the geodesic is given by the Hessian of the pressure functional $P(\cdot, f)$ evaluated on the direction vector $Y-X$:
			\[ \frac{d^2 p}{dt^2}(t) = D^2P(\gamma(t))[Y-X, Y-X]. \]
			
			\item The sign of this second derivative $\frac{d^2 p}{dt^2}(t)$ determines the local convexity or concavity of the pressure function $p(t)$ at parameter $t$. Due to the non-linear dependence of the dynamics (and thus the transfer operator $\mathcal{L}_{\gamma(t), f}$) on the interpolation parameter $t$, the sign of $D^2P(\gamma(t))[Y-X, Y-X]$ is generally not constant along the geodesic. Therefore, the pressure function $t \mapsto p(t)$ is  neither convex nor concave over the entire interval $[0, 1]$.
		\end{enumerate}
	\end{proposition}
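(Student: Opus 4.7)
The plan is to leverage the $C^2$ regularity of the operator family assumed in the hypothesis and propagate it through the spectral decomposition to obtain $C^2$ dependence of the pressure on the parameter $X$, then invoke the chain rule on the affine geodesic to conclude parts (1) and (2); part (3) then follows from a structural analysis of the Hessian together with a perturbative example.

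For part (1), I would first recall that under the hypothesis, $P(Z,f) = \log \lambda(Z,f)$ where $\lambda(Z,f)$ is the leading simple isolated eigenvalue of $\mathcal{L}_{Z,f}$, exactly as in the proof of Theorem~\ref{thm:pressure_diff}. The assumption that $Z \mapsto \mathcal{L}_{Z,f}$ is $C^2$ from $\Ank{k}(M)$ to $\mathcal{L}(\mathcal{B})$, combined with the simplicity and isolation of $\lambda(Z,f)$ uniformly in a neighborhood of $\gamma([0,1])$ (which follows from uniform spectral gap for Anosov flows, as in Theorem~\ref{thm:mixing_dimension_stability} and Proposition~\ref{prop:hessian_spectral_gap}), lets me apply second-order analytic perturbation theory for isolated simple eigenvalues \cite{Kato95}. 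This gives $Z \mapsto \lambda(Z,f)$ of class $C^2$ on a neighborhood of the geodesic, and since $\lambda(Z,f) > 0$, the composition with $\log$ preserves $C^2$ regularity. The affine path $\gamma(t) = X + t(Y-X)$ is smooth in $t$, so $p(t) = P(\gamma(t), f)$ is $C^2$ by composition.

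For part (2), I would apply the chain rule on Banach manifolds to $p = P(\cdot, f) \circ \gamma$. Since $\gamma'(t) = Y - X$ is constant in $t$, the first derivative is $p'(t) = DP(\gamma(t), f)[Y-X]$, and differentiating once more, the term that would come from $\frac{d}{dt}(Y-X)$ vanishes, leaving $p''(t) = D^2P(\gamma(t), f)[Y-X, Y-X]$. The right-hand side is well-defined as a continuous symmetric bilinear form at each $\gamma(t)$ by part (1), giving the claimed identity.

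For part (3), the argument proceeds by isolating the two competing contributions to $D^2P$ via second-order perturbation theory: writing $\mathcal{L}_{\gamma(t),f} = \mathcal{L}_X + t \mathcal{L}'_X[Y-X] + \tfrac{t^2}{2} \mathcal{L}''_X[Y-X, Y-X] + o(t^2)$, the Kato expansion gives $\lambda''(0)$ as a sum of a term involving $\nu_X(\mathcal{L}''_X[Y-X,Y-X] h_X)$ and a reduced-resolvent correction $-2\nu_X(\mathcal{L}'_X[Y-X] S_X \mathcal{L}'_X[Y-X] h_X)$, where $S_X$ is the reduced resolvent on the complement of the leading eigenspace. I would argue that these two terms do not have a fixed relative sign: the first reflects the genuinely nonlinear (quadratic) dependence of the flow generator composition on $Y-X$, while the second is a non-negative variance-type contribution in favorable cases but can be overridden in others. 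To make the statement that $p(t)$ is generically neither globally convex nor concave rigorous, I would exhibit a perturbation for which $p''(0)$ and $p''(1)$ have opposite signs; the simplest route is to pick $X_0 \in \Ank{k}(M)$ and $Y - X = \eta Z$ for a fixed direction $Z$ and small $\eta$, and use the fact that $P(\cdot, f)$ is not an affine functional to note that its Hessian cannot vanish identically on a neighborhood, then use continuity of $D^2P$ along the geodesic together with the freedom in choosing $X, Y, f$ to produce sign changes.

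The main obstacle is this last step: establishing rigorously that the sign of $D^2P(\gamma(t), f)[Y-X, Y-X]$ genuinely changes along a generic geodesic, rather than merely that convexity does not follow from the present hypotheses. A careful argument here may require either an explicit toy example (e.g.\ a smooth one-parameter family of geodesic flows on negatively curved surfaces where the pressure variance can be computed) or an appeal to general density arguments showing the set of $(X, Y, f)$ where $p''$ vanishes identically is meagre. The first two parts are comparatively routine given the assumed $C^2$ dependence of the transfer operator family.
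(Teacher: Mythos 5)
Your treatment of parts (1) and (2) is exactly the paper's route: $P(Z,f)=\log\lambda(Z,f)$, second-order Kato perturbation theory for the simple isolated leading eigenvalue under the assumed $C^2$ dependence of $Z\mapsto\mathcal{L}_{Z,f}$, positivity of $\lambda$ so that $\log$ preserves regularity, and then the chain rule along the affine path with $\gamma'(t)=Y-X$ constant, yielding $p''(t)=D^2P(\gamma(t))[Y-X,Y-X]$. Where you diverge is part (3), and there you actually go further than the paper does. The paper's own argument is only a proof sketch: it asserts that the dependence of $\mathcal{L}_{\gamma(t),f}$ on $t$ is highly non-linear, that no general principle forces $D^2P(Z)[V,V]$ to keep a fixed sign along a segment, and hence that convexity or concavity over all of $[0,1]$ is ``not expected'' --- it never exhibits a sign change or invokes a genericity argument. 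Your decomposition of $\lambda''(0)$ into the term $\nu_X(\mathcal{L}''_X[Y-X,Y-X]h_X)$ and the reduced-resolvent correction is a sharper way to see why no fixed sign is forced, and the obstacle you flag (rigorously producing a geodesic along which $p''$ changes sign, e.g.\ via an explicit family of geodesic flows or a density/meagreness argument) is a genuine gap --- but it is a gap in the paper's own proof as well, since the statement of part (3) is itself qualified by ``generally'' and the paper justifies it only heuristically. So your proposal is at least as complete as the published argument, and your honest identification of what a fully rigorous proof of (3) would require is a point in its favor rather than a defect relative to the paper.
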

	
	\begin{proof}[Proof Sketch] 
		\textbf{1. $C^2$ Differentiability of Pressure Function $P(Z, f)$:} By Theorem \ref{thm:pressure_diff}, the map $Z \mapsto P(Z, f)$ is $C^1$ on $\Ank{k}(M)$ for $k \ge 2$. Under the stronger assumption that $Z \mapsto \mathcal{L}_{Z, f}$ is $C^2$, analytic perturbation theory (\cite{Kato95}) implies that the leading eigenvalue $\lambda(Z, f)$ is a $C^2$ function of $Z$. Since $P(Z, f) = \log \lambda(Z, f)$, the pressure map $Z \mapsto P(Z, f)$ is also $C^2$ on $\Ank{k}(M)$.
		
		\textbf{2. Regularity along the Geodesic:} The path $\gamma(t) = X + t(Y-X)$ is an affine (hence $C^\infty$) path in the Hilbert space $H^k(TM)$. Since $P(\cdot, f)$ is a $C^2$ function on the Hilbert manifold $\Ank{k}(M)$, the composition $p(t) = P(\gamma(t), f)$ is a $C^2$ function from $[0, 1]$ to $\R$.
		
		\textbf{3. Second Derivative Formula:} Applying the chain rule twice to the composition $p(t) = P(\gamma(t), f)$:
		\begin{itemize}
			\item $\frac{dp}{dt}(t) = DP(\gamma(t))[\gamma'(t)] = DP(\gamma(t))[Y-X]$.
			\item $\frac{d^2p}{dt^2}(t) = \frac{d}{dt} (DP(\gamma(t))[Y-X]) = D^2P(\gamma(t))[Y-X, \gamma'(t)] = D^2P(\gamma(t))[Y-X, Y-X]$.
		\end{itemize}
		This confirms formula (2).
		
		\textbf{4. Non-Convexity:} The sign of $\frac{d^2 p}{dt^2}(t)$ depends on the Hessian $D^2P$ evaluated at $\gamma(t)$ in the direction $Y-X$. As established, the relationship between the vector field $\gamma(t)$ and the resulting dynamics/spectral data is highly non-linear. There is no general principle guaranteeing that $D^2P(Z)[V, V]$ maintains a constant sign (non-negative for convexity, non-positive for concavity) for all points $Z$ along a linear segment and a fixed direction $V = Y-X$. Therefore, convexity or concavity of $p(t)$ over the whole interval $[0, 1]$ is not expected.
	\end{proof}
	
	\begin{remark}[Interpretation]
		This proposition clarifies that while the pressure varies smoothly ($C^2$) along linear interpolations (geodesics in $H^k$), it generally does not do so in a convex or concave manner. The second derivative, related to the Hessian $D^2P$, provides a measure of the local curvature of the pressure function along the geodesic path. Its sign can change, reflecting the complex interplay between the linear structure of the vector field space $H^k(TM)$ and the non-linear nature of the thermodynamic formalism. This Hessian $D^2P$ itself could be considered a geometric invariant associated with the stability of the leading eigenvalue under second-order perturbations, distinct from the Hessian $D^2\delta$ related to the spectral gap.
	\end{remark}
	
	\subsection{Stability and Structure on the Moduli Space of Anosov Flows}
	
	We consider the space of dynamically distinct Anosov flows by quotienting by the action of the diffeomorphism group. Let $G = \Diff^r(M)$ be the group of $C^r$ diffeomorphisms of $M$, acting on $\Ank{r}(M)$ by pushforward: $(f, X) \mapsto f_*X$, where $(f_*X)(y) = Df_{f^{-1}(y)}(X(f^{-1}(y)))$. This action corresponds to $C^r$ conjugacy of the generated flows.
	
	\begin{definition}[Moduli Space and Quotient Metric]
		The \emph{moduli space} of $C^r$ Anosov flows is the orbit space $\ModuliSpace_r = \Ank{r}(M) / G$. We denote the equivalence class of $X \in \Ank{r}(M)$ by $[X]$. We equip $\ModuliSpace_r$ with the quotient metric $d_{\ModuliSpace_r}$ induced by $\dAnV{C^r}$:
		\[
		d_{\ModuliSpace_r}([X], [Y]) = \inf_{X' \in [X], \, Y' \in [Y]} \left\{ \dAnV{C^r}(X', Y') \right\}.
		\]
	\end{definition}
	
	While the completeness of the moduli space $(\ModuliSpace_r, d_{\ModuliSpace_r})$ is a subtle issue (requiring stronger assumptions on the group action or metric structure than assumed here), we can establish the stability of dynamically relevant invariants on this space.
	
	\begin{theorem}[Quantitative Stability of Conjugacy Invariants on the Moduli Space]
		\label{thm:moduli_stability}
		Let $r \ge 2$. Let $I: \Ank{r}(M) \to \R$ be a dynamical invariant (e.g., topological entropy, Lyapunov spectrum components, SRB measure entropy) satisfying:
		\begin{enumerate}[label=(\arabic*)] 
			\item \textbf{Conjugacy Invariance:} $I(f_*X) = I(X)$ for all $X \in \Ank{r}(M)$ and $f \in G = \Diff^r(M)$. This allows $I$ to descend to a well-defined function $\bar{I}: \ModuliSpace_r \to \R$ such that $\bar{I}([X]) = I(X)$.
			\item \textbf{Local Lipschitz Continuity on $\Ank{r}(M)$:} For every $X_0 \in \Ank{r}(M)$, there exists a neighborhood $U$ of $X_0$ in $(\Ank{r}(M), \dAnV{C^r})$ and a constant $L > 0$ such that for all $X, Y \in U$,
			\[
			|I(X) - I(Y)| \le L \cdot \dAnV{C^r}(X, Y).
			\]
			(This holds for many standard invariants when $r \ge 2$, as established in Theorems~\ref{thm:srb_pressure_continuity}, \ref{thm:mixing_dimension_stability}, etc., using inequality \eqref{eq:norm_vs_dist_Ck_discuss_sec5}).
		\end{enumerate}
		Then, the induced invariant $\bar{I}: \ModuliSpace_r \to \R$ is locally Lipschitz continuous with respect to the quotient metric $d_{\ModuliSpace_r}$. That is, for every $[X_0] \in \ModuliSpace_r$, there exists a neighborhood $\mathcal{U}$ of $[X_0]$ in $(\ModuliSpace_r, d_{\ModuliSpace_r})$ and a constant $L' > 0$ (in fact, $L'=L$) such that for all $[X], [Y] \in \mathcal{U}$,
		\[
		|\bar{I}([X]) - \bar{I}([Y])| \le L' \cdot d_{\ModuliSpace_r}([X], [Y]).
		\]
	\end{theorem}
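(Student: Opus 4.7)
The plan is to lift the local Lipschitz estimate from $\Ank{r}(M)$ to $\ModuliSpace_r$ by combining the conjugacy invariance (1), which makes $\bar I$ well-defined on the quotient, with the infimum in the definition of $d_{\ModuliSpace_r}$. The strategy is to fix a base representative, apply the Lipschitz bound on a fixed $\dAnV{C^r}$-neighborhood of that representative, select near-minimizing representatives of the two compared classes inside this neighborhood, and pass to the limit as the error in the infimum tends to zero.

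Concretely, I would fix $X_0\in[X_0]$ and apply hypothesis (2) to obtain $R>0$ and $L>0$ with $|I(A)-I(B)|\le L\,\dAnV{C^r}(A,B)$ for all $A,B\in U:=B_{\dAnV{C^r}}(X_0,R)$. Then I would take $\mathcal{U}:=\{[X]\in\ModuliSpace_r:d_{\ModuliSpace_r}([X_0],[X])<R/3\}$, which is $d_{\ModuliSpace_r}$-open and has the property that every class in $\mathcal U$ admits a representative in $B_{\dAnV{C^r}}(X_0,R/3)\subset U$. For $[X],[Y]\in\mathcal{U}$ and any $\epsilon>0$, the definition of $d_{\ModuliSpace_r}$ yields representatives $X'\in[X]$ and $Y'\in[Y]$ with $\dAnV{C^r}(X',Y')<d_{\ModuliSpace_r}([X],[Y])+\epsilon$; by invariance, $|\bar I([X])-\bar I([Y])|=|I(X')-I(Y')|$ independently of the choice of $X'$ and $Y'$. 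If this near-minimizing pair can be chosen inside $U$, then the Lipschitz bound gives
\[
|\bar I([X])-\bar I([Y])|<L\bigl(d_{\ModuliSpace_r}([X],[Y])+\epsilon\bigr),
\]
and letting $\epsilon\to 0$ produces the conclusion with $L'=L$.

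The main obstacle is precisely that a near-minimizing pair need not a priori lie in $U$: because pushforward by a non-isometric $f\in G$ distorts $\dAnV{C^r}$-distances (see Proposition~\ref{prop:diffeo_isometry}), a minimizing sequence over orbits could in principle drift along orbits far from $X_0$. I would resolve this by showing that, after possibly shrinking $\mathcal{U}$, the restricted infimum $\inf\{\dAnV{C^r}(X',Y'):X'\in[X]\cap U,\,Y'\in[Y]\cap U\}$ coincides with $d_{\ModuliSpace_r}([X],[Y])$: since every class in $\mathcal{U}$ meets $B(X_0,R/3)$, one can start from any near-minimizing pair, simultaneously conjugate both representatives by a single element of $G$ that carries the first into $B(X_0,R/3)$, and use continuity of the $G$-action together with the triangle inequality in $\dAnV{C^r}$ to confine the second to $U$ with only a controlled change in their mutual distance. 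The sharpness of the constant $L'=L$, rather than some multiple $C\cdot L$, hinges on this threshold analysis and is the only truly delicate point of the argument.
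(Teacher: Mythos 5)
Your route is the same as the paper's: fix a base representative $X_0$, work in a $\dAnV{C^r}$-ball $U$ where hypothesis (2) gives the constant $L$, define $\mathcal U$ by a quotient-distance threshold, pick $\epsilon$-near-minimizing representatives, use conjugacy invariance of $I$, and let $\epsilon\to 0$. You also correctly isolate the step on which everything hinges: an $\epsilon$-near-minimizing pair for $d_{\ModuliSpace_r}([X],[Y])$ need not lie in $U$, because the infimum ranges over entire $G$-orbits and $G=\Diff^r(M)$ does not act by isometries of the fixed metric $\dAnV{C^r}$. (For what it is worth, the paper's own proof elides exactly this point: it chooses representatives within $\delta/2$ of $X_0$ and simultaneously assumes the same pair $\epsilon$-realizes the quotient distance, without reconciling the two requirements.)

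However, your proposed repair does not close the gap. Conjugating a near-minimizing pair $(X'',Y'')$ by a single $f\in G$ chosen so that $f_*X''\in B(X_0,R/3)$ gives no control on $\dAnV{C^r}(f_*X'',f_*Y'')$: Proposition~\ref{prop:diffeo_isometry} only says that $f_*$ is an isometry onto the space equipped with the metric built from the pulled-back auxiliary Riemannian metric, while with respect to the fixed metric the distortion of $f_*$ is governed by the $C^r$-norms of $f$ and $f^{-1}$, which are unbounded as the near-minimizing pairs range over the orbits; nothing forces the relevant conjugating diffeomorphisms into a bounded set. So the assertion that the restricted infimum over $([X]\cap U)\times([Y]\cap U)$ equals $d_{\ModuliSpace_r}([X],[Y])$ is unsubstantiated, and even a distortion bound $C(f)\ge 1$ would only yield $L'=C\cdot L$, losing the sharp constant claimed. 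Note also that the same issue already affects your preliminary claim that every class in $\mathcal U$ meets $B_{\dAnV{C^r}}(X_0,R/3)$: since $d_{\ModuliSpace_r}$ is an infimum over representatives of \emph{both} classes, smallness of $d_{\ModuliSpace_r}([X],[X_0])$ produces $X'$ close to some $h_*X_0$, not to $X_0$ itself. Closing these gaps genuinely requires extra input, for instance restricting to a subgroup acting by $\dAnV{C^r}$-isometries (isometries of $g$, as in the corollary following Proposition~\ref{prop:diffeo_isometry}), a properness or slice-type normalization of near-minimizing pairs into $U$ (cf.\ Proposition~\ref{prop:slice_theorem_conditional}), or a global rather than local Lipschitz hypothesis on $I$.
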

	
	\begin{proof}
		Let $[X_0] \in \ModuliSpace_r$. By assumption (2), the invariant $I$ is locally Lipschitz with constant $L$ in some neighborhood $U$ of $X_0$ defined by $\dAnV{C^r}(X', X_0) < \delta$ for some $\delta > 0$.
		Consider the neighborhood $\mathcal{U} = \{ [Y] \in \ModuliSpace_r \mid d_{\ModuliSpace_r}([Y], [X_0]) < \delta / 2 \}$. Let $[X], [Y] \in \mathcal{U}$.
		
		By definition of the quotient metric $d_{\ModuliSpace_r}$, for any $\epsilon > 0$, there exist representatives $X' \in [X]$ and $Y' \in [Y]$ such that
		\[
		\dAnV{C^r}(X', Y') < d_{\ModuliSpace_r}([X], [Y]) + \epsilon.
		\]
		We also need to ensure $X'$ and $Y'$ are close enough to $X_0$ to use the local Lipschitz property of $I$. Since $[X] \in \mathcal{U}$, we have $d_{\ModuliSpace_r}([X], [X_0]) < \delta/2$. By definition of the quotient metric, this implies $\inf_{f \in G} \dAnV{C^r}(f_*X, X_0) < \delta/2$. We can thus choose the representative $X'$ such that $\dAnV{C^r}(X', X_0) < \delta/2$. Similarly, we can choose the representative $Y' \in [Y]$ such that $\dAnV{C^r}(Y', X_0) < \delta/2$. By the triangle inequality for $\dAnV{C^r}$,
		\[
		\dAnV{C^r}(X', Y') \le \dAnV{C^r}(X', X_0) + \dAnV{C^r}(X_0, Y') < \delta/2 + \delta/2 = \delta.
		\]
		So, both $X'$ and $Y'$ lie within the $\delta$-ball around $X_0$ where $I$ is Lipschitz with constant $L$.
		
		Now we estimate the difference of the invariant $\bar{I}$:
		\begin{align*}
			|\bar{I}([X]) - \bar{I}([Y])| &= |I(X') - I(Y')| \quad (\text{by conjugacy invariance (1)}) \\
			&\le L \cdot \dAnV{C^r}(X', Y') \quad (\text{by local Lipschitz property (2) since $X', Y' \in U$}) \\
			&< L \cdot (d_{\ModuliSpace_r}([X], [Y]) + \epsilon).
		\end{align*}
		Since this holds for arbitrary $\epsilon > 0$, we conclude that
		\[
		|\bar{I}([X]) - \bar{I}([Y])| \le L \cdot d_{\ModuliSpace_r}([X], [Y]).
		\]
		This demonstrates that $\bar{I}$ is locally Lipschitz on $(\ModuliSpace_r, d_{\ModuliSpace_r})$ with constant $L' = L$.
	\end{proof}
	
	\begin{remark}
		This theorem confirms that the dynamically relevant distances captured by $\dAnV{C^r}$ on the space of Anosov flows descend to provide quantitative control over the variation of conjugacy invariants on the moduli space, provided sufficient regularity ($r \ge 2$). The local Lipschitz constant for the invariant on the moduli space is bounded by its local Lipschitz constant on the space of flows itself.
	\end{remark}
	
	\subsubsection*{Local Structure of the Moduli Space via Slice Theorems}
	\label{sec:slice_theorem}
	
	Theorem~\ref{thm:moduli_stability} demonstrates that conjugacy invariants behave well with respect to the quotient metric $d_{\ModuliSpace_r}$ on the moduli space $\ModuliSpace_r = \Ank{r}(M) / \Diff^r(M)$. A deeper geometric understanding of $\ModuliSpace_r$, including questions of completeness or the existence of geodesics,  requires analyzing its local structure. In the context of group actions on manifolds, the standard tool for this is a slice theorem. We outline the structure of such a theorem and its demanding analytical prerequisites in our setting.
	
	Let $G_0$ be a suitable subgroup of $\Diff^r(M)$ (e.g., identity component $\Diff^r_0(M)$, volume-preserving diffeomorphisms $\Diff^r_\mu(M)$, or isometries $\mathrm{Isom}(M,g_0)$) acting on $\Ank{r}(M)$ via pushforward $\Psi(f, X) = f_*X$. For the moduli space $\ModuliSpace_0 = \Ank{r}(M) / G_0$ to possess a well-behaved local structure akin to a manifold modelled on a Banach space, a slice theorem is generally needed.
	
	\begin{proposition}[Conditional Local Slice Theorem]
		\label{prop:slice_theorem_conditional}
		Let $r \ge 1$ and let $G_0$ be a Banach Lie subgroup of $\Diff^r(M)$ acting on the Banach manifold $\Ank{r}(M)$. Let $X_0 \in \Ank{r}(M)$. Assume the following conditions hold:
		\begin{enumerate}[label=(\roman*)]
			\item \textbf{Smooth Action:} The action map $\Psi: G_0 \times \Ank{r}(M) \to \Ank{r}(M)$ is at least $C^1$.
			\item \textbf{Complemented Splitting of Tangent Space:} Let $V_{X_0} = T_{X_0}(G_0 \cdot X_0)$ be the tangent space to the orbit through $X_0$. Assume $V_{X_0}$ is a closed subspace of $T_{X_0}\Ank{r}(M) \approx C^r(TM)$ and admits a closed complementary subspace $H_{X_0}$ such that $T_{X_0}\Ank{r}(M) = V_{X_0} \oplus H_{X_0}$.
		\end{enumerate}
		Then, there exists a local $C^1$ slice for the $G_0$ action through $X_0$. That is, there is a neighborhood $U_{X_0}$ of $X_0$ in $\Ank{r}(M)$ and a $C^1$ submanifold $S \subset U_{X_0}$ containing $X_0$ with $T_{X_0}S = H_{X_0}$ such that:
		\begin{itemize}
			\item The map $\Phi: G_0 \times S \to \Ank{r}(M)$, $\Phi(f, Y) = f_*Y$, restricts to a $C^1$ diffeomorphism from a neighborhood of $(e, X_0)$ onto $U_{X_0}$.
			\item Every orbit $G_0 \cdot X$ for $X \in U_{X_0}$ intersects $S$ locally in exactly one point.
		\end{itemize}
	\end{proposition}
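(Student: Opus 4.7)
The plan is to apply the Banach inverse function theorem to the restricted action map $\Phi: G_0 \times S \to \Ank{r}(M)$, using hypothesis (ii) to construct a natural affine slice $S$ through $X_0$ and hypothesis (i) to ensure the map is $C^1$. The strategy follows the classical pattern for slice theorems on Banach manifolds, with the splitting of the tangent space providing exactly the data needed to identify the transverse direction.

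First, since $\Ank{r}(M)$ is open in $C^r(TM)$ and $H_{X_0}$ is a closed subspace, I would set
\[
S = \{ X_0 + h : h \in H_{X_0},\ \|h\|_{C^r} < \eta \}
\]
for $\eta > 0$ small enough that $S \subset \Ank{r}(M)$. This is an affine $C^1$ submanifold of $\Ank{r}(M)$ satisfying $T_{X_0}S = H_{X_0}$. Next, I would linearize $\Phi$ at $(e, X_0)$: for $(\xi, h) \in T_e G_0 \oplus H_{X_0}$, the Fréchet derivative decomposes as
\[
D\Phi(e, X_0)(\xi, h) = \rho(\xi) + h,
\]
where $\rho: T_e G_0 \to T_{X_0}\Ank{r}(M)$ is the infinitesimal action whose image is, by definition of $V_{X_0}$ as the orbit's tangent space, exactly $V_{X_0}$.

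Combining hypothesis (ii), namely $T_{X_0}\Ank{r}(M) = V_{X_0} \oplus H_{X_0}$, with the surjectivity $\rho(T_e G_0) = V_{X_0}$ shows that the map $(\xi, h) \mapsto \rho(\xi) + h$ is surjective. To make it a bounded linear isomorphism I would restrict $\xi$ to a closed complement $\mathfrak{m}$ of $\ker \rho$ in $T_e G_0$, so that $\rho|_\mathfrak{m}: \mathfrak{m} \to V_{X_0}$ is bijective. Under this reduction, the Banach inverse function theorem applied to $\Phi$ (composed with a chart on $G_0$ at $e$ adapted to $\mathfrak{m}$) yields a $C^1$ local diffeomorphism from a neighborhood of $(e, X_0)$ in $G_0 \times S$ onto an open neighborhood $U_{X_0}$ of $X_0$ in $\Ank{r}(M)$. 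The two stated slice properties then follow directly: each $X \in U_{X_0}$ admits a unique decomposition $X = f_* Y$ with $(f, Y)$ in the chosen neighborhood, so every orbit $G_0 \cdot X$ intersects $S$ locally in exactly one point.

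The main obstacle is the handling of isotropy. If $\rho$ has non-trivial kernel and $\ker \rho$ lacks a closed complement in $T_e G_0$, then $D\Phi(e, X_0)$ cannot be reduced to an isomorphism and the inverse function theorem fails as phrased. In practice one must either strengthen hypothesis (ii) by explicitly demanding such a closed complement in $T_e G_0$ (automatic when the isotropy group $G_{X_0}$ has discrete Lie algebra, so that $\ker \rho = \{0\}$), or reformulate the theorem after passing to the homogeneous quotient $G_0 / G_{X_0}$, which itself requires non-trivial Banach Lie group theory to ensure the quotient is a smooth manifold. This analytical prerequisite—rather than the formal application of the inverse function theorem—is the substantive reason the proposition is stated conditionally.
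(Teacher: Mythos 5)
Your proposal follows essentially the same route as the paper's proof sketch: take the affine slice $S = X_0 + H_{X_0}$ (locally), apply the Banach inverse function theorem to $\Phi(f,h) = f_*(X_0 + h)$ near $(e, X_0)$, and use the splitting $T_{X_0}\Ank{r}(M) = V_{X_0} \oplus H_{X_0}$ to identify the derivative as $(\xi,h) \mapsto \alpha_{X_0}(\xi) + h$. Your additional discussion of the isotropy kernel is in fact a point the paper's sketch glosses over --- it simply asserts that assumption (ii) makes $D\Phi(e,X_0)$ an isomorphism, which as you correctly note requires either injectivity of the infinitesimal action or a closed complement of $\ker\rho$ in $T_e G_0$ --- so your version is, if anything, the more careful reading of why the result must be stated conditionally.
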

	
	\begin{proof}[Proof Sketch] 
		The proof relies on the Inverse Function Theorem for Banach manifolds applied to the map $\Phi(f, h) = f_*(X_0 + h)$ defined locally near $(e, 0)$ on $G_0 \times H_{X_0}$, where $h \in H_{X_0}$ is viewed as a tangent vector representing displacement from $X_0$. The derivative $D\Phi(e, 0)$ maps $(\xi, h') \in T_e G_0 \times H_{X_0}$ to $\alpha_{X_0}(\xi) + h' \in V_{X_0} \oplus H_{X_0}$, where $\alpha_{X_0}$ is the infinitesimal action (Lie derivative essentially). Assumption (ii) ensures this derivative is an isomorphism between the product tangent space and $T_{X_0}\Ank{r}(M)$. Application of the IFT establishes $\Phi$ as a local diffeomorphism, which yields the slice properties. The slice $S$ can be parameterized locally by $X_0+H_{X_0}$.
	\end{proof}
	
	\begin{remark}[Analytical Challenges and Strategies]
		Verifying the hypotheses of Proposition~\ref{prop:slice_theorem_conditional} is a major analytical task:
		\begin{itemize}
			\item \textbf{Smoothness of Action:} Requires careful estimates in function spaces, often demanding higher regularity ($r$ large). The pushforward action involves derivatives of $f$, requiring $f$ to be smoother than $X$.
			\item \textbf{Splitting Condition:} Establishing the closedness of the orbit tangent space $V_{X_0}$ and constructing a continuous projection onto it (to define $H_{X_0}$) is typically the main bottleneck. Strategies include:
			\begin{itemize}
				\item Restricting $G_0$ to subgroups with simpler structure (e.g., finite-dimensional Lie groups like $\mathrm{Isom}(M,g_0)$ where $V_{X_0}$ is finite-dimensional).
				\item Using a Hilbert space structure (e.g., working with $\Ank{k}(M) \subset H^k(TM)$ and defining $H_{X_0}$ via orthogonal complements using the $H^k$ inner product), though compatibility with the group action remains crucial (the action might not preserve $H^k$).
				\item Employing advanced analytical tools like Hodge decomposition (for volume-preserving groups acting on divergence-free fields) or Nash-Moser techniques if loss of derivatives occurs during the construction.
			\end{itemize}
		\end{itemize}
		Despite these challenges, the slice theorem framework provides the standard route to understanding the local geometry of the moduli space $\ModuliSpace_0 = \Ank{r}(M)/G_0$. If established, it shows that $\ModuliSpace_0$ is locally modelled on the Banach space $H_{X_0}$ (the space of "transverse" deformations), providing a foundation for studying completeness (using Proposition~\ref{prop:completeness_Cr_Hk}), geodesics, and the behavior of invariants on the quotient space.
	\end{remark}
	
	\begin{proposition}[Rigidity of Strict Entropy Maximizers]
		\label{prop:entropy_maximizer_rigidity}
		Let $r \ge 2$ such that the topological entropy functional $\htop: \Ank{r}(M) \to \R$ is at least $C^2$. Let $G_0 = \Diff^r_0(M)$ be the group of $C^r$ diffeomorphisms isotopic to the identity, acting on $\Ank{r}(M)$ by pushforward.
		Suppose $X_* \in \Ank{r}(M)$ satisfies:
		\begin{enumerate}[label=(\arabic*)] 
			\item $X_*$ is a local maximizer of $\htop$ within its path component in $\Ank{r}(M)$, hence $D\htop(X_*) = 0$.
			\item $X_*$ is a \emph{strict} local maximizer modulo $G_0$-conjugacy. Assume this is guaranteed by a non-degeneracy condition on the second derivative (Hessian): Let $T_{X_*}\Ank{r}(M) = V_{X_*} \oplus H_{X_*}$ be a continuous splitting where $V_{X_*} = T_{X_*}(G_0 \cdot X_*)$ is the tangent space to the orbit. Assume the Hessian $D^2\htop(X_*)$ is negative definite when restricted to the complementary subspace $H_{X_*}$.
		\end{enumerate}
		Then, there exists a neighborhood $U$ of $X_*$ in $(\Ank{r}(M), \dAnV{C^r})$ such that if $X \in U$ satisfies $\htop(\varphiup_X) = \htop(\varphiup_{X_*})$, then $X$ must be $C^r$-conjugate to $X_*$ via some $f \in G_0$ close to the identity. Equivalently, the set of flows locally maximizing topological entropy is, up to $G_0$-conjugacy, isolated within $U$.
	\end{proposition}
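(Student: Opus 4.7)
The plan is to combine the conditional slice theorem (Proposition~\ref{prop:slice_theorem_conditional}) with a Morse-type strict-maximum argument on the slice, then close the loop using the conjugacy invariance of topological entropy. The overall shape is: find local coordinates near $X_*$ in which the $G_0$-orbit direction is separated from a transverse slice $S$; show $\htop$ has a strict local maximum on $S$ using the Hessian hypothesis; then conclude that any nearby flow with the same entropy must lie in the $G_0$-orbit of $X_*$.

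First I would invoke Proposition~\ref{prop:slice_theorem_conditional} for the action of $G_0 = \Diff^r_0(M)$ at $X_*$, using the continuous splitting $T_{X_*}\Ank{r}(M) = V_{X_*} \oplus H_{X_*}$ supplied by hypothesis~(2). This yields a $C^1$ slice $S$ through $X_*$ with $T_{X_*}S = H_{X_*}$ and a local $C^1$ diffeomorphism $\Phi(f,Y) = f_*Y$ from a neighborhood of $(e, X_*)$ in $G_0 \times S$ onto a $\Ck{r}$-neighborhood $U_{X_*}$ of $X_*$. Inequality~\eqref{eq:norm_vs_dist_Ck_discuss_sec5} then lets me shrink this to a $\dAnV{\Ck{r}}$-neighborhood $U \subset U_{X_*}$ on which the slice parameterization is still valid.

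Next I would analyze $h := \htop|_S$ on the slice. Since $\htop$ is $C^2$ on $\Ank{r}(M)$ by assumption, $h$ is $C^2$ at $X_*$ with $Dh(X_*) = D\htop(X_*)|_{H_{X_*}} = 0$ from hypothesis~(1) and Hessian $D^2h(X_*) = D^2\htop(X_*)|_{H_{X_*}}$, which is negative definite by hypothesis~(2). Promoting this to the coercive estimate $D^2h(X_*)[v,v] \le -c\|v\|_{\Ck{r}}^2$ and applying a $C^2$ Taylor expansion on $S$ produces constants $c', \rho > 0$ with
\[ h(Y) \le h(X_*) - c'\|Y - X_*\|_{\Ck{r}}^2 \quad \text{for all } Y \in S \cap B_\rho(X_*), \]
so $X_*$ is a strict local maximum of $\htop|_S$. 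For any $X \in U$ with $\htop(\varphiup_X) = \htop(\varphiup_{X_*})$, I write $X = f_*Y$ uniquely via $\Phi^{-1}$; conjugacy invariance gives $\htop(\varphiup_Y) = \htop(\varphiup_{X_*})$, so strict maximality forces $Y = X_*$ and hence $X = f_*X_*$, which is the claimed rigidity.

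The main obstacle is analytical and has two layers. The sharper difficulty is converting ``negative definite'' into the uniform coercivity estimate above: in infinite dimensions, pointwise negativity of a continuous symmetric bilinear form does not suffice to produce a strict local maximum from a $C^2$ Taylor expansion, so one must interpret hypothesis~(2) as genuine spectral coercivity of $D^2\htop(X_*)|_{H_{X_*}}$ (this is essentially what ``strict maximizer modulo $G_0$'' should mean). The secondary difficulty is verifying the hypotheses of Proposition~\ref{prop:slice_theorem_conditional} for the full group $\Diff^r_0(M)$: closedness of $V_{X_*}$ and existence of a continuous complement $H_{X_*}$ in the $\Ck{r}$ topology are genuinely delicate, and one may be forced to pass to the Hilbert setting $\dAnV{\SobolevHk{k}}$ (where orthogonal complements are automatic) or to restrict to a subgroup as discussed in the remark following that proposition. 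Once these two analytical points are secured, the rigidity conclusion follows by the routine chain outlined above.
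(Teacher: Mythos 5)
Your proposal is correct and follows essentially the same route as the paper: pass to a local slice for the $G_0$-action (the paper likewise leans on Proposition~\ref{prop:slice_theorem_conditional}), use conjugacy invariance to reduce to $\htop$ restricted to the slice, and conclude from the negative-definite transverse Hessian that the strict maximum at $X_*$ forces the slice coordinate to vanish, so $X = f_*X_*$. The two caveats you flag (uniform coercivity of $D^2\htop(X_*)|_{H_{X_*}}$ in infinite dimensions, and verification of the slice hypotheses for $\Diff^r_0(M)$) are exactly the points the paper's own proof leaves conditional, so your treatment matches its level of rigor.
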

	
	\begin{proof}
		The proof combines the variational properties of $\htop$ near a non-degenerate maximum with the local structure possibly provided by a slice theorem (or arguments mirroring its logic).
		
		\textbf{1. Variational Analysis near $X_*$:} Since $X_*$ is a local maximum, $D\htop(X_*) = 0$. By Taylor's theorem for Banach manifolds, for $X = X_* + Y$ with $\|Y\|_{C^r}$ small, we have
		\[
		\htop(X) \approx \htop(X_*) + D\htop(X_*)[Y] + \frac{1}{2} D^2\htop(X_*)[Y, Y] = \htop(X_*) + \frac{1}{2} D^2\htop(X_*)[Y, Y].
		\]
		If $\htop(X) = \htop(X_*)$, then we must have (approximately, or exactly in the limit as $Y \to 0$) $D^2\htop(X_*)[Y, Y] \le 0$.
		
		\textbf{2. Non-degeneracy Implication:} We assume the existence of a splitting $T_{X_*}\Ank{r}(M) = V_{X_*} \oplus H_{X_*}$ where $V_{X_*}$ represents infinitesimal variations due to conjugacy (tangent to the orbit, where $D^2\htop(X_*)$ might be zero) and $H_{X_*}$ represents dynamically distinct variations. The assumption (2) states that the Hessian $D^2\htop(X_*)$ is negative definite on $H_{X_*}$. Let $Y = Y_V + Y_H$ be the decomposition according to the splitting. Then $D^2\htop(X_*)[Y, Y] \approx D^2\htop(X_*)[Y_H, Y_H]$ (assuming Hessians involving $V_{X_*}$ vanish or are handled appropriately). If $\htop(X) = \htop(X_*)$, then $D^2\htop(X_*)[Y, Y] \approx 0$, which by negative definiteness on $H_{X_*}$ implies $Y_H \approx 0$. Thus, the perturbation $Y$ must predominantly lie in $V_{X_*}$, meaning $X = X_*+Y$ is approximately infinitesimally conjugate to $X_*$.
		
		\textbf{3. From Infinitesimal to Actual Conjugacy (using Slice Idea):} Assume a local slice $S$ through $X_*$ exists, parameterized by $H_{X_*}$. Any $X$ near $X_*$ can be written uniquely as $X = f_* (X_* + h)$ for some $f \in G_0$ near identity and $h \in H_{X_*}$ small. Since entropy is conjugacy invariant, $\htop(X) = \htop(X_* + h)$. The function $h \mapsto \htop(X_*+h)$ has a strict local maximum at $h=0$ by assumption (2). If $\htop(X) = \htop(X_*)$, then $\htop(X_*+h) = \htop(X_*)$. By the strict maximum property, this requires $h=0$. Therefore, $X = f_* X_*$ for some $f \in G_0$ close to the identity.
		
		\textbf{4. Regularity of Conjugacy:} Standard structural stability results for Anosov flows ensure that if $X$ is $C^r$-close to $X_*$, the conjugacy $f$ such that $X = f_*X_*$ is itself $C^r$ (possibly requiring slightly higher initial regularity $r$) and close to the identity.
	\end{proof}
	
	\begin{remark}
		This proposition suggests a strong form of rigidity for Anosov flows that uniquely maximize topological entropy (modulo conjugacy). The crucial assumption is the non-degeneracy of the maximum, typically related to the Hessian $D^2\htop(X_*)$ restricted to directions transverse to the conjugacy orbit. Verifying this non-degeneracy condition and the required splitting of the tangent space would involve significant analysis, possibly using linear response theory for second derivatives or detailed knowledge of the specific system $X_*$. If these conditions hold, the proposition implies that entropy maximizers are dynamically isolated points in the relevant moduli space.
	\end{remark}

\section{Topological and Finite Energy Anosov Flows}
\label{sec:topological_anosov_flows}

Building on the metric structure developed, particularly the $\dAn = \dAnV{\mathcal{C}^0}$ metric, we propose definitions for continuous flows that aim to retain key hyperbolic characteristics topologically, even if they lack smoothness. These definitions are motivated by the idea of controlled approximation by genuine smooth Anosov flows, potentially linking to results like Proposition \ref{prop:c0_rigidity} (consistency of limits under $\dAn$ convergence).

\begin{definition}[Topological Anosov Flow]
	\label{def:topological_anosov}
	A continuous flow (a continuous group action of $\R$) $\theta = (\theta_t)_{t\in\R} : M \to M$ on a compact manifold $M$ is called a \emph{topological Anosov flow} if there exists a sequence of flows $\phi_i = (\varphi^t_{X_i})_{t\in\R}$, generated by smooth Anosov vector fields \(X_i \in \An(M)\), such that:
	\begin{enumerate}[label=(\roman*)]
		\item \textbf{Uniform Flow Convergence:} For every \(T>0\),
		\[
		\sup_{t \in [-T,T]} d_{\mathcal{C}^0}\bigl(\varphi^t_{X_i}, \theta_t\bigr) \to 0 \quad \text{as } i \to \infty,
		\]
		where \(d_{\mathcal{C}^0}\) denotes the uniform (i.e., \(C^0\)) distance between maps on $M$.
		
		\item \textbf{Generator Sequence is Cauchy in $\dAn$ Metric:} The sequence of generators $\{X_i\}$ is a Cauchy sequence with respect to the $\dAn = \dAnV{\mathcal{C}^0}$ metric:
		\[
		\lim_{i,j \to \infty} \dAn(X_i, X_j) = 0.
		\]
	\end{enumerate}
	We denote by $\TopAnFlows(M)$ the set of all topological Anosov flows on $M$.
\end{definition}

\begin{definition}[Finite Energy Anosov Flow]
	\label{def:finite_anosov}
	A continuous flow (a continuous group action of $\R$) $\theta = (\theta_t)_{t\in\R} : M \to M$ on a compact manifold $M$ is called a \emph{finite energy Anosov flow} if there exists a sequence of flows $\phi_i = (\varphi^t_{X_i})_{t\in\R}$, generated by smooth Anosov vector fields \(X_i \in \An(M)\), such that:
	\begin{enumerate}[label=(\roman*)]
		\item \textbf{Uniform Flow Convergence:} For every \(T>0\),
		\[
		\sup_{t \in [-T,T]} d_{\mathcal{C}^0}\bigl(\varphi^t_{X_i}, \theta_t\bigr) \to 0 \quad \text{as } i \to \infty,
		\]
		where \(d_{\mathcal{C}^0}\) denotes the uniform (i.e., \(C^0\)) distance between maps on $M$.
		
		\item \textbf{Generator Sequence is Bounded in $\dAn$ Metric:} There exists a positive constant $C$ such that the sequence of generators $\{X_i\}$ is bounded with respect to the $\dAn = \dAnV{\mathcal{C}^0}$ metric relative to the zero vector field (or any fixed reference field):
		\[
		\dAn(X_i, 0) \leq C,
		\]
		for all $i$.
	\end{enumerate}
	We denote by $\FinEnAnFlows(M)$ the set of all finite energy Anosov flows on $M$.
\end{definition}

\begin{remark}
	Since a Cauchy sequence in a metric space is always bounded, it follows directly from the definitions that every topological Anosov flow is also a finite energy Anosov flow:
	\[ \TopAnFlows(M) \subseteq \FinEnAnFlows(M). \]
	Whether this inclusion is strict is an open question. The "finite energy" condition is weaker than the Cauchy condition.
\end{remark}

\subsection{Motivation and Features}
These definitions aim to capture the essence of Anosov dynamics in settings where smoothness might be lost, particularly for Definition \ref{def:topological_anosov}.
\begin{itemize}
	\item \textbf{Approximation by Smooth Systems:} The flow $\theta$ is defined as a limit of genuine smooth Anosov flows $\phi_i$.
	\item \textbf{Uniform Convergence (i):} Ensures that the orbit structure of $\phi_i$ closely approximates that of $\theta$ over any finite time horizon.
	\item \textbf{Metric Control on Generators (ii):}
	\begin{itemize}
		\item For \emph{topological Anosov flows}, the condition that $\{X_i\}$ is $\dAn$-Cauchy prevents the generators from varying wildly. Since $\normCzero{X_i - X_j} \le \dAn(X_i, X_j)$ (assuming this property holds for $\dAnV{C^0}$), this implies $\{X_i\}$ is also Cauchy in the $C^0$ norm. By completeness of $C^0(TM)$, $X_i$ converges in $C^0$ to some vector field $X \in C^0(TM)$. By continuous dependence of solutions to ODEs on the vector field (in the $C^0$ topology), the flow $\varphi^t_X$ generated by $X$ must be the limit flow $\theta$. Thus, a topological Anosov flow $\theta$ possesses a $C^0$ generator $X$ which is the $C^0$ limit of smooth Anosov generators $X_i$ forming a $\dAn$-Cauchy sequence. The $\dAn$-Cauchy condition is stronger than just $C^0$ convergence and imposes a constraint related to the "total deformation effort" among the approximants.
		\item For \emph{finite energy Anosov flows}, the generators are merely required to remain within a bounded "distance" from a reference point (e.g., the zero field) in the $\dAn$ metric. This still implies $C^0$ boundedness of the generators $X_i$. Under suitable conditions (e.g., Arzela-Ascoli if equicontinuity holds), this might allow extracting a $C^0$ converging subsequence, but the limit flow might not satisfy the stronger properties expected from the Cauchy condition.
	\end{itemize}
\end{itemize}

\subsection{Properties and Conjectures}
\label{subsec:top_an_props_conj}

The definition of topological Anosov flows (Definition \ref{def:topological_anosov}) raises several crucial questions about their stability, structure, and relationship to classical hyperbolic dynamics. Exploring these questions is essential to understanding the scope and utility of this concept. We outline some key conjectures and possible results, primarily focusing on the more structured class $\TopAnFlows(M)$.

\paragraph{1. Stability}
\begin{conjecture}[Closedness under Uniform Limits]
	\label{conj:closedness}
	The set $\TopAnFlows(M)$ of topological Anosov flows is closed under uniform convergence of flows on compact time intervals. That is, if $\{\theta_n\} \subset \TopAnFlows(M)$ is a sequence such that for every $T>0$, $\sup_{t \in [-T,T]} d_{C^0}(\theta_n(t, \cdot), \theta(t, \cdot)) \to 0$ for some continuous flow $\theta$, then $\theta \in \TopAnFlows(M)$.
\end{conjecture}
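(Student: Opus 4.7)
The plan is to execute a diagonal extraction producing, from the family of approximating sequences associated with the $\theta_n$'s, a single sequence in $\An(M)$ that witnesses $\theta \in \TopAnFlows(M)$. By hypothesis, for each $n$ there is a sequence $\{X_i^{(n)}\}_{i \in \mathbb{N}} \subset \An(M)$ such that $\varphi^t_{X_i^{(n)}}$ converges uniformly on compact time intervals to $\theta_n(t,\cdot)$, and $\{X_i^{(n)}\}_i$ is $\dAn$-Cauchy. The discussion after Definition~\ref{def:topological_anosov}, together with inequality \eqref{eq:norm_vs_dist_C0_prop_sec4} and the completeness of $\Czero(TM)$, provides a $\Czero$-limit $X^{(n)} \in \Czero(TM)$ which generates $\theta_n$. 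For each $n$ I would then choose an index $i_n$ satisfying simultaneously
\begin{enumerate}[label=(\alph*)]
	\item $\sup_{|t| \le n} d_{\Czero}\bigl(\varphi^t_{X_{i_n}^{(n)}}, \theta_n(t,\cdot)\bigr) < 1/n$,
	\item $\normCzero{X_{i_n}^{(n)} - X^{(n)}} < 1/n$,
	\item $\sup_{i,j \ge i_n} \dAn(X_i^{(n)}, X_j^{(n)}) < 1/n$,
\end{enumerate}
and set $Y_n := X_{i_n}^{(n)} \in \An(M)$.

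Uniform convergence $\varphi^t_{Y_n} \to \theta_t$ on any compact time window $[-T,T]$ is the routine part: for $n \ge T$, the triangle inequality
\[
d_{\Czero}\bigl(\varphi^t_{Y_n}, \theta_t\bigr) \le d_{\Czero}\bigl(\varphi^t_{Y_n}, \theta_n(t,\cdot)\bigr) + d_{\Czero}\bigl(\theta_n(t,\cdot), \theta_t\bigr),
\]
combined with condition (a) and the hypothesis $\theta_n \to \theta$ uniformly on compacts, drives both summands to zero uniformly in $t \in [-T,T]$ as $n \to \infty$. This establishes requirement (i) of Definition~\ref{def:topological_anosov} for the candidate sequence $\{Y_n\}$.

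The genuinely hard part, and the place I expect the proof to get stuck without additional hypothesis, is verifying that $\{Y_n\}$ is $\dAn$-Cauchy. Estimate (c) only controls $\dAn(X_i^{(n)}, X_j^{(n)})$ within a single inner sequence, so bridging between different $n$'s requires constructing admissible $\Cr{1}$-paths in $\An(M)$ joining $Y_m$ to $Y_k$ whose $\Czero$-velocities have small total integral. Conditions (a) and (b) only yield $\Czero$-proximity of the $Y_n$'s, and since $\An(M)$ fails to be open in the $\Czero$ topology (structural stability is a $\Cr{1}$ phenomenon), straight-line segments in $\Czero(TM)$ need not remain inside $\An(M)$, so the naive bound $\dAn(Y_m,Y_k) \le \normCzero{Y_m - Y_k}$ is not automatic. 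A workable resolution likely demands either a strengthening of Definition~\ref{def:topological_anosov} requiring the approximants to belong to a common $\Cr{1}$-precompact family (so that straight-line interpolation within $\An(M)$ is legitimate for sufficiently close pairs, yielding the desired inequality), or a concatenation scheme $Y_m \rightsquigarrow X_{j_m}^{(m)} \rightsquigarrow X_{j_k}^{(k)} \rightsquigarrow Y_k$ in which the outer legs are controlled by (c) and the central leg is bridged by a separate argument, e.g.\ interpolation through smoothed Anosov generators using a quantitative structural-stability estimate. Absent such additional input, the conjecture sits just outside the reach of the diagonal extraction alone, which is consistent with its status as a conjecture rather than a theorem in the paper.
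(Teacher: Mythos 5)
The statement you were asked about is labelled a conjecture in the paper and the paper offers no proof of it; it is posed as part of an open research program on $\TopAnFlows(M)$. So there is no argument of the author's to compare yours against, and your proposal, by your own admission, does not close the conjecture either: the diagonal extraction correctly produces a sequence $\{Y_n\} \subset \An(M)$ whose flows converge uniformly on compact time intervals to $\theta$ (your use of (a) together with the hypothesis $\theta_n \to \theta$ is fine and settles requirement (i) of Definition~\ref{def:topological_anosov}), but requirement (ii), the $\dAn$-Cauchy condition on $\{Y_n\}$, is left unverified, and you correctly identify this as the essential obstruction. Your diagnosis of why the naive bound fails is also on target: $\An(M)$ is open only in the $C^1$ topology, so a straight-line segment between two merely $C^0$-close Anosov fields need not stay in $\An(M)$, and hence $\dAn(Y_m, Y_k) \le \normCzero{Y_m - Y_k}$ is not available.

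One intermediate claim in your writeup is, however, too optimistic and worth flagging: you assert that conditions (a) and (b) "yield $\Czero$-proximity of the $Y_n$'s". They do not. Condition (b) compares $Y_n$ only with the generator $X^{(n)}$ of $\theta_n$, and nothing in the hypothesis controls $\normCzero{X^{(n)} - X^{(m)}}$: uniform convergence of the flows $\theta_n \to \theta$ on compact time intervals gives no control whatsoever on the time-derivatives at $t=0$, i.e.\ on the generators, so the $X^{(n)}$ need not converge, or even be Cauchy, in $\Czero(TM)$. Thus the gap is deeper than your summary suggests — even $\Czero$-Cauchyness of the diagonal sequence is not available, let alone the stronger path-length control demanded by $\dAn$. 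Your concluding assessment, that closing the argument would require extra hypotheses (a common $C^1$-precompact family of approximants, or a quantitative structural-stability bridge between the inner sequences), is a reasonable reading of why the statement remains a conjecture in the paper rather than a theorem.
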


\paragraph{2. Relation to Smooth Anosov Flows}
\begin{conjecture}[Characterization of Smoothness]
	\label{conj:smoothness_char}
	A topological Anosov flow $\theta \in \TopAnFlows(M)$ is topologically conjugate to a smooth Anosov flow generated by some $Y \in \An(M)$ if and only if its $C^0$-generator $X \in C^0(TM)$ (the $C^0$ limit of the approximating $X_i$) belongs to the closure of $\An(M)$ in the $\dAnV{C^1}$-metric topology (or a similarly strong topology related to smoothness).
\end{conjecture}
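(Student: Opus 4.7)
The plan is to split the biconditional into its two implications and to identify clearly the structural inputs each requires. The easy direction will rest on the completeness proved in Proposition~\ref{prop:completeness_Cr_Hk}, while the converse demands a non-trivial regularity upgrade from topological conjugacy to smoothness of the generator, which is where the real substance of the conjecture lies.

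For the direction $(\Leftarrow)$, suppose the $C^0$-generator $X$ is a $\dAnV{C^1}$-limit of a sequence $\{\tilde{X}_i\} \subset \An(M) \cap C^1(TM)$. Then $\{\tilde{X}_i\}$ is $\dAnV{C^1}$-Cauchy, hence $C^1$-Cauchy by the norm-versus-distance inequality \eqref{eq:norm_vs_dist_Ck_discuss_sec5}. Applying Proposition~\ref{prop:completeness_Cr_Hk}(1) with $r=1$ forces the $\dAnV{C^1}$-limit to lie in $\Ank{1}(M)$, and uniqueness of $C^0$-limits identifies this limit with $X$. Hence $X \in \An(M)$, and $\theta = \varphiup^t_X$ is itself a smooth Anosov flow, trivially topologically conjugate to itself with $Y = X$ and conjugacy equal to the identity.

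For the direction $(\Rightarrow)$, suppose $h \circ \theta_t = \varphiup^t_Y \circ h$ for some homeomorphism $h$ and some $Y \in \An(M)$. I would first invoke Anosov structural stability: since the smooth approximants $\phi_i = \varphiup_{X_i}$ converge uniformly to $\theta$ and $\theta$ is conjugate to $\varphiup_Y$, for large $i$ each $\phi_i$ admits a H\"older conjugacy (or orbit equivalence) $h_i : M \to M$ with $\varphiup_Y$ whose $C^0$-distance to $h$ tends to zero. The formal identity $X_i = (h_i^{-1})_* Y$ then suggests that the pairwise differences $X_i - X_j$ are controlled by $h_i - h_j$ in an appropriate norm. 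The goal is to convert this heuristic into a genuine $\dAnV{C^1}$-Cauchy estimate for $\{X_i\}$, after which the completeness argument of the $(\Leftarrow)$ direction places $X$ in $\An(M)$.

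The main obstacle is precisely the derivative control demanded in the previous step: the conjugacies supplied by structural stability are in general only H\"older, not $C^1$, so the formal identity $X_i = (h_i^{-1})_* Y$ cannot be differentiated to produce the required $C^1$-estimates on $X_i - X_j$. Two routes seem plausible for circumventing this. First, one might invoke or develop a rigidity theorem asserting that a flow in $\TopAnFlows(M)$ whose $C^0$-generator is topologically conjugate to a smooth Anosov flow must be $C^1$-conjugate to it, in the spirit of Livschitz-de la Llave regularity, perhaps after restricting to well-behaved subclasses such as contact or volume-preserving flows. Second, one might construct an explicit $C^1$-approximation by mollifying the $X_i$ inside $\An(M)$ using the reparameterization machinery of Lemma~\ref{lem:AnosovPathApprox} and Proposition~\ref{prop:AnosovL1LinfApprox} to keep the mollified paths inside the $C^1$-open set $\An(M)$, and then upgrading the $\dAn$-length control to $\dAnV{C^1}$-control by exploiting the smooth structure transported from $Y$. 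Either route is substantial, and the conjecture as stated in full generality may require additional hypotheses on the admissible class of topological Anosov flows before a complete proof becomes attainable.
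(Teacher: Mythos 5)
This statement is one of the paper's open conjectures (Conjecture~\ref{conj:smoothness_char}); the paper offers no proof of it, so there is no argument of the author's to compare yours against, and your proposal must stand on its own as a complete proof --- which it does not yet do. Your $(\Leftarrow)$ direction is sound only under a specific reading of the hypothesis: since $\dAnV{C^1}$ is defined only on $\An(M)$, you interpret ``$X$ lies in the closure of $\An(M)$ in the $\dAnV{C^1}$-metric topology'' as ``$X$ is the limit of a $\dAnV{C^1}$-Cauchy sequence in $\Ank{1}(M)$.'' Under that reading, Proposition~\ref{prop:completeness_Cr_Hk}(1) plus inequality \eqref{eq:norm_vs_dist_Ck_discuss_sec5} indeed places the limit in $\Ank{1}(M)$, so the closure collapses to $\An(M)$ itself and the hypothesis simply says $X$ is a $C^1$ Anosov field; the backward implication is then trivial. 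But notice what this costs you: the conjecture becomes the assertion that a flow in $\TopAnFlows(M)$ which is topologically conjugate to a smooth Anosov flow must have a generator that is itself $C^1$ Anosov --- a very strong rigidity claim (topological conjugacies are generally only H\"older, and conjugating a smooth Anosov flow by a non-smooth homeomorphism destroys regularity of the generator), and it is exactly this claim your $(\Rightarrow)$ direction would have to establish.

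That forward direction is where the genuine gap lies, and your own text concedes it. Concretely: (a) uniform convergence $\varphiup^t_{X_i} \to \theta_t$ on compact time intervals, together with a topological conjugacy $h$ between $\theta$ and $\varphiup^t_Y$, does not place the $X_i$ in the $C^1$-neighborhood of $Y$ where structural stability applies, so the existence of the maps $h_i$ is unjustified; (b) even where structural stability does apply, it yields H\"older orbit equivalences (allowing a time change), not conjugacies, so the identity $X_i = (h_i^{-1})_* Y$ is not available, and in any case a H\"older $h_i$ cannot be differentiated to produce the $C^1$ bounds on $X_i - X_j$, let alone a $\dAnV{C^1}$-Cauchy estimate, which by definition requires controlling the length of \emph{paths} inside $\An(M)$, not just endpoint norms; and (c) the two repair routes you sketch (a Livschitz/de~la~Llave-type bootstrapping of the conjugacy, or mollification of the $X_i$ via Lemma~\ref{lem:AnosovPathApprox} and Proposition~\ref{prop:AnosovL1LinfApprox}) are research programs rather than arguments --- neither is carried out, and the second faces the additional problem that mollifying inside the $C^1$-open set $\An(M)$ gives no mechanism for upgrading $\dAn$-control to $\dAnV{C^1}$-control. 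So what you have is a correct and honest reduction of the conjecture to a hard regularity/rigidity question, not a proof; given that the paper itself leaves the statement open, that is a reasonable contribution, but it should be presented as such.
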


\paragraph{3. Inherited Dynamical Properties}
\begin{conjecture}[Density of Periodic Orbits]
	\label{conj:periodic_orbits}
	Every topological Anosov flow $\theta \in \TopAnFlows(M)$ possesses a dense set of periodic orbits.
\end{conjecture}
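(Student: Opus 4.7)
The plan is to transfer density of periodic orbits from the smooth Anosov approximants $\phi_i$ to the limit flow $\theta$, using the uniform convergence on compact time intervals together with the $\dAn$-Cauchy control on generators. The key ingredients are the classical Anosov closing lemma for each $\phi_i$ (yielding dense periodic orbits in the non-wandering set, equal to $M$ in the transitive case that persists along path components of $\An(M)$), a compactness argument via Arzelà--Ascoli to extract a limiting periodic orbit, and uniform control of the hyperbolic data along the approximating sequence.

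Concretely, I would fix $x \in M$ and $\epsilon > 0$ and aim to produce a periodic orbit of $\theta$ within distance $\epsilon$ of $x$. For each large $i$, density of periodic orbits for the smooth Anosov flow $\phi_i$ yields a periodic point $p_i$ of $\phi_i$ with $d(p_i, x) < \epsilon/3$ and period $T_i > 0$. Assuming a subsequence $T_{i_k} \to T^* \in (0, \infty)$ can be extracted, the family of orbit segments $\{\varphi^t_{X_{i_k}}(p_{i_k})\}_{t \in [0, T^*+1]}$ is equicontinuous in $t$ (the generators $X_{i_k}$ are uniformly $C^0$-bounded by the Cauchy property, so the flows are uniformly Lipschitz in $t$), hence relatively compact by Arzelà--Ascoli. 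A further subsequence yields a continuous closed curve $\gamma: [0, T^*] \to M$ with $\gamma(0) = \gamma(T^*) = \lim p_{i_k} =: p$, and the uniform flow convergence (i) from Definition~\ref{def:topological_anosov} identifies $\gamma(t) = \theta_t(p)$, so $\theta_{T^*}(p) = p$, producing the desired periodic orbit of $\theta$ within $\epsilon$ of $x$.

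The main obstacle is \textbf{period control}: the Anosov closing lemma delivers periodic orbits whose minimum period needed to achieve $\epsilon/3$-density depends on the hyperbolicity constants $(C_i, \lambda_i)$ of $\phi_i$. Since the $\dAn$ metric controls only the $C^0$ norms of generators via inequality~\eqref{eq:norm_vs_dist_C0_prop_sec4}, while hyperbolicity is inherently a $C^1$ property, there is no a priori reason for $(C_i, \lambda_i)$ to remain uniformly bounded along a $\dAn$-Cauchy sequence; the periods $T_i$ could in principle blow up, invalidating the extraction of a finite limiting period. Bridging this gap is the central difficulty. Two possible routes are: (a) show that the $\dAn$-Cauchy condition combined with the uniform flow convergence (i) forces uniform hyperbolicity of the approximants, perhaps by interpreting the Cauchy property as a bound on total deformation effort that propagates to derivative data; or (b) argue directly that $\theta$ inherits a topological shadowing property from the uniformly shadowing flows $\phi_i$ on bounded time intervals, which together with recurrence inherited from the approximation would yield periodic orbits via Bowen's classical theorem. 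A strengthening of Definition~\ref{def:topological_anosov} imposing the Cauchy condition in $\dAnV{C^1}$ would, by inequality~\eqref{eq:norm_vs_dist_Ck_discuss_sec5}, immediately give $C^1$ convergence of generators and hence uniform Anosov constants, making the compactness argument above unconditional; settling the conjecture under the weaker $C^0$-based definition appears to require genuinely new input.
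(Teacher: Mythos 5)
First, note that the paper contains no proof of this statement: Conjecture~\ref{conj:periodic_orbits} is posed as an open problem in Section~\ref{subsec:top_an_props_conj}, part of a research program whose difficulty is precisely acknowledged in Conjecture~\ref{conj:dAn_implications} (that the $\dAn$-Cauchy condition should force extra structure on the limit is itself conjectural). So there is nothing to compare your argument against, and your proposal should be judged as a candidate proof on its own. As such, it is not a proof but a program, and the decisive gap is the one you yourself flag: Definition~\ref{def:topological_anosov} controls the approximating generators only through $\dAn=\dAnV{\Czero}$, which by \eqref{eq:norm_vs_dist_C0_prop_sec4} yields $C^0$ information about $X_i$ and nothing about $DX_i$. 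Consequently the hyperbolicity data $(C_i,\lambda_i)$ of the approximants may degenerate, the periods $T_i$ needed for $\epsilon$-density may escape to infinity (or, since nothing prevents the $C^0$-limit generator from nearly vanishing, collapse toward $0$), and no shadowing or closing property survives the limit. Your routes (a) and (b) are exactly the missing input, not a sketch of how to obtain it: there is no known mechanism by which an $L^1$-in-$s$, $C^0$-in-$x$ bound on deformation paths propagates to first-order (cocycle) data, and Bowen-type shadowing for $\theta$ is again a $C^1$-level conclusion that does not follow from uniform convergence of the time-$t$ maps alone.

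Two further points weaken even the conditional version of the argument. First, your opening step invokes density of periodic orbits of each $\phi_i$ in all of $M$; for smooth Anosov flows the closing lemma gives density only in the non-wandering set, and non-transitive Anosov flows exist, so ``transitivity persisting along the approximating sequence'' is an additional hypothesis not contained in Definition~\ref{def:topological_anosov} (indeed it is unclear the conjecture as stated is meant without such a caveat). Second, your closing remark is essentially right but should be stated carefully: if the Cauchy condition is imposed in $\dAnV{\Ck{1}}$, then by \eqref{eq:norm_vs_dist_Ck_discuss_sec5} and the completeness result (Proposition~\ref{prop:completeness_Cr_Hk}) the generators converge in $C^1$ to a genuine Anosov field whose flow is $\theta$ (cf.\ the uniqueness-of-limits argument in Proposition~\ref{prop:c0_rigidity}), so the conclusion reduces to the smooth case directly, with no Arzel\`a--Ascoli extraction needed --- but the transitivity caveat above still applies. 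Under the actual $C^0$-based definition, your own assessment is the correct one: the conjecture remains open and your proposal does not close it.
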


\begin{conjecture}[Positive Topological Entropy]
	\label{conj:positive_entropy}
	Every topological Anosov flow $\theta \in \TopAnFlows(M)$ has positive topological entropy, $\htop(\theta) > 0$.
\end{conjecture}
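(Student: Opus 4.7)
The plan is to transfer a quantitative lower bound on entropy from the smooth Anosov approximants $\{\varphi^t_{X_i}\}$ to the limit flow $\theta$ using the uniform flow convergence, and then to use the $\dAn$-Cauchy hypothesis on the generators to prevent these bounds from degenerating. First I would work with the Bowen--Dinaburg characterization of topological entropy via maximal $(T,\epsilon)$-separated sets. A direct triangle-inequality argument shows that if $\sup_{t \in [-T,T]} d_{C^0}(\varphi^t_{X_i}, \theta_t) < \epsilon/4$, then any $(T,\epsilon)$-separated set for $\varphi^t_{X_i}$ is automatically a $(T,\epsilon/2)$-separated set for $\theta$: if $d(\varphi^\tau_{X_i}(x), \varphi^\tau_{X_i}(y)) \ge \epsilon$ for some $\tau \in [0,T]$, then $d(\theta_\tau(x), \theta_\tau(y)) \ge \epsilon - 2(\epsilon/4) = \epsilon/2$. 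This yields the transfer inequality
\[
s(\theta, T, \epsilon/2) \;\ge\; s(\varphi^t_{X_i}, T, \epsilon),
\]
valid for all $i$ sufficiently large (depending on $T$ and $\epsilon$).

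The crux is then to produce a scale $\epsilon_0 > 0$, a rate $h_0 > 0$, and indices $i = i(T) \to \infty$ such that the uniform-convergence error on $[-T,T]$ stays below $\epsilon_0/4$ while $s(\varphi^t_{X_i}, T, \epsilon_0) \ge e^{h_0 T}$. I would obtain the right-hand side by invoking Katok's horseshoe construction for $C^{1+\alpha}$ Anosov flows: each $\varphi^t_{X_i}$ contains basic hyperbolic sets with topological entropy arbitrarily close to $\htop(\varphi^t_{X_i})$, and such horseshoes are $C^0$-robust provided their hyperbolic constants remain uniformly controlled. Combining the horseshoe estimate with the transfer inequality and a standard diagonal choice $i = i(T)$ would give $s(\theta, T, \epsilon_0/2) \ge e^{h_0 T}$, hence $\htop(\theta) \ge h_0 > 0$.

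The main obstacle, and the point where the $\dAn$-Cauchy condition must do real work, is ruling out the degeneration of the Anosov constants $(C_i, \lambda_i)$ along the sequence: a priori, a $C^0$-convergent sequence of smooth Anosov flows could have both the horseshoe scale and its exponential growth rate collapse to zero, since entropy is not lower semicontinuous for general $C^0$ perturbations. One strategy is to exploit the path structure underlying the $\dAn$-Cauchy condition: concatenating near-minimizing admissible paths yields $C^1$ families inside $\An(M)$ connecting a fixed reference $X_{i_0}$ to each $X_i$, along which a Katok horseshoe of $X_{i_0}$ continues by the implicit-function-theorem argument, propagating uniform hyperbolicity. A technically cleaner but strictly stronger route is to upgrade the hypothesis from $\dAn = \dAnV{\Ck{0}}$ to $\dAnV{\Ck{1}}$-Cauchy: Proposition~\ref{prop:completeness_Cr_Hk} then produces a genuine $C^1$-limit $X_\infty \in \Ank{1}(M)$ (still Anosov by openness), and Theorem~\ref{thm:entropy_continuity} immediately gives $\htop(\theta) = \htop(\varphi^t_{X_\infty}) > 0$. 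Establishing the conjecture at the genuinely $C^0$ level of $\dAn$ appears to require either a $C^0$-robust horseshoe persistence result, or a separate proof that the expansivity constants of the $\varphi^t_{X_i}$ admit a uniform lower bound along any $\dAn$-Cauchy sequence, and this is where I expect any full proof will meet its real difficulty.
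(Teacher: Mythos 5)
The statement you were asked to prove is posed in the paper only as Conjecture~\ref{conj:positive_entropy}; the paper offers no proof, so there is nothing to compare your argument against. Judged on its own terms, your proposal is not a proof, as you yourself acknowledge: the separated-set transfer under uniform flow convergence and the diagonal choice $i=i(T)$ are fine, but everything hinges on producing a scale $\epsilon_0>0$ and a rate $h_0>0$ uniform along the approximating sequence, i.e.\ on uniform hyperbolicity/expansivity constants, and the $\dAn$-Cauchy hypothesis does not supply them. In fact this gap cannot be closed, because the conjecture as literally stated is false. Take any $X_0\in\An(M)$ and set $X_i=\tfrac{1}{i}X_0$. Each $X_i$ is Anosov (positive rescaling preserves the splitting), the affine segments $s\mapsto\bigl((1-s)\tfrac{1}{j}+s\tfrac{1}{i}\bigr)X_0$ stay in $\An(M)$, so $\dAn(X_i,X_j)\le\bigl|\tfrac{1}{i}-\tfrac{1}{j}\bigr|\,\normCzero{X_0}$ and the generators are $\dAn$-Cauchy; moreover $\varphi^t_{X_i}=\varphi^{t/i}_{X_0}\to\mathrm{id}$ uniformly on compact time intervals. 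Hence the identity flow lies in $\TopAnFlows(M)$ as defined and has $\htop=0$. The degeneration you flagged (entropy and hyperbolic constants collapsing along a $\dAn$-Cauchy sequence) genuinely occurs, so any proof must fail exactly where you located the difficulty; the conjecture can only survive if Definition~\ref{def:topological_anosov} is strengthened, e.g.\ by a normalization ruling out time-rescaling collapse or by a uniform lower bound on the expansivity or entropy of the approximants.

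Two further cautions. Your ``cleaner route'' --- upgrading to a $\dAnV{\Ck{1}}$-Cauchy hypothesis and invoking Proposition~\ref{prop:completeness_Cr_Hk} together with Theorem~\ref{thm:entropy_continuity} --- is also unsound as written: the same sequence $X_i=\tfrac{1}{i}X_0$ is $\dAnV{\Ck{1}}$-Cauchy and converges in $C^1$ to the zero field, which is not Anosov, so the step ``the limit is still Anosov by openness'' is not available (openness of $\An(M)$ does not make it closed; this is in fact a weak point of the cited completeness proposition itself). Likewise, continuing a Katok horseshoe along concatenated near-minimizing paths gives no uniformity in $i$: every flow along such a path is Anosov, but nothing bounds its hyperbolicity constants away from degeneration as $i\to\infty$, precisely as the rescaling family shows.
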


\paragraph{4. Role of the $\dAn$-Cauchy Condition}
It is expected that the $\dAn$-Cauchy condition (ii) in Definition \ref{def:topological_anosov} is significantly stronger than merely requiring the generators $\{X_i\}$ to be $C^0$-Cauchy.
\begin{conjecture}[Structural Implications of $\dAn$-Cauchy]
	\label{conj:dAn_implications}
	The requirement that the approximating generators $\{X_i\}$ form a Cauchy sequence in the $\dAn = \dAnV{C^0}$ metric ensures that the limit flow $\theta$ inherits strong structural or stability properties (beyond those guaranteed by just $C^0$ generator convergence), such as certain shadowing properties, existence of continuous invariant foliations (or laminations), or the existence of physical/SRB-like invariant measures reflecting the hyperbolicity of the approximants.
\end{conjecture}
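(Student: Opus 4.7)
The plan is to address the three suggested structural properties—shadowing, continuous invariant laminations, and SRB-like measures—separately, each by extracting from the $\dAn$-Cauchy condition a form of \emph{uniform hyperbolicity} for the approximating family $\{X_i\}$ that is strictly stronger than what mere $C^0$-convergence of generators would yield. The guiding principle is that a short path in $(\An(M), \dAn)$ connecting $X_i$ to $X_j$ stays within $\An(M)$ throughout and has bounded total $C^0$-variation; along such a path one attempts to control how the Anosov data (cone fields, hyperbolicity constants) drift, so the approximants do not degenerate near the limit.

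First I would establish the uniformity step. Fix $X_N$ with hyperbolicity data $(C_N, \lambda_N, \mathcal{C}^{s/u}_N)$. For $i, j \ge N$ with $\dAn(X_i, X_j) < \epsilon$, choose an admissible path $X_s^{(i,j)} \subset \An(M)$ with $\int_0^1 \normCzero{\dot X_s}\,\diff s < 2\epsilon$. Using the openness of $\An(M)$ and a Gr\"onwall-type argument along the parameter $s$, I would seek to bound the drift of cone widths and expansion/contraction rates for the intermediate flows $\varphi^t_{X_s^{(i,j)}}$ by a quantity controlled by the path length alone. Chaining this along successive approximants and invoking Proposition~\ref{prop:c0_rigidity} to pin down the limit, this would yield uniform hyperbolicity estimates $(C, \lambda)$ valid for every $X_i$ with $i \ge N$.

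Second, with uniform hyperbolicity in hand, I would pass the structural objects to the limit. For shadowing, uniform Anosov shadowing constants combined with the uniform flow convergence $\phi_i \to \theta$ give that $\theta$-pseudo-orbits are approximated by $\phi_i$-pseudo-orbits, which are then shadowed by true $\phi_i$-orbits; a diagonal compactness argument extracts a limiting $\theta$-orbit shadowing the original pseudo-orbit. For invariant laminations, the H\"older stable/unstable foliations $\mathcal{F}^{s/u}_{X_i}$ inherit uniform H\"older constants, so by Arzel\`a--Ascoli on graphs of local leaves over transversals they accumulate, in the Hausdorff topology on compact sets, to continuous $\theta$-invariant laminations. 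For physical measures, weak$^*$ compactness of $\ProbMeas(M)$ guarantees that accumulation points of the SRB measures $\mu_i$ are $\theta$-invariant; establishing absolute continuity of the limit along unstable leaves would require uniform Jacobian and distortion estimates transferred through the $\dAn$-path.

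The principal obstacle is the first step. The $\dAn$-metric is built from $C^0$-norms, whereas classical hyperbolic theory (cone-field perturbation, persistence of foliations, SRB construction) typically demands $C^1$-control on the generators. Pure $C^0$-closeness does not directly imply proximity of derivatives or invariant splittings, so it is conceivable that the $\dAn$-Cauchy condition alone is insufficient to yield Conjecture~\ref{conj:dAn_implications} as stated. A likely resolution is either to exhibit a bootstrap showing that remaining inside the open set $\An(M)$ along a short $\dAn$-path secretly forces effective $C^1$-control on the derivative cocycle (perhaps via compactness of cone families on the compact manifold $M$), or, failing that, to strengthen the hypothesis to $\dAnV{\Ck{1}}$-Cauchy, in which case Theorem~\ref{thm:entropy_continuity}, Proposition~\ref{prop:lyapunov_stability}, and Theorem~\ref{thm:srb_pressure_continuity} deliver the structural inheritance through direct $C^1$-approximation of the limit generator.
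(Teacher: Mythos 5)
The statement you are addressing is stated in the paper as a \emph{conjecture}: the paper offers no proof of it, and your proposal does not close the gap either, as you yourself partly acknowledge. The decisive problem is your first step. The length functional defining $\dAn = \dAnV{C^0}$ only controls $\int_0^1 \normCzero{\dot X_s}\,\diff s$, i.e.\ a $C^0$-size of the velocity of the path; it gives no information whatsoever about $DX_s$, hence none about the derivative cocycle $\diff\varphiup^t_{X_s}$, which is the object governing cone fields, expansion/contraction rates, shadowing constants, H\"older constants of the invariant foliations, and unstable Jacobians. A Gr\"onwall argument ``along the parameter $s$'' therefore has nothing to propagate: the quantity you can bound (the $C^0$ drift of the vector fields) is not the quantity whose drift you need (hyperbolicity data of the flows). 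Membership of each $X_s$ in the open set $\An(M)$ gives, for each fixed $s$, \emph{some} constants $(C_s,\lambda_s)$, but nothing in the $\dAn$-Cauchy hypothesis prevents these from degenerating along the sequence, nor does it prevent $\normCk{X_i}{1}$ from blowing up; a $\dAn$-Cauchy sequence is only $C^0$-Cauchy (inequality \eqref{eq:norm_vs_dist_C0_prop_sec4}), and the paper itself emphasizes (Remark after Proposition~\ref{prop:c0_rigidity}, and the completeness discussion) that $C^0$ control does not preserve the Anosov property or its quantitative data. Your suggested bootstrap via ``compactness of cone families'' is circular: extracting a convergent subsequence of splittings or laminations by Arzel\`a--Ascoli requires exactly the uniform (H\"older/angle) bounds that uniform hyperbolicity would supply, which is what you are trying to establish.

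Your fallback --- strengthening the hypothesis to $\dAnV{\Ck{1}}$- (or $\Ck{2}$-) Cauchy and invoking Theorem~\ref{thm:entropy_continuity}, Proposition~\ref{prop:lyapunov_stability}, and Theorem~\ref{thm:srb_pressure_continuity} --- is reasonable mathematics but proves a different statement: those results apply when the limit generator lies in $\An(M)$ with $C^1$ or better convergence, whereas the whole point of Conjecture~\ref{conj:dAn_implications} is to extract structure from the weaker $C^0$-based Cauchy condition, where the limit flow need not be smooth or Anosov at all. So the proposal should be read as a research plan identifying the right intermediate target (uniform hyperbolicity of the approximants) rather than as a proof; the key missing idea --- a mechanism by which bounded total $C^0$-deformation cost inside $\An(M)$ forces uniform control of the derivative cocycle, or a counterexample showing it does not --- is precisely what the conjecture leaves open.
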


\paragraph{5. Geometric Properties of the Space}
Defining a suitable topology on $\TopAnFlows(M)$ itself is a prerequisite for studying its geometric properties. A natural candidate topology might involve simultaneous uniform convergence of flows and a condition related to the $\dAn$-convergence of generators (perhaps defining a metric on $\TopAnFlows(M)$ based on the $\dAn$ distance between approximating sequences).
\begin{conjecture}[Completeness]
	\label{conj:completeness_topan}
	The space $\TopAnFlows(M)$, endowed with a suitable topology reflecting both uniform flow convergence and the $\dAn$-Cauchy condition on generators, forms a complete metric space (or is complete in a relevant sense).
\end{conjecture}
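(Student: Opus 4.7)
The plan is to equip $\TopAnFlows(M)$ with a metric $D$ extending $\dAn$, and to prove completeness by a diagonal extraction that simultaneously controls the two defining conditions of Definition~\ref{def:topological_anosov}. For $\theta, \theta' \in \TopAnFlows(M)$, I would set
\[
D(\theta, \theta') := \inf \left\{ \liminf_{i \to \infty} \dAn(X_i, X'_i) \; \middle| \; \{X_i\}, \{X'_i\} \text{ are approximating sequences for } \theta, \theta' \right\},
\]
with ``approximating sequence'' as in Definition~\ref{def:topological_anosov}. The metric axioms are routine modulo one point: non-degeneracy follows because the unique $C^0$-generator $X$ of $\theta$ (extracted via $\normCzero{X_i - X_j} \le \dAn(X_i, X_j)$ and completeness of $C^0(TM)$, as noted in Section~\ref{sec:topological_anosov_flows}) satisfies $\normCzero{X - X'} \le D(\theta, \theta')$, and distinct continuous flows generated by $C^0$ vector fields (arising as $C^0$ limits of smooth Anosov fields) have distinct generators by uniqueness of solutions.

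For completeness, I would take a $D$-Cauchy sequence $\{\theta_n\} \subset \TopAnFlows(M)$, and for each $n$ fix an approximating sequence $\{X_{n,i}\}_{i}$ satisfying uniform flow convergence $\varphiup^t_{X_{n,i}} \to \theta_n^t$ on compact time intervals and the $\dAn$-Cauchy condition in $i$. The core step is a diagonal extraction of indices $i(n) \to \infty$ producing $Z_n := X_{n,i(n)} \in \An(M)$ such that (a) $\dAn(Z_n, Z_{n+1}) < 2^{-n}$, combining the $D$-Cauchy hypothesis between $\theta_n$ and $\theta_{n+1}$ with the internal $\dAn$-Cauchy structure of each sequence $\{X_{n,i}\}_i$, and (b) $\sup_{t \in [-n,n]} d_{C^0}(\varphiup^t_{Z_n}, \theta_n^t) < 2^{-n}$, using uniform flow approximation. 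Then $\{Z_n\}$ is $\dAn$-Cauchy, hence $C^0$-Cauchy in generators, so its flows converge uniformly on compacts to a continuous flow $\theta$; by construction $\theta \in \TopAnFlows(M)$, and a triangle-inequality chase using (a), (b), and the consistency statement of Proposition~\ref{prop:c0_rigidity} yields $D(\theta_n, \theta) \to 0$.

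The principal obstacle is the feasibility of this simultaneous diagonal extraction. The infimum in the definition of $D$ is over \emph{pairs} of approximating sequences, so the witnesses realizing $D(\theta_n, \theta_{n+1}) \to 0$ may vary with $n$ in a manner incompatible with a single diagonal choice across all $n$; one cannot naively assume that the same $\{X_{n,i}\}_i$ that witnesses the $D$-distance to $\theta_{n-1}$ also witnesses it to $\theta_{n+1}$. I would attempt to resolve this by first replacing each $\{X_{n,i}\}_i$ with a carefully pruned subsequence chosen so that it approximately realizes the $D$-infimum against all sufficiently close neighbors in the sequence, then diagonalizing. A cleaner alternative, which I would pursue if the direct argument proves too rigid, is to identify $\TopAnFlows(M)$ with a quotient of the abstract metric completion $\widehat{\An(M)}$ of $(\An(M), \dAn)$ by the equivalence relation ``induces the same continuous limit flow''; completeness then descends from standard completion theory, and the remaining task reduces to showing that this equivalence relation is closed in $\widehat{\An(M)} \times \widehat{\An(M)}$ and that the quotient pseudometric is Hausdorff, both of which again reduce to the $C^0$-generator injectivity argument used for non-degeneracy.
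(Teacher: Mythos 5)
The statement you are addressing is left as an open conjecture in the paper: no proof (and indeed no precise choice of topology on $\TopAnFlows(M)$) is given there, so your proposal must stand on its own, and as it stands it has gaps that go beyond the one you flag. First, your candidate $D$ is not shown to be a metric, and the obstruction is structural: the triangle inequality would require that two \emph{different} approximating sequences for the same flow $\theta'$ be asymptotically $\dAn$-close to each other, but the definition only forces both to converge in $C^0$ to the common generator $X'$, and $\dAn$ dominates the $C^0$ norm rather than being dominated by it, so $\dAn$-closeness of distinct approximating sequences is not available. This is exactly the same difficulty that blocks your diagonal extraction in step (a) (the witnesses for $D(\theta_n,\theta_{n+1})$ and $D(\theta_{n+1},\theta_{n+2})$ need not be compatible along $\theta_{n+1}$), and the proposed ``pruning'' does not address it, because no amount of subsequence selection creates $\dAn$-closeness between sequences that are only $C^0$-close.

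Second, your non-degeneracy argument and your final step $D(\theta_n,\theta)\to 0$ both invoke uniqueness of the flow generated by the $C^0$-limit field $X$, but for merely continuous vector fields uniqueness of solutions fails in general; Proposition~\ref{prop:c0_rigidity} is of no help here because it assumes the limit generator is Anosov (hence Lipschitz). Concretely, two sequences of smooth Anosov fields converging in $C^0$ to the same non-Lipschitz $X$ can a priori have flows converging to different limit flows, so $\normCzero{X-X'}\le D(\theta,\theta')$ does not rule out $D(\theta,\theta')=0$ with $\theta\neq\theta'$, nor does Gronwall-type continuous dependence apply uniformly when the Lipschitz constants of the $X_{n,i}$ blow up. Third, the fallback via the abstract completion $\widehat{\An(M)}$ has two problems: not every $\dAn$-Cauchy sequence in $\An(M)$ produces a limit flow (uniform flow convergence is an independent hypothesis in Definition~\ref{def:topological_anosov} precisely because the $C^0$-limit generator need not determine a flow), so the map from $\widehat{\An(M)}$ to flows is not defined on all of it; and completeness does not ``descend from standard completion theory'' to a quotient pseudometric space --- quotients of complete metric spaces are not complete in general, so this step needs an actual argument (e.g.\ closedness of classes plus a lifting property for Cauchy sequences), which is again obstructed by the same generator-nonuniqueness issue. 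In short, the proposal is a reasonable program, but both routes reduce to two unresolved core difficulties: controlling $\dAn$-distances between different approximating sequences of the same flow, and recovering the flow from a $C^0$ generator.
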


\paragraph{6. Connections to Other Hyperbolic Definitions}
A fundamental question is how Definition \ref{def:topological_anosov} relates to classical intrinsic definitions of topological hyperbolicity, which are often based on the existence of continuous invariant splittings with exponential behaviour or local product structure.
\begin{conjecture}[Equivalence with Classical Definitions]
	\label{conj:equivalence}
	A continuous flow $\theta$ is a topological Anosov flow according to Definition \ref{def:topological_anosov} if and only if it satisfies standard criteria for topological hyperbolicity, such as possessing a continuous invariant splitting $TM = E^s_\theta \oplus E^u_\theta \oplus E^c_\theta$ (where $E^c_\theta$ is the one-dimensional bundle tangent to the flow orbits) such that for some continuous Riemannian metric, vectors in $E^s_\theta$ are uniformly contracted exponentially fast by the forward flow $\theta_t$ ($t>0$) and vectors in $E^u_\theta$ are uniformly contracted exponentially fast by the backward flow $\theta_{-t}$ ($t>0$).
\end{conjecture}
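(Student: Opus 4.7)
The plan is to prove both directions of the equivalence separately: first, that Definition \ref{def:topological_anosov} implies the existence of a classical continuous invariant hyperbolic splitting, and second, that any continuous flow admitting such a splitting can be realized as a $\dAn$-Cauchy limit of smooth Anosov flows. The overall strategy mirrors known persistence arguments in hyperbolic theory, adapted to the Hofer-like metric framework.

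For the forward direction (Definition \ref{def:topological_anosov} $\Rightarrow$ classical splitting), let $\{X_i\} \subset \An(M)$ be the approximating sequence. Using Proposition \ref{prop:c0_rigidity} together with the inequality $\normCzero{X_i - X_j} \le \dAn(X_i, X_j)$, I would first conclude that $X_i \to X$ in $C^0(TM)$ where $X$ generates $\theta$. Each $X_i$ carries a continuous invariant splitting $TM = E^s_{X_i} \oplus E^u_{X_i} \oplus \R X_i$ with hyperbolicity constants $(C_i, \lambda_i)$. The first key step is to establish uniformity: $C_i \le C_*$ and $\lambda_i \ge \lambda_* > 0$. I would attempt to extract this from the $\dAn$-Cauchy condition by showing that the entire sequence eventually lies along a single admissible path of bounded length in $\An(M)$, on which the hyperbolicity constants vary semi-continuously and remain bounded. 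Given such uniform constants, I would then use an invariant cone field argument: construct stable and unstable cones $\mathcal{K}^{s/u}_i$ at each point, extract pointwise Hausdorff limits $\mathcal{K}^{s/u}_\theta$ on the Grassmannian bundle via an Arzel\`a-Ascoli compactness argument, and recover the continuous splitting $E^s_\theta \oplus E^u_\theta$ by standard graph transform techniques, with the exponential rates inherited from the uniform approximant constants.

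For the reverse direction (classical splitting $\Rightarrow$ Definition \ref{def:topological_anosov}), suppose $\theta$ has generator $X \in C^0(TM)$ together with a continuous invariant hyperbolic splitting. I would construct approximants $X_i$ by convolving $X$ with mollifiers $\rho_{\epsilon_i}$ in local charts, summed via a smooth partition of unity on $M$. Standard mollification ensures $X_i \to X$ in $C^0$. The nontrivial step is proving $X_i \in \An(M)$ for large $i$, which requires that the smoothings inherit approximate cone-invariance from the continuous splitting of $X$, with uniform contraction rates that survive regularization. Once $X_i \in \An(M)$ for all large $i$, the $\dAn$-Cauchy property would follow by straight-line paths: for $i, j$ sufficiently large, $\|X_i - X_j\|_{C^0}$ is small, and by openness of $\An(M)$, the segment $s \mapsto (1-s)X_i + sX_j$ lies entirely in $\An(M)$, yielding $\dAn(X_i, X_j) \le \|X_i - X_j\|_{C^0} \to 0$. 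Uniform flow convergence then follows from $C^0$ convergence of generators via standard continuous dependence of ODE solutions.

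The hard part will be establishing uniformity of hyperbolicity constants in the forward direction. The $\dAn$ metric only controls the $C^0$-norm of velocity fields along paths, whereas the classical Anosov constants depend sensitively on the derivative cocycle $\diff\varphiup_{X_i}^t$ and hence on $C^1$ data. Without a complementary bound on the $C^1$-variation of generators, the constants $(C_i, \lambda_i)$ could in principle degenerate (e.g.\ $\lambda_i \to 0$ or $C_i \to \infty$) even when the generators converge in $C^0$. One strategy is to strengthen the hypothesis, requiring Cauchy-ness in $\dAnV{\Ck{1}}$ rather than $\dAn = \dAnV{\Ck{0}}$, which preserves derivative data through inequality \eqref{eq:norm_vs_dist_Ck_discuss_sec5}. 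A secondary difficulty lies in the reverse direction: mollifications of a merely $C^0$-generator need not be Anosov unless the continuous splitting enjoys some H\"older regularity, and transferring cone invariance through smoothing requires delicate estimates. Taken together, these obstacles suggest that Conjecture \ref{conj:equivalence} as stated may require additional regularity hypotheses on the splitting, or replacement of $\dAn$ by a stronger Hofer-like metric such as $\dAnV{\Ck{1}}$, to admit a fully rigorous proof.
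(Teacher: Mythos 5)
First, note that the paper offers no proof of this statement: it is Conjecture \ref{conj:equivalence}, explicitly left open as part of a research program, so there is no argument of the author's to compare yours against. Your text is accordingly best read as a proof \emph{program}, and as you yourself concede in the final paragraph, it does not close; the two central steps fail as stated. In the forward direction, the assertion that the $\dAn$-Cauchy condition forces uniform hyperbolicity constants $(C_*,\lambda_*)$ is unsupported: $\dAn=\dAnV{\Ck{0}}$ only bounds the integrated $C^0$-size of the velocity fields along connecting paths, and gives no control whatsoever on the derivative cocycles $\diff\varphiup^t_{X_i}$. Hyperbolicity constants are not semi-continuous along $C^0$-bounded or $C^0$-short paths in $\An(M)$ (openness of $\An(M)$ and stability of the splitting are $C^1$ phenomena), so $\lambda_i\to 0$ or $C_i\to\infty$ is entirely compatible with your hypotheses; the sentence ``the hyperbolicity constants vary semi-continuously and remain bounded'' is precisely the missing theorem, not a consequence of the setup. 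Moreover, even granting uniform constants, the limit generator $X$ is merely $C^0$, so $\theta$ need not be differentiable and the classical criterion you aim for --- a $d\theta_t$-invariant splitting of $TM$ with exponential contraction of tangent vectors --- is not even well-defined for the limit; your cone-field/Arzel\`a--Ascoli limit produces candidate distributions but there is no flow derivative left to contract them, so the conclusion would have to be reformulated in purely topological terms (expansivity, shadowing, local product structure), which is a different statement.

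In the reverse direction there are two independent gaps. Mollifications of a $C^0$ generator need not be Anosov: ``approximate cone-invariance surviving regularization'' requires quantitative control of the derivatives of the mollified flows, and for a merely continuous $X$ the $C^1$ data of $X_i=\rho_{\epsilon_i}\ast X$ blows up in general; this is the entire difficulty, not a step one can defer to ``delicate estimates.'' Separately, your $\dAn$-Cauchy argument via straight-line segments misuses openness: $\An(M)$ is open in the $C^r$ topology for $r\ge 1$, not in $C^0$, so smallness of $\normCzero{X_i-X_j}$ does not keep the segment $(1-s)X_i+sX_j$ inside $\An(M)$ --- the difference $X_i-X_j$ can be $C^0$-small but $C^1$-huge (and for mollifications of a non-$C^1$ field it typically is), so the bound $\dAn(X_i,X_j)\le\normCzero{X_i-X_j}$ is not justified. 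Your closing diagnosis --- that the conjecture likely needs extra regularity of the splitting or replacement of $\dAn$ by $\dAnV{\Ck{1}}$ --- is a reasonable assessment of why the statement is open, but it is a comment on the conjecture, not a proof of it.
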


These conjectures outline a research program aimed at validating Definition \ref{def:topological_anosov} and understanding its place within the broader theory of hyperbolic dynamics. Similar questions could be posed for the larger class $\FinEnAnFlows(M)$, likely with different answers.
	
	\section{Conclusion and Future Directions}\label{sec:conclusion}
	
	In this work, we introduced a family of Hofer-like metrics $\dAnV{V}$ on the space of Anosov vector fields $\An(M)$, motivated by the Hofer metric in symplectic geometry. We established their basic properties, including the crucial inequality $\|\cdot\|_V \le \dAnV{V}$, invariance properties under diffeomorphisms, and completeness when using $C^r (r \ge 1)$ or suitable Sobolev $H^k$ norms for $V$.
	
	The core contribution lies in demonstrating the dynamical significance of these metrics. We showed that proximity in $\dAnV{V}$ (for appropriate $V$, typically $C^1$ or stronger) implies quantitative control over key dynamical invariants. Specifically, we established continuity results for topological entropy, Lyapunov exponents, SRB measures, pressure, and zeta functions. Furthermore, under sufficient regularity ($C^2$ or $H^k$ with $k$ large), we demonstrated local Lipschitz continuity for pressure, spectral gap, and SRB dimension, and Fréchet differentiability for pressure, Lyapunov exponents, and the spectral gap, with derivatives given by linear response formulas. These results highlight that the $\dAnV{V}$ metrics provide a dynamically relevant geometric structure on $\An(M)$. The $H^k$ metric variant offers the advantage of a locally flat geometry with straight-line geodesics. We also explored the implications for the moduli space of Anosov flows, showing stability of invariants and outlining the slice theorem framework for analyzing its local structure.
	
	Finally, we proposed a definition for \emph{Topological Anosov Flows} based on simultaneous uniform convergence of flows and $\dAn$-Cauchy convergence of the generating $C^0$ vector fields. This definition offers a possibly new way to extend hyperbolic concepts to non-smooth settings, leveraging the metric geometry of $\An(M)$.
	
	\subsection*{Future Research Directions}
	Several promising avenues for future research emerge:
	
	\begin{itemize}
		\item \textbf{Global Geometry and Geodesics:} Investigate the existence and properties of globally minimizing geodesics for $\dAnV{V}$. Are path components always connected by minimizing paths? What is the diameter of $(\An(M), \dAnV{V})$? (Finite diameter seems unlikely for $\dAn$). Does the $H^k$ metric admit global geodesics?
		\item \textbf{Curvature Analogues:} Explore the second variation of path length or the Hessian of dynamical invariants (like pressure or spectral gap, see Theorem \ref{thm:spectral_gap_diff}, Proposition \ref{prop:hessian_spectral_gap}, Proposition \ref{prop:pressure_regularity_geodesic}) as notions of curvature on $(\An(M), \dAnV{V})$. Does negative curvature relate to divergence of nearby deformation paths?
		\item \textbf{Completeness of $\dAn$:} Is the space $(\An(M), \dAn)$ complete? Proposition \ref{prop:completeness_Cr_Hk} showed completeness for $\dAnV{C^r}$ ($r\ge 1$) and $\dAnV{H^k}$. The case $V=C^0$ is more subtle as $C^0$ limits of Anosov flows need not be Anosov.
		\item \textbf{Distance to Boundary:} Can $\dAnV{V}$ be used to quantify the distance from an Anosov flow $X$ to the boundary $\partial \An(M)$ within $C^r(TM)$? Does $\inf_{Y \notin \An(M)} \dAnV{V}(X, Y)$ relate to stability margins?
		\item \textbf{Isometries and Symmetries:} Characterize the isometry group of $(\An(M), \dAnV{V})$. Are there non-trivial isometries beyond reparameterizations or diffeomorphisms acting via pushforward (Proposition \ref{prop:diffeo_isometry})?
		\item \textbf{Topological Anosov Flows (Definition \ref{def:topological_anosov}):}
		\begin{itemize}
			\item Explore the properties of flows satisfying this definition. Do they exhibit shadowing, expansivity, or possess canonical invariant measures? (Address Conjectures \ref{conj:periodic_orbits}, \ref{conj:positive_entropy}, \ref{conj:dAn_implications})
			\item How does this definition compare to other existing definitions of topological hyperbolicity? (Address Conjecture \ref{conj:equivalence})
			\item Can concrete examples be constructed, especially non-smooth ones?
			\item Investigate the stability and completeness of the space $\TopAnFlowsExplicit(M)$. (Address Conjectures \ref{conj:closedness}, \ref{conj:completeness_topan})
			\item Explore the relationship between smoothness of the flow and properties of the generator sequence (Address Conjecture \ref{conj:smoothness_char}).
		\end{itemize}
		\item \textbf{Connections to Other Geometries:} Relate $\dAnV{V}$ to other geometric structures on spaces of dynamical systems, such as Optimal Transport-based metrics or metrics arising from thermodynamic formalism.
		\item \textbf{Extensions:} Can similar metrics be defined and studied for partially hyperbolic systems, or for discrete-time Anosov diffeomorphisms?
	\end{itemize}
	
	These directions highlight thepossible for the Hofer-like metrics $\dAnV{V}$ to serve as a valuable tool in the geometric analysis of dynamical systems, offering new perspectives on stability, classification, and the boundary between regular and chaotic behavior.

	\begin{center}
		\section*{Acknowledgements}
	\end{center}
I would like to express me deep gratitude to Professor Joel Tossa for his constant support and encouragement during the course of my research. 
	

\end{document}